\DeclareFontFamily{U}{mathx}{\hyphenchar\font45}
\DeclareFontShape{U}{mathx}{m}{n}{
      <5> <6> <7> <8> <9> <10>
      <10.95> <12> <14.4> <17.28> <20.74> <24.88>
      mathx10
      }{}
\DeclareSymbolFont{mathx}{U}{mathx}{m}{n}
\DeclareMathAccent{\widecheck}{0}{mathx}{"71}
\DeclareMathAccent{\wideparen}{0}{mathx}{"75}
\newcommand*\rel@kern[1]{\kern#1\dimexpr\macc@kerna}
\newcommand*\widebar[1]{%
  \begingroup
  \def\mathaccent##1##2{%
    \rel@kern{0.8}%
    \overline{\rel@kern{-0.8}\macc@nucleus\rel@kern{0.2}}%
    \rel@kern{-0.2}%
  }%
  \macc@depth\@ne
  \let\math@bgroup\@empty \let\math@egroup\macc@set@skewchar
  \mathsurround\z@ \frozen@everymath{\mathgroup\macc@group\relax}%
  \macc@set@skewchar\relax
  \let\mathaccentV\macc@nested@a
  \macc@nested@a\relax111{#1}%
  \endgroup
}
\newcommand{\N}{\mathbb{N}}
\newcommand{\R}{\mathbb{R}}
\newcommand{\Pcal}{\mathcal{P}}
\newcommand{\Bcal}{\mathcal{B}}
\newcommand{\Hcal}{\mathcal{H}}
\newcommand{\Ocal}{\mathcal{O}}
\newcommand{\loc}{\rm{loc}}
\renewcommand{\C}[2]{
  \ifthenelse{#1=0 \and #2=0}{\textsf{\upshape C}}
  {\ifthenelse{#2=0}{\textsf{\upshape C}^{#1}}
    {\textsf{\upshape C}^{#1,#2}}}
}
\renewcommand{\d}{\mathrm{d}}
\newcommand{\ddp}{\mathrm{dp}}
\newcommand{\q}{\mathrm{qc}}
\newcommand{\E}{\textsf{\upshape E}}
\renewcommand{\P}{\textsf{\upshape P}}
\newcommand{\Qu}{\textsf{\upshape Q}}
\newcommand{\indicator}[1]{\mathbf{1}_{#1}}
\newcommand{\filt}[1]{\mathfrak{#1}}
\newcommand{\sigalg}[1]{\mathscr{#1}}
\renewcommand{\S}{\ensuremath{\mathscr{S}}}
\newcommand{\V}{\ensuremath{\mathscr{V}}}
\newcommand{\J}{\ensuremath{\mathscr{J}}}
\newcommand{\lc}{[\![}
\newcommand{\rc}{]\!]}
\DeclarePairedDelimiterX\br[1]{[}{]}{#1}
\DeclarePairedDelimiterX\set[2]\{\}{#1\::\:#2}
\let\oldr@@t\r@@t
\def\r@@t#1#2{%
  \setbox0=\hbox{$\oldr@@t#1{#2\,}$}\dimen0=\ht0
  \advance\dimen0-0.2\ht0
  \setbox2=\hbox{\vrule height\ht0 depth -\dimen0}%
  {\box0\lower0.4pt\box2}}
\LetLtxMacro{\oldsqrt}{\sqrt}
\renewcommand*{\sqrt}[2][\ ]{\oldsqrt[#1]{#2}}
\theoremstyle{plain}
\newtheorem{theorem}{Theorem}
\newtheorem{lemma}[theorem]{Lemma}
\newtheorem{proposition}[theorem]{Proposition}
\newtheorem{corollary}[theorem]{Corollary}
\theoremstyle{definition}
\newtheorem{definition}[theorem]{Definition}
\newtheorem{example}[theorem]{Example}
\theoremstyle{remark}
\newtheorem{remark}[theorem]{Remark}
\numberwithin{theorem}{section}
\numberwithin{equation}{section}
\numberwithin{figure}{section}
\numberwithin{table}{section}
\begin{document}
\title{Pure-Jump Semimartingales}

\author{Ale\v{s} \v{C}ern\'{y} \and Johannes Ruf}

\address{Ale\v{s} \v{C}ern\'{y}\\
  Business School (formerly Cass)\\
	City, University of London}

\email{ales.cerny.1@city.ac.uk}

\address{Johannes Ruf\\
  Department of Mathematics\\
  London School of Economics and Political Science}

\email{j.ruf@lse.ac.uk}

\subjclass[2010]{Primary: 60H05; 60G07; 60G48; 60G51; 60H05}

\keywords{Jump measure; L\'evy process; predictable compensator; semimartingale topology; stochastic calculus}

\date{\today}

\begin{abstract} 
A new integral with respect to an integer-valued random measure is introduced. In contrast to the finite variation integral  ubiquitous in semimartingale theory (Jacod and Shiryaev \citep[II.1.5]{js.03}), the new integral is closed under stochastic integration, composition, and smooth trans\-for\-ma\-tions. The new integral gives rise to a previously unstudied class of pure-jump processes --- the sigma-locally finite variation pure-jump processes. As an application, it is shown that every semimartingale $X$ has a unique decomposition 
$$X = X_0 + X^{\q}+X^{\ddp},$$ 
where $X^{\q}$ is quasi-left-continuous and $X^\ddp$ is a sigma-locally finite variation pure-jump process that jumps only at predictable times, both starting at zero. The decomposition mirrors the classical result for local martingales (Yoeurp \cite[Theoreme~1.4]{yoeurp.76}) and gives a rigorous meaning to the notions of continuous-time and discrete-time components of a semimartingale.   Against this backdrop, the paper investigates a wider class of processes that are equal to the sum of their jumps in the semimartingale topology and constructs a taxonomic hierarchy of pure-jump semimartingales. 
\end{abstract}

\maketitle

\section{Introduction}
Denote by $\V^{\d}$ the set of finite variation pure-jump semimartingales, i.e., those $X$ that are equal to their initial value plus the absolutely convergent sum of their jumps. Equivalently, $X\in\V^{\d}$ if 
$$X=X_0+x*\mu^X,$$ 
where $\mu^X$ is the jump measure of $X$ and $x*\mu^X$ represents the standard jump measure integral (Jacod and Shiryaev \cite[II.1.5]{js.03}). It is known (see \cite[II.1.28]{js.03}) that the predictable compensator of $X\in\V^{\d}$ equals $x*\nu^X$ whenever such compensator exists. Here $\nu^X$ denotes the predictable compensator of the jump measure $\mu^X$.

Consider now the $\R$--valued stochastic process $X$ defined by the following properties: 
\begin{equation}\label{eq:Ex}
	\begin{rcases}
		&\text{$X_0=0$;}\\
		&\text{$X$ has independent increments;}\\
		&\text{jumps of $X$ occur only at fixed times $\tau_n = 2-\sfrac{1}{n}$, for each $n\in\N$;}\quad\\
		&\text{the process jumps by $\pm\sfrac{1}{n}$ with equal probability, for each $n\in\N$;} \\
		&\text{$X$ remains constant outside the fixed jump times.} 
	\end{rcases}
\end{equation}
This process is a well-defined semimartingale, in fact a square-integrable martingale, on the whole time line $[0,\infty]$.  
Yet $X$ is not equal to the sum of its jumps in the conventional sense because the jumps of $X$ are not absolutely summable. In particular, the standard integral $x*\mu^X$ diverges. Furthermore, the integral $x*\nu^X$ also diverges even though the predictable compensator (drift) of $X$ exists and is equal to zero.

The seminal work of Kallsen~\cite{kallsen.04} offers two important clues how to perform  symbolic drift calculation for the process in \eqref{eq:Ex}. The first key idea, \cite[Definition~4.1]{kallsen.04}, is to consider each jump time separately and ask only that the expectation $\E[|\Delta X_{\tau_n}|]$ is finite (for each $n\in\N$). One may then be able to legitimately sum up the individual contributions $\E[\Delta X_{\tau_n}]$ over $n\in\N$ even if the sum of $\E[|\Delta X_{\tau_n}|]$ does not converge, which is precisely the case for \eqref{eq:Ex}. The second key idea, \cite[Lemma~4.1]{kallsen.04}, is non-trivial mathematically: it identifies the natural procedure above with the sigma-localization of the absolutely convergent integral $x*\nu^X$ to obtain a better integral that we shall denote $x\star\nu^X$. It is important to add that Kallsen \cite{kallsen.04} makes everything work in a general continuous-time setting that goes well beyond \eqref{eq:Ex}.

Acting on these clues, we propose to view the process \eqref{eq:Ex} in two novel, complementary ways, both of which involve an approximation by elements in $\V^{\d}$. The first approach regards $X$ in \eqref{eq:Ex} as an element of $\V^{\d}_\sigma$, i.e., as a process that belongs sigma-locally to $\V^{\d}$. Proposition~\ref{P:190729.2} shows that each $X\in\V^{\d}_\sigma$ satisfies the convenient formula 
$$X = X_0 + x\star\mu^X,$$ 
where $\star$ is the sigma-localized version of the standard jump measure integral~$*$. Furthermore, the drift of $X$, provided it exists, is given by $x\star \nu^X$ (Corollary~\ref{C:drift}). 

The new jump integral $\star$, unlike $*$, is associative so that for a predictable process $\zeta$ and a predictable function $\eta$ one has 
$$\zeta \cdot (\eta \star \mu^X) = (\zeta \eta) \star \mu^X$$ 
if the left-hand side is well defined (Proposition~\ref{P:190411}). The associativity of the stochastic integral is in fact a special case of a stronger composition property 
$$ \psi\star(\eta\star\mu^X) = \psi(\eta)\star\mu^X,$$
which holds as soon as one of the two expressions is meaningful (Proposition~\ref{P:190528.1}). The associativity of integral then follows by taking for $\psi$ the linear function $\psi(x) = \zeta x$. Together with closedness under $\mathcal{C}^2$ transformations (Proposition~\ref{P:190528.2}) these properties give rise to a very pleasant stochastic calculus for processes in $\V^{\d}_\sigma$. For further developments in this direction, see \v{C}ern\'y and Ruf \cite{crII}.

The second approach views $X$ as a sum of its jumps at a sequence of stopping times with convergence in the \'Emery semimartingale topology. Here it is in principle possible to encounter two different processes that share the same jump measure (by choosing different exhausting sequence of stopping times in each case). However, we show that such a situation cannot occur in $\V^{\d}_\sigma$ (Theorem~\ref{T:1}). This offers a wider sense in which all processes in $\V^{\d}_\sigma$ are uniquely determined by their jump measure. In contrast, the family of quadratic pure-jump processes (Protter \cite[p.~63]{protter.90}; see also Definition~\ref{D.1}) lacks such uniqueness property as adding a  continuous deterministic finite variation process to $X$ yields a different quadratic pure-jump process with the same jump measure. Thus the terminology ``quadratic pure-jump'' is unfortunate --- in the context of this paper, most of these processes are not pure-jump processes at all. In fact, it is shown that a special quadratic pure-jump semimartingale is in $\V^{\d}_\sigma$ if and only if its drift is given by $x\star\nu^X$ (Corollary~\ref{C:1}).

The process~\eqref{eq:Ex} has one property that not all processes in $\V^{\d}_\sigma$ share: its jumps are exhausted by a sequence of \emph{predictable} (in fact fixed) times. Such processes enjoy further regularity in that \emph{any} sequence of predictable times exhausting their jumps delivers convergence of the sum in the \'Emery topology (Proposition~\ref{P:190828.2}). As an application, it is shown that every semimartingale $X$ has a unique decomposition 
$$X = X_0 + X^{\q}+X^{\ddp},$$ 
where $X^{\q}$ is quasi-left-continuous and $X^\ddp$ is an element of $\V^{\d}_\sigma$ that jumps only at predictable times, both starting at zero (Proposition~\ref{P:190729}). The decomposition mirrors the classical result for local martingales (Yoeurp \cite[Theoreme~1.4]{yoeurp.76}) and gives a rigorous meaning to the notions of continuous-time and discrete-time components of a semimartingale. The decomposition is helpful when computing drifts in various applications; for further details we refer the reader to \v{C}ern\'y and Ruf \cite[Section~6]{crII}.

Here now the outline of this paper. After notation (Section~\ref{S:2}), we develop the new integral (Section~\ref{S:3}) and then proceed with the classification of the pure-jump processes (Section~\ref{S:4}). Section~\ref{S:5} has proofs pertaining to Section~\ref{S:4}. Appendix~\ref{S:Emery} contains facts about the \'Emery topology and Appendix~\ref{S:6} discusses the consequences of weakening the topology to the one given by uniform convergence in probability.

\section{Notation and setup} \label{S:2}

We fix a probability space $(\Omega ,\sigalg{F},\P)$ with a right-continuous filtration $\filt{F} = (\sigalg{F}_t)_{t \geq 0}$. 
 Recall from \cite[II.1.4]{js.03} that a function $\eta: \Omega \times [0,\infty) \times \R \rightarrow \R$ is called predictable if it is $\Pcal \times \Bcal(\R)$--measurable, where $\Pcal$ denotes the predictable sigma field and $\Bcal(\R)$ the Borel sigma field on $\R$. 
If  $\psi: \Omega \times [0,\infty) \times \R \rightarrow \R$ 
denotes another (predictable) function we shall write $\psi(\eta)$ to denote the (predictable) function $(\omega, t, x) \mapsto \psi(\omega, t, \eta(\omega, t, x))$.

We shall consider  an integer-valued random measure $\mu$ on $[0,\infty) \times \R$ with predictable compensator $\nu$.  A predictable function $\eta$ with $\eta(0) = 0$ is integrable with respect to $\mu$, i.e., $\eta * \mu$ exists if $|\eta| * \mu < \infty$. Recall from \cite[II.2.9]{js.03} that $\nu$ can be written in disintegrated form as
\begin{align} \label{eq:190528}
	\nu(\d t, \d x) = F_t(\d x) \d A_t, \qquad t \geq 0,\, x \in \R,
\end{align}
where $A$ is a nondecreasing predictable 
process, and $F$ is a transition kernel from $(\Omega \times [0,\infty), \Pcal)$ into $(\R, \Bcal(\R))$.   If we want to emphasize the probability measure under which $\nu$ is the predictable compensator of $\mu$ we shall write $\nu(\P)$.  

We let $\S$ denote the space of $\R$--valued semimartingales. 
For a semimartingale $X \in \S$, we let $X_-$ denote its left-limit process with the convention $X_{0-} = X_0$ and we let $\Delta X = X - X_-$ denote its jump process. 
 Next, we let $\mu^X$ denote the jump measure of $X$ and $\nu^X$ its predictable compensator.  For a predictable function $\eta$ with $\eta(0) = 0$ we then have $\eta * \mu^X = \sum_{0<t \leq \cdot} \eta(\Delta X_t)$ if $|\eta| * \mu^X < \infty$. The corresponding quantities in \eqref{eq:190528} shall be written with a superscript $X$.
 If $X$ is special, we write $B^X$ for its drift, i.e., $B^X$ is the predictable finite variation process with $B^X_0 = 0$ such that $X-B^X$ is a local martingale. If $Y$ denotes another semimartingale then $[X,Y]$ denotes the quadratic covariation of $X$ and $Y$. Moreover, we write $X[1] = X - x\indicator{|x| > 1} * \mu^X$ and note that $X[1]$ is special. Next, $L(X)$ denotes the family of $X$--integrable predictable processes.
 
If $\J \subset \S$ is a family of semimartingales, we say that a semimartingale $Y$ belongs to $\J_\sigma$, the 
sigma--localized class of $\J$, 
if there is a sequence $(D_n)_{n \in \N}$ of predictable sets increasing to 
$\Omega\times[0,\infty)$  such that $\indicator{D_n} \cdot Y \in \J$ for each $n\in\N$. We say that $\J$ is stable under sigma-stopping 
(see Kallsen \cite[Definition~2.1]{kallsen.04}) if for every $X\in\J$ and every predictable set $D$ the process $\indicator{D}\cdot X$ belongs to $\J$. Finally, we shall say that $\Qu$ is a probability measure that is locally absolutely continuous with respect to $\P$ if $\Qu$ is absolutely continuous with respect to $\P$ on $\sigalg{F}_t$ for each $t \geq 0$.

\begin{remark}
	Throughout this paper, we only consider the scalar case, which helps in reducing notation. The careful reader can convince themselves that quite a few results (in particular those of Section~\ref{S:3}) generalize to the higher-dimensional case, for example when $X$ takes values in $\R^d$ and the predictable functions below  map into $\R^n$, etc., for some $d,n \in \N$. A notable exception is statement~\ref{T:1.ii} in Theorem~\ref{T:1}, where we do not know whether the one-dimensional situation generalizes. Indeed, we do not know whether $\J^2$ is a vector space --- the lack of such structure would seem to imply that such a result does not hold in higher dimensions.
\qed
\end{remark}

\section{Extended integral with respect to a random measure}  \label{S:3}

We start by extending the standard definition of integral with respect to a random measure and derive some basic properties in Subsection~\ref{SS:3.1}. Then, in Subsection~\ref{SS:3.2}, we prove some  associativity properties of this integral. In Subsection~\ref{SS:3.3} we connect the integral to the representation of sigma-locally finite variation pure-jump processes. In particular, this will enable us to write the process $X$ of the introduction as $X=x\star\mu^X$ and its drift under any locally absolutely continuous measure $\Qu$ that makes $X$ special as $B^X(\Qu) = x\star\nu^X(\Qu)$.

\subsection{Definition and basic properties of the extended integral}  \label{SS:3.1}

\begin{definition}[Extended integral with respect to random measure] \label{D:190405}\mbox{}
\begin{enumerate}[label={\rm(\roman{*})}, ref={\rm(\roman{*})}] 
\item \label{D:190405:i}   Denote by $L(\mu)$ the set of predictable functions that are absolutely integrable with respect to $\mu$. We say that a predictable function $\eta$ belongs to $L_\sigma(\mu)$, the sigma--localized class of $L(\mu)$, 
if there is a sequence $(D_n)_{n \in \N}$ of predictable sets increasing to $\Omega\times[0,\infty)$ and a semimartingale $Y$ such that $\indicator{D_n}\eta\in L(\mu)$ for each $n\in\N$ and 
$$ (\indicator{D_n}\eta)*\mu = \indicator{D_n}\cdot Y,\qquad n\in\N.$$
In such case the semimartingale $Y$ is denoted by $\eta \star \mu$.
\item \label{D:190405:ii} $L(\nu)$, $L_\sigma(\nu)$, and $\eta\star\nu$ are defined analogously; 
see Definition~4.1 and Lemma~4.1 in Kallsen \cite{kallsen.04}.
\qed
\end{enumerate}
\end{definition}

Note that if $\mu=\nu$ is a predictable random measure then the two definitions above agree; hence $L_\sigma(\mu)$ and $L_\sigma(\nu)$ are well-defined and we have $L_\sigma(\mu) = L_\sigma(\nu)$. Note also that $\eta \star \mu$ (resp., $\eta \star \nu$) is uniquely defined provided that $\eta \in L_\sigma(\mu)$ (resp., $\eta \in L_\sigma(\nu)$). 

\begin{remark} \label{R:190405}
	Let $\Qu$ denote a probability measure locally absolutely continuous  with respect to $\P$. With the obvious notation, we then have  $L_\sigma^\P(\mu) \subset L_\sigma^\Qu(\mu)$. For $L_\sigma^\P(\nu(\P))$ and $L_\sigma^\Qu(\nu(\Qu))$, no such inclusions hold in general. However, refer also to the positive statement in Remark~\ref{R:190507}.	
\qed
\end{remark}

The following characterization of $L_\sigma(\nu)$ appears in the literature. 

\begin{lemma}[Kallsen {\cite[Definition~4.1 and Lemma~4.1]{kallsen.04}}]  \label{L:Kallsen}
For a predictable function $\eta$ the following statements are equivalent.
\begin{enumerate}[label={\rm(\roman{*})}, ref={\rm(\roman{*})}] 
\item $\eta\in L_\sigma(\nu)$.
\item The following two conditions hold.
	\begin{enumerate}
		\item $\int|\eta_t(x)| F_t(\d x) < \infty \quad (\P \times \d A)$--a.e. 
		\item $\int \left| \int \eta_t(x) F_t(\d x) \right|  \d A_t < \infty$.
	\end{enumerate}
\end{enumerate}
Moreover, for $\eta\in L_{\sigma}(\nu)$ one has 
\[ 
\eta \star \nu =\int_0^\cdot \left(\int \eta_t( x) F_t(\d x) \right) \d A_t.
\]
\end{lemma}

To the best of our knowledge, the class $L_\sigma(\mu)$ has not been studied previously. 
The following characterization therefore seems to be new. 
\begin{proposition}\label{P:190330}
 For a predictable function $\eta$ the following statements are equivalent.

\begin{enumerate}[label={\rm(\roman{*})}, ref={\rm(\roman{*})}]
\item\label{P:190330.i} $\eta\in L_\sigma(\mu)$.
\item\label{P:190330.ii} The following two conditions hold.
		\begin{enumerate}
				\item $\eta^2 * \mu < \infty$.
				\item $\eta \indicator{\{|\eta| \leq 1\}} \in L_\sigma(\nu)$. 
			\end{enumerate}
\end{enumerate}
Furthermore, for $\eta \in L_{\sigma}(\mu)$ one has 
\begin{equation} \label{eq:190405.1}
\eta \star \mu =\eta\indicator{\{|\eta|> 1\}}*\mu+\eta\indicator{\{|\eta|\leq 1\}}*(\mu-\nu)+\eta\indicator{\{|\eta|\leq 1\}}\star\nu,
\end{equation}
where the integral with respect to the compensated measure $\mu-\nu$ is defined in \cite[II.1.27(b)]{js.03}.
\end{proposition}

\begin{remark}\label{R:190507}
	In the setup of Remark~\ref{R:190405}, choose a predictable function $\eta$ with $|\eta|^2 * \mu < \infty$. Proposition~\ref{P:190330} now yields that if $\eta \indicator{\{|\eta| \leq 1\}} \in L_\sigma^\P(\nu(\P))$ then also $\eta \indicator{\{|\eta| \leq 1\}} \in L_\sigma^\Qu(\nu(\Qu))$.
\qed
\end{remark}

\begin{proof}[Proof of Proposition~\ref{P:190330}] In the following we argue both inclusions and \eqref{eq:190405.1}.

\ref{P:190330.i}$\Rightarrow$\ref{P:190330.ii}:  Let $(D_n)_{n \in \N}$ be as in Definition~\ref{D:190405}\ref{D:190405:i}. Then 
$\indicator{D_n} |\eta|^2 *\mu=  \indicator{D_n} \cdot [\eta \star \mu,\eta \star \mu]$ for all $ n \in \N$,
and a monotone convergence argument yields $|\eta|^2 *\mu = [\eta \star \mu,\eta \star \mu] < \infty$. Let us now set 
$\overline \eta = \eta  \indicator{\{|\eta| \leq 1\}}$. Then $\overline \eta \in L_\sigma(\mu)$ and we directly get
\[
	\int_0^\cdot  \indicator{D_n}(t) \left(\int|\overline \eta_t(x)| F_t(\d x)\right) \d A_t = \indicator{D_n} |\overline \eta|*\nu < \infty.
\]
Thanks to Lemma~\ref{L:Kallsen} we now only need to argue that $\int \left| \int \overline \eta_t(x) F_t(\d x) \right|  \d A_t < \infty$. 
We 	note that $|\Delta (\overline \eta \star \mu)| \leq 1$, hence $\overline \eta \star \mu$ is special, say with predictable finite variation drift $\overline B$. By monotone convergence, we now get 
\begin{align*}
		\int_0^\cdot \left| \int \overline \eta_t(x) F_t(\d x)\right| \d A_t  
		&= \lim_{n \uparrow \infty} \int_0^\cdot \indicator{D_n}(t) \left| \int \overline \eta_t(x) F_t(\d x)\right| \d A_t 
		=  \lim_{n \uparrow \infty} \int_0^\cdot \indicator{D_n}(t)  |\d \overline B_t|  \\
		&=  \int_0^\cdot  |\d \overline B_t| < \infty.
\end{align*}
This yields $\overline \eta \in L_\sigma(\nu)$, hence the implication \ref{P:190330.i}$\Rightarrow$\ref{P:190330.ii} is shown.
	
\ref{P:190330.ii}$\Rightarrow$\ref{P:190330.i} and \eqref{eq:190405.1}: Let $(D_n)_{n \in \N}$ be as in 
Definition~\ref{D:190405}\ref{D:190405:ii}.	Note that all terms on the right-hand side of \eqref{eq:190405.1} are well defined and yield a semimartingale $Y$ provided that \ref{P:190330.ii} holds. Thanks to the uniqueness of $\eta \star \mu$ we only need to observe that 
$\indicator{D_n} \cdot Y =  (\indicator{D_n} \eta) * \mu$ for all $n \in \N$. However, this is straightforward, which concludes the proof of the proposition.
\end{proof}
	
\begin{remark}
	Note that $L_{\loc}(\mu) = L(\mu)$, that is $L(\mu)$ is closed under standard localization. However, we have $L_{\sigma}(\mu) \supsetneq L(\mu)$ on sufficiently large probability spaces; see Example~\ref{ex:190729}.
\qed
\end{remark}

\begin{example} \label{ex:190729}
	Let $\mu$ denote a jump measure with  $\nu(\d t, \d x) = \sfrac{1}{2}  \indicator{|x| \leq 1} \d x (\sum_{i = 1}^\infty \delta_{\sfrac{1}{i}} (\d t))$, where $\delta_{\sfrac{1}{i}}$ denotes the Dirac measure at $\sfrac{1}{i}$. Then for the decomposition in \eqref{eq:190528} we may choose $A = \sum_{i = 1}^\infty \sfrac{1}{i^2} \indicator{\lc \sfrac{1}{i}, \infty\lc}$ and $F_t(\d x) = \sfrac{1}{(2 t^2)} \indicator{|x| \leq 1} \d x$ for all $t > 0$ and $x \in \R$.

Consider next the predictable function $\eta$ given by $\eta_t(x) = t x$ for all $t \geq 0$ and $x \in \R$. Then $\eta \in L_\sigma(\nu)$ by Lemma~\ref{L:Kallsen} and $\eta^2 * \mu_\infty \leq \sum_{i = 1}^\infty \sfrac{1}{i^2} < \infty$.  Hence by Proposition~\ref{P:190330}, we have $\eta \in L_\sigma(\mu)$. Indeed, $\eta \star \mu$ is a semimartingale that jumps at times $\sfrac{1}{i}$ and is constant on the intervals $[\sfrac{1}{(i+1)}, \sfrac{1}{i})$,  for each $i \in \N$. Moreover, 	 
	 $\Delta(\eta \star \mu)_{\sfrac{1}{i}} = \sfrac{U_i}{i}$, where $(U_i)_{i \in \N}$ is a sequence of independent $[-1,1]$-uniforms.
 Since $|\eta| * \mu_1 = \sum_{i = 1}^\infty \sfrac{|U_i|}{i} = \infty$ (by Kolmogorov's convergence criteria), we have $\eta \in L_\sigma(\mu) \setminus L(\mu)$.
	
Consider now the predictable function $\bar \eta_t(x) = x$  for all $t \geq 0$ and $x \in \R$.  Then again $\bar \eta \in L_\sigma(\nu)$, but now $\bar\eta^2 * \mu_1 = \infty$.  This yields an example for a predictable function 
		$\bar\eta \in L_\sigma(\nu) \setminus L_\sigma(\mu)$.
	\qed
\end{example}

\subsection{Associativity properties of the extended integral}	\label{SS:3.2}

We remind the reader that $\mu$ without a superscript refers to a given integer-valued random measure, while $\mu^X$ refers to the jump measure of a semimartingale $X$; see Section~\ref{S:2}. 
\begin{proposition}\label{P:190528.1}
	Let $\eta \in L_\sigma(\mu)$ and $\psi: \Omega \times [0,\infty) \times \R \rightarrow \R$ be a predictable function. Then the following statements are equivalent.
\begin{enumerate}[label={\rm(\roman{*})}, ref={\rm(\roman{*})}]
\item\label{P:190528.i} $\psi\in L_\sigma(\mu^{\eta\star\mu})$.
\item\label{P:190528.ii} $\psi ( \eta) \in L_\sigma(\mu)$.
\end{enumerate}
Furthermore, if either condition holds then $\psi \star (\eta \star \mu) = \psi(\eta) \star \mu$. 
Moreover, the same assertions hold with $\mu$ replaced by $\nu$.
\end{proposition}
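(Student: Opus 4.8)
The plan is to reduce the whole statement to a single structural fact: the jump measure $\mu^{\eta\star\mu}$ is exactly the image of $\mu$ under the predictable map $(t,x)\mapsto(t,\eta_t(x))$. As usual for jump-measure integrands we take $\psi(0)=0$, and note that $\eta\in L_\sigma(\mu)$ forces $\eta(0)=0$, so $\psi(\eta)(0)=\psi(0)=0$ too. To establish the structural fact, choose predictable sets $(D_n)_{n\in\N}$ increasing to $\Omega\times[0,\infty)$ as in Definition~\ref{D:190405}\ref{D:190405:i}, so that $\indicator{D_n}\eta\in L(\mu)$ and $(\indicator{D_n}\eta)*\mu=\indicator{D_n}\cdot(\eta\star\mu)$. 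The left-hand side lies in $\V^d$ and has jump $\indicator{D_n}(s)\,\eta_s(\beta_s)$ at each time $s$ at which $\mu$ carries an atom $\beta_s$ (i.e.\ $\mu(\{s\}\times\cdot)=\delta_{\beta_s}$), and no jump elsewhere. Since every $(\omega,s)$ eventually lies in $D_n$ and $\Delta(\indicator{D_n}\cdot(\eta\star\mu))_s=\indicator{D_n}(s)\,\Delta(\eta\star\mu)_s$, letting $n\to\infty$ gives $\Delta(\eta\star\mu)_s=\eta_s(\beta_s)$ at the atom times of $\mu$ and $=0$ elsewhere. Consequently, for every predictable $\psi$ with $\psi(0)=0$,
\[
	|\psi|*\mu^{\eta\star\mu}=|\psi(\eta)|*\mu
\]
as (possibly infinite) processes, and whenever this common value is finite, also $\psi*\mu^{\eta\star\mu}=\psi(\eta)*\mu$; in particular $\psi\in L(\mu^{\eta\star\mu})$ if and only if $\psi(\eta)\in L(\mu)$, and then the two $*$-integrals agree.

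The passage to the sigma-localized classes is then bookkeeping. Assume one of \ref{P:190528.i}, \ref{P:190528.ii} holds, with a localizing sequence of predictable sets. Intersecting that sequence with $(D_n)$ and using $(\indicator{E}W)*\lambda=\indicator{E}\cdot(W*\lambda)$ for $\lambda\in\{\mu,\mu^{\eta\star\mu}\}$, we may assume one and the same sequence, again written $(D_n)$, localizes $\eta$ for $\mu$ and simultaneously localizes $\psi$ for $\mu^{\eta\star\mu}$ (resp. $\psi(\eta)$ for $\mu$). Because $\indicator{D_n}$ does not depend on $x$ we have $(\indicator{D_n}\psi)(\eta)=\indicator{D_n}\psi(\eta)$, so applying the previous paragraph to the integrand $\indicator{D_n}\psi$ yields $|\indicator{D_n}\psi|*\mu^{\eta\star\mu}=|\indicator{D_n}\psi(\eta)|*\mu<\infty$ together with
\[
	(\indicator{D_n}\psi)*\mu^{\eta\star\mu}=(\indicator{D_n}\psi(\eta))*\mu=\indicator{D_n}\cdot Y,
\]
where $Y$ denotes whichever of $\psi\star\mu^{\eta\star\mu}$, $\psi(\eta)\star\mu$ is assumed to exist. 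This simultaneously supplies the membership that was not assumed and shows $\psi\star(\eta\star\mu)=Y=\psi(\eta)\star\mu$, which completes \ref{P:190528.i}$\Leftrightarrow$\ref{P:190528.ii} and the identity.

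For the $\nu$-version, $\eta\star\nu$ is the predictable process of Lemma~\ref{L:Kallsen} and $\nu^{\eta\star\nu}$ is, in the same spirit, the image of $\nu$ under $(t,x)\mapsto(t,\eta_t(x))$, which disintegrates as $(F_t\circ\eta_t^{-1})(\d x)\,\d A_t$. The argument here is even shorter: under the substitution $\int\phi(y)\,(F_t\circ\eta_t^{-1})(\d y)=\int\phi(\eta_t(x))\,F_t(\d x)$, the two conditions of Lemma~\ref{L:Kallsen} characterizing $\psi\in L_\sigma(\nu^{\eta\star\nu})$ turn into the two conditions characterizing $\psi(\eta)\in L_\sigma(\nu)$, and the integral formula of that lemma becomes $\psi\star\nu^{\eta\star\nu}=\int_0^\cdot\bigl(\int\psi_t(\eta_t(x))\,F_t(\d x)\bigr)\,\d A_t=\psi(\eta)\star\nu$.

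The main obstacle is the structural fact of the first paragraph — correctly identifying $\Delta(\eta\star\mu)$, hence that $\mu^{\eta\star\mu}$ is the pushforward of $\mu$ along $\eta$. The remainder (the compatibility and repeated intersection of localizing sequences, and the $\psi(0)=0$ bookkeeping) I expect to be routine.
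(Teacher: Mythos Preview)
Your argument is correct, and the structural fact you isolate --- that $\mu^{\eta\star\mu}$ is the pushforward of $\mu$ along $(t,x)\mapsto(t,\eta_t(x))$ --- is exactly the same observation the paper relies on. For the $\nu$-version your proof and the paper's are essentially identical: both reduce to the change-of-variable $\int\phi\,\d(F_t\circ\eta_t^{-1})=\int\phi(\eta_t)\,\d F_t$ and then invoke Lemma~\ref{L:Kallsen}.

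The one genuine difference is in how the $\mu$-version is handled. The paper does not carry out your direct sigma-localization bookkeeping; instead it applies Proposition~\ref{P:190330} on each side, which characterizes $\psi\in L_\sigma(\mu^{\eta\star\mu})$ by the pair of conditions $|\psi|^2*\mu^{\eta\star\mu}<\infty$ and $\psi\indicator{|\psi|\le 1}\in L_\sigma(\nu^{\eta\star\mu})$, and then uses the pushforward together with the already-proved $\nu$-case to match these against the corresponding conditions for $\psi(\eta)\in L_\sigma(\mu)$. Your route is more elementary in that it avoids Proposition~\ref{P:190330} entirely and works straight from Definition~\ref{D:190405}; the paper's route is shorter on the page because the analytic characterizations absorb the localizing-sequence manipulations you spell out. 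Both yield the identity $\psi\star(\eta\star\mu)=\psi(\eta)\star\mu$ for the same reason.
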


\begin{proof}
	Let us first prove the statement with $\mu$ replaced by $\nu$.  To this end, note that 
\begin{align*}
	\nu^{\eta \star \nu} (\d t, \d x) =  \overline F_t (\d x) \d A_t, \qquad t \geq 0,\, x \in \R,  
\end{align*}
where $\overline F_t$ is the image of measure $F_t$ under $\eta_t$. Then the equivalence follows from Lemma~\ref{L:Kallsen}. The statement for $\mu$ follows exactly in the same manner, now using Proposition~\ref{P:190330}.
\end{proof}

Next, we prove a composition property for stochastic integrals. Recall that $L(\eta\star\mu)$ denotes the set of predictable processes that are integrable with respect to the semimartingale $\eta\star\mu$.
\begin{proposition}\label{P:190411}
Let $\eta \in L_\sigma(\mu)$ and $\zeta: \Omega \times [0,\infty) \rightarrow \R$ be a predictable process. Then the following statements are equivalent. 

\begin{enumerate}[label={\rm(\roman{*})}, ref={\rm(\roman{*})}]
\item\label{P:190411.i} $\zeta\in L(\eta\star\mu)$.
\item\label{P:190411.ii} $\zeta\eta\in L_\sigma(\mu)$.
\end{enumerate}
Furthermore, if either condition holds then
$\zeta\cdot(\eta\star\mu) = (\zeta\eta)\star\mu$.
Moreover, the same assertions hold with $\mu$ replaced by $\nu$.
\end{proposition}

\begin{proof}
We shall prove the statement only for $\mu$ as the same argument works if $\mu$ is replaced by $\nu$.
Note that there is a sigma--localizing sequence $(D_n)_{n \in \N}$ such that 
\[
D_n \subset \left\{(\omega,t)\in \Omega \times [0, \infty)  :  |\zeta(\omega,t)| \leq n \right\} 
\]
and $\indicator{D_n}\eta\in L(\mu)$ with $\indicator{D_n}\cdot(\eta\star\mu) = (\indicator{D_n}\eta)*\mu.$
 	Note that for each $n \in \N$ we have $\zeta \indicator{D_n} \in L(\eta\star\mu)$ and $\zeta \indicator{D_n} \eta\in L(\mu)$.
	Then if \ref{P:190411.i}  holds we have 
	\begin{align*}
\indicator{D_n}\cdot\left(\zeta\cdot(\eta\star\mu)\right) &= (\zeta \indicator{D_n}) \cdot (\indicator{D_n} \cdot(\eta\star\mu)) 
= (\zeta \indicator{D_n})  \cdot (\indicator{D_n}\eta * \mu)
= (\zeta \indicator{D_n} \eta) * \mu,
\end{align*}	
for each $n \in \N$. This yields  \ref{P:190411.ii} and $\zeta\cdot(\eta\star\mu) = (\zeta\eta)\star\mu$.

Assume now that \ref{P:190411.ii}  holds. We then have
	\begin{align*}
\indicator{D_n}\cdot\left((\zeta \eta)\star\mu\right) &{}= \left(\indicator{D_n}\zeta \eta\right)*\mu 
                                                         = (\zeta \indicator{D_n}) \cdot (\indicator{D_n}\eta * \mu)\\
                                                      &{}= (\zeta \indicator{D_n}) \cdot (\indicator{D_n} \cdot (\eta \star \mu))
																											   = (\zeta \indicator{D_n}) \cdot (\eta \star \mu)
\end{align*}	
for each $n \in \N$. An application of \cite[Lemma~2.2]{kallsen.04} then yields that \ref{P:190411.i} holds.
\end{proof}

\begin{proposition}\label{P:190528.2}
Let $\eta \in L_\sigma(\mu)$ and $f: \R \rightarrow \R$ be a twice continuously differentiable function. Then for $Y = Y_0 + \eta \star \mu$ we have $\xi = f(Y_- + \eta) - f(Y_-) \in L_\sigma(\mu)$ and $f(Y) = f(Y_0) + \xi \star \mu$.
\end{proposition}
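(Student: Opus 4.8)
The plan is to apply the classical Itô formula to $Y = Y_0 + \eta \star \mu$ and to recognize each resulting term as an ordinary or extended integral against $\mu$. The first point to establish is that $[Y,Y]^c = 0$: picking a sigma-localizing sequence $(D_n)_{n\in\N}$ for $\eta$ as in Definition~\ref{D:190405}\ref{D:190405:i}, one has $\indicator{D_n}\cdot[Y,Y]^c = [\indicator{D_n}\cdot Y,\indicator{D_n}\cdot Y]^c = [(\indicator{D_n}\eta)*\mu,(\indicator{D_n}\eta)*\mu]^c = 0$, since $*\mu$-integrals have finite variation, and letting $n\uparrow\infty$ yields the claim. Itô's formula \citep[I.4.57]{js.03} then reads
\[
	f(Y) = f(Y_0) + f'(Y_-)\cdot Y + \sum_{0<s\leq\cdot}\bigl(f(Y_s) - f(Y_{s-}) - f'(Y_{s-})\Delta Y_s\bigr),
\]
where, by that theorem, the jump sum converges absolutely on each finite time interval.

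The next step is to identify the two remaining terms. The process $f'(Y_-)$ is predictable and locally bounded, hence $\eta\star\mu$-integrable, so Proposition~\ref{P:190411} gives $f'(Y_-)\eta\in L_\sigma(\mu)$ together with $f'(Y_-)\cdot Y = (f'(Y_-)\eta)\star\mu$. For the jump sum, put $\theta_t(\omega,x) = f(Y_{t-}(\omega)+\eta_t(\omega,x)) - f(Y_{t-}(\omega)) - f'(Y_{t-}(\omega))\eta_t(\omega,x)$; this is a predictable function vanishing at $x=0$, and $\theta = \xi - f'(Y_-)\eta$ with $\xi$ as in the statement. Using the identity $\Delta(\eta\star\mu)_t = \int\eta_t(x)\,\mu(\{t\}\times\d x)$ — which follows from \eqref{eq:190405.1}, because in its three summands the contributions involving $\nu(\{t\}\times\d x)$ cancel — together with the fact that an integer-valued random measure charges at most one point at each time (\citealp[II.1.13]{js.03}), one verifies the pointwise identity $\int\theta_t(x)\,\mu(\{t\}\times\d x) = f(Y_t) - f(Y_{t-}) - f'(Y_{t-})\Delta Y_t$. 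The absolute convergence furnished by Itô's formula is exactly the statement that $|\theta|*\mu<\infty$, i.e., $\theta\in L(\mu)\subset L_\sigma(\mu)$ (so that $\theta*\mu = \theta\star\mu$), and summing the pointwise identity shows that $\theta\star\mu$ equals the jump sum.

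It then remains to assemble the pieces: by additivity of the extended integral (immediate from Definition~\ref{D:190405} on intersecting sigma-localizing sequences) we obtain $\xi = f'(Y_-)\eta + \theta\in L_\sigma(\mu)$ with $\xi\star\mu = (f'(Y_-)\eta)\star\mu + \theta\star\mu$, and hence $f(Y) = f(Y_0) + \xi\star\mu$. I expect the middle step to require the most care: the jump formula for $\eta\star\mu$ and the pointwise identification of $\int\theta\,\d\mu$ with the Itô jump term are where the structural decomposition of $\eta\star\mu$ from Proposition~\ref{P:190330} and the single-atom property of integer-valued random measures genuinely enter, whereas everything else is routine bookkeeping with Propositions~\ref{P:190330} and~\ref{P:190411}.
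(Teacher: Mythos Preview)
Your argument is correct and complete, but it proceeds along a different line from the paper. The paper does not invoke It\^o's formula at all: it first argues directly that $\xi\in L_\sigma(\mu)$ by splitting off the finitely many large jumps $\xi\indicator{|\eta|>1}\in L(\mu)$, then localizing so that $f'(Y_-)$ and $\sup_{|z|\leq 1}|f''(Y_-+z)|$ are bounded, and finally applying Taylor's theorem to control $|\xi|^2$ and $\xi\indicator{|\xi|\leq 1}$ via the criterion of Proposition~\ref{P:190330}. Once $\xi\in L_\sigma(\mu)$ is known, the paper simply observes that $\Delta f(Y)=\Delta(\xi\star\mu)$ and concludes by sigma-localization (both sides being in $\V^d_\sigma$ with the same jumps).

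Your route---apply It\^o, identify $f'(Y_-)\cdot Y$ as $(f'(Y_-)\eta)\star\mu$ via Proposition~\ref{P:190411}, recognize the jump correction as an ordinary $*\mu$-integral, and add---is more computational but has the advantage of being entirely explicit: one sees the formula emerge term by term rather than verifying membership in $L_\sigma(\mu)$ abstractly and then matching jumps. The paper's approach is shorter and leans more heavily on the structural characterization of $L_\sigma(\mu)$, while yours makes the cancellation of the $\nu$-contributions in the jump formula for $\eta\star\mu$ visible, which is a nice illustration of \eqref{eq:190405.1} in action.
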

\begin{proof} 
For each $n \in \N$ let $\tau_n$ denote the first time that $|Y|$ is greater than or equal to $n$. 
	Recall that $f(Y)$ is a semimartingale and that there exists a sequence $(D_n)_{n \in \N}$  of predictable sets increasing to $\Omega \times [0, \infty)$ such that $\indicator{D_n} \eta \in L(\mu)$. Without loss of generality we may assume that $D_n \subset \lc 0, \tau_n\rc$ for each $n \in \N$. 
	
It suffices now to argue that  $\indicator{D_n} \xi \in L(\mu)$ and that $\indicator{D_n} \cdot f(Y) = (\indicator{D_n} \xi) * \mu$ for each $n \in \N$. To argue these two claims, fix $n \in \N$. Observe now that
	\[
		\indicator{D_n} |\xi| \leq \indicator{D_n} \indicator{\{|\eta| > 1\}} |\xi| 
			+  L_{n+1}  \indicator{D_n} \indicator{\{|\eta| \leq 1\}} | \eta|,
	\] 
	where $L_{n+1}$ is the Lipschitz constant of $f$ restricted to the domain $[-n-1, n+1]$. 
	The first term on the right-hand side of the last display is in $L(\mu)$ as it only contributes finitely many terms and so is the second term as it is bounded by a constant times $\indicator{D_n} |\eta|$. Hence we have argued the first claim, namely that $\indicator{D_n} \xi \in L(\mu)$.   Next, note that It\^o's formula and Proposition~\ref{P:190528.1} yield  
	\begin{align*}
		\indicator{D_n} \cdot f(Y) &= \indicator{D_n} \cdot 
			\left( f'(Y_-) \cdot Y + (\xi - f'(Y_-) \eta) * \mu\right)\\
			&=  f'(Y_-) \cdot (\indicator{D_n} \cdot  Y) + ( \indicator{D_n} \xi -  \indicator{D_n} f'(Y_-) \eta) * \mu
			= (\indicator{D_n} \xi) * \mu.
	\end{align*}
	This is the second claim, and the assertion follows.
\end{proof}

\begin{example}\label{R:190423.1}
As a counterpoint to  Proposition~\ref{P:190411}, we will now exhibit an integer-valued random measure $\mu$ with the following properties.
\begin{enumerate}[label={\rm(\arabic{*})}, ref={\rm(\arabic{*})}]
\item\label{R:190423.1.i} $x\in L_\sigma(\mu)$.
\item\label{R:190423.1.ii} $x\star\mu$ is quasi-left-continuous with bounded jumps.
\item\label{R:190423.1.iii} there is a predictable process $\zeta\in L(x * (\mu-\nu))$ such that $\zeta\notin L(x\star\mu)$. 
\end{enumerate}

To this end, let $N$ denote a standard Poisson process. That is, $N$ jumps up by one with standard exponentially distributed waiting times and $B^N_t=t$ for all $t\geq 0$. 
Let now $\varphi_t = \sfrac{1}{k}$ for all $t\in [k-1,k)$ and all $k\in\N$, and fix $n \in \{1,2\}$. 
Then $\varphi^n\in L(N)\cap L(B^N)$ and $\varphi^n\cdot N = \varphi^n\cdot (N-B^N)+\varphi^n\cdot B^N$ is the sum of a uniformly integrable martingale and an increasing process (of bounded variation in the case $n = 2$).  Indeed, Kolmogorov's two-series theorem, applied to the sequence $(\varphi^n \cdot {(N-B^N)}_k)_{k \in \N}$, and an application of the Borel-Cantelli lemma, or Larsson and Ruf \cite[Corollary 4.4]{larsson.ruf.20}, yield the existence of the random variable $\varphi^n \cdot {(N-B^N)}_\infty = \lim_{t \uparrow \infty} \varphi^n \cdot {(N-B^N)}_t$.  The Burkholder-Davis-Gundy inequality yields that  $\varphi^n \cdot {(N-B^N)}$ is a uniformly integrable martingale as claimed.

In particular, it follows that the process $Y_t = \varphi^2 \cdot N_{\tan(t\wedge\sfrac{\pi}{2})}$ is  a special semimartingale on the whole time line with
$$B^Y_t = \int_0^{\tan(t\wedge\sfrac{\pi}{2})}\varphi_u^2\d u, \qquad t\geq 0.$$
	
	Statements \ref{R:190423.1.i}--\ref{R:190423.1.iii} now follow by taking $\mu = \mu^Y$, $\zeta_t = 1/\varphi_{\tan(t)}\indicator{t<\sfrac{\pi}{2}}$ and observing that $x\star\mu = Y$ and
	$$\zeta\cdot (x*(\mu-\nu))_t = \zeta\cdot(Y-B^Y)_t = {\varphi}\cdot (N-B^N)_{\tan(t\wedge\sfrac{\pi}{2})}, \qquad t \geq 0.$$
 From $\lim_{t\uparrow\infty}{\varphi}\cdot B^N_t=\infty$ we obtain 
$\zeta\notin L(B^Y)$, whereby $\zeta\in L(Y-B^Y)$ yields $\zeta\notin L(Y)=L(x\star\mu)$.\qed
\end{example}

\subsection{Sigma-locally finite variation pure-jump processes and the extended integral}	\label{SS:3.3}
The statements in  the previous subsections can also be expressed in terms of the class $\V^{\d}_\sigma$.

\begin{proposition}   \label{P:190729.2}
	If $\eta \in L_\sigma(\mu)$ then $\eta \star \mu \in \V^{\d}_\sigma$. Conversely, if $X\in \V^{\d}_\sigma$ then $x\in L_\sigma(\mu^X)$ and 
\begin{align}  \label{eq:19731.3}
	\begin{split}
		X &= X_0 + x\star\mu^X\\	
			&= X_0 + x\indicator{|x|> 1}*\mu^X+x\indicator{|x|\leq 1}*(\mu^X-\nu^X) + x\indicator{|x|\leq 1} \star \nu^X.
\end{split}
\end{align}
\end{proposition}
\begin{proof}
	The first part of the assertion
	follows directly from the definitions of $L_\sigma(\mu^X)$ and $\V^{\d}_\sigma$. The second equality in  \eqref{eq:19731.3} 
	is the consequence of Proposition~\ref{P:190330}.
\end{proof}
\begin{corollary}  \label{C:drift}
Let $X \in \V^{\d}_\sigma$. Then the following statements are equivalent. 
\begin{enumerate}[label={\rm(\roman{*})}, ref={\rm(\roman{*})}]
\item\label{C:drift.i}  $X$ is special.
\item\label{C:drift.ii} $x\in L_\sigma(\nu)$.
\end{enumerate}
Furthermore, if either condition holds then $B^X = x \star \nu^X$.
\end{corollary}
\begin{proof}
By \cite[Lemma~I.4.24]{js.03}, the semimartingale $X$ is special if and only if $x\indicator{|x|> 1}*\mu^X$ is special. By \cite[II.1.28]{js.03}, the finite variation pure-jump semimartingale $x\indicator{|x|> 1}*\mu^X$ is special if and only if $x\indicator{|x|> 1}\in L(\nu^X)$, in which case the drift of $x\indicator{|x|> 1}*\mu^X$ equals $x\indicator{|x|> 1}*\nu^X$. The claim now follows from 
Proposition~\ref{P:190729.2} and \cite[Definition~II.1.27]{js.03}.
\end{proof}
\begin{corollary}  \label{C:190730}
	Let $X \in \V^{\d}_\sigma$, $\zeta \in L(X)$, and $f$ be a twice continuously differentiable function.
Then $\zeta \cdot X, f(X) \in \V^{\d}_\sigma$ with 
	\begin{align*}
		\zeta \cdot X &= (\zeta x) \star \mu^X;\\
		f(X) &= f(X_0) + (f(X_- + x) - f(X_-)) \star \mu^X.
	\end{align*}
\end{corollary}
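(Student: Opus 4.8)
The plan is to obtain both identities by combining the representation $X = X_0 + x \star \mu^X$ furnished by Proposition~\ref{P:190729.2} with the composition rules of Propositions~\ref{P:190411} and~\ref{P:190528.2}, and then to read off membership in $\V^{\d}_\sigma$ from the first assertion of Proposition~\ref{P:190729.2}.

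For the stochastic integral $\zeta \cdot X$: since $X \in \V^{\d}_\sigma$, Proposition~\ref{P:190729.2} gives $x \in L_\sigma(\mu^X)$ together with $X = X_0 + x \star \mu^X$. Because the constant $X_0$ contributes nothing to a stochastic integral, $\zeta \in L(X)$ is the same as $\zeta \in L(x \star \mu^X)$, and $\zeta \cdot X = \zeta \cdot (x \star \mu^X)$. Applying Proposition~\ref{P:190411} with $\eta = x$ and $\mu = \mu^X$ then yields simultaneously $\zeta x \in L_\sigma(\mu^X)$ and $\zeta \cdot X = (\zeta x) \star \mu^X$; a further appeal to the first assertion of Proposition~\ref{P:190729.2} shows that this process lies in $\V^{\d}_\sigma$.

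For the smooth transformation $f(X)$: apply Proposition~\ref{P:190528.2} with $\eta = x$, $\mu = \mu^X$, and $Y = X = X_0 + x \star \mu^X$. This directly gives $\xi := f(X_- + x) - f(X_-) \in L_\sigma(\mu^X)$ and $f(X) = f(X_0) + \xi \star \mu^X$. The first part of Proposition~\ref{P:190729.2} gives $\xi \star \mu^X \in \V^{\d}_\sigma$, and since adding the constant $f(X_0)$ does not affect membership in $\V^{\d}_\sigma$, we conclude $f(X) \in \V^{\d}_\sigma$.

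The argument is a routine assembly of the preceding results, so I do not anticipate a genuine obstacle. The only points needing a word of care are that the initial values $X_0$ and $f(X_0)$ play no role in either the stochastic integral or in deciding membership in $\V^{\d}_\sigma$ (which, being built from $\V^{\d}$ by sigma-localization, is stable under adding constants), and that $L(X)$ and $L(x \star \mu^X)$ coincide, which is immediate from $X - X_0 = x \star \mu^X$.
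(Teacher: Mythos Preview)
Your proof is correct and follows exactly the route indicated in the paper, which simply cites Proposition~\ref{P:190729.2} in conjunction with Propositions~\ref{P:190411} and~\ref{P:190528.2}; you have merely made explicit the details of how these three results are assembled.
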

\begin{proof}
	This follows from Proposition~\ref{P:190729.2} in conjunction with Propositions~\ref{P:190411} and \ref{P:190528.2}.
\end{proof}

Recall that semimartingale $X$ is said to be quasi-left-continuous if $\Delta X_\tau=0$ almost surely on $\{\tau<\infty\}$ for each predictable time $\tau$. Yoeurp \cite{yoeurp.76} has shown that every local martingale can be uniquely decomposed into two components, one quasi-left-continuous and the other with jumps only at predictable times, such that the quadratic covariation of the two components is zero. 
This motivates the following result.

\begin{proposition}  \label{P:190729}
Every semimartingale $X$ has the unique decomposition 
\begin{equation}\label{eq:190520.1}
X = X_0 + X^{\q} + X^{\ddp},
\end{equation}
where  $X^\q_0=X^{\ddp}_0 = 0$, $X^{\q}$ is a quasi-left-continuous semimartingale,  $X^{\ddp}$ jumps only at predictable times, and $X^{\ddp} \in \V_\sigma^\d$. We then have $ [X^\q,X^\ddp] = 0$.
\end{proposition}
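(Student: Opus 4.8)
The plan is to build $X^{\ddp}$ explicitly as a sum of the jumps of $X$ occurring at predictable times, then define $X^{\q}$ as the remainder and check all the required properties. First I would recall the classical fact (essentially Yoeurp's decomposition applied to the compensated small-jump martingale part, or a direct argument via the jump measure) that the set of predictable times at which $X$ jumps can be exhausted by a sequence $(\sigma_k)_{k\in\N}$ of predictable times with disjoint graphs, so that $\sum_k \Delta X_{\sigma_k}\indicator{\lc\sigma_k,\infty\lc}$ captures exactly the ``accessible'' part of the jumps. The candidate for $X^{\ddp}$ is the compensated version of this sum: writing $D = \bigcup_k \lc\sigma_k\rc$ for the (thin, predictable) union of the graphs and $\widehat{\mu}^X = \indicator{D}\cdot\mu^X$ for the restriction of the jump measure to predictable times, I would set $X^{\ddp} = x\star\widehat\mu^X$ once I have shown $x\in L_\sigma(\widehat\mu^X)$; the point of sigma-localization (rather than the ordinary integral) is precisely that the jumps at predictable times need not be absolutely summable, exactly as in the introductory example.

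The key steps, in order, are: (1) verify $x\in L_\sigma(\widehat\mu^X)$ using the criterion in Proposition~\ref{P:190330} --- the condition $x^2*\widehat\mu^X<\infty$ follows from $x^2*\widehat\mu^X\le x^2*\mu^X\le[X,X]<\infty$, and the condition $x\indicator{|x|\le1}\in L_\sigma(\widehat\nu^X)$ (where $\widehat\nu^X$ is the compensator of $\widehat\mu^X$, supported on the predictable times $\sigma_k$) follows because on each graph $\lc\sigma_k\rc$ the compensator integral against $\widehat\nu^X$ reduces to a sum of conditional expectations $\E[\Delta X_{\sigma_k}\indicator{|\Delta X_{\sigma_k}|\le1}\mid\sigalg{F}_{\sigma_k-}]$, and sigma-localizing over $\bigcup_{k\le n}\lc\sigma_k\rc$ makes each such truncated integral trivially finite; (2) define $X^{\ddp}=x\star\widehat\mu^X$, which lies in $\V^{\d}_\sigma$ by Proposition~\ref{P:190729.2}, starts at zero, and has jumps $\Delta X^{\ddp}_{\sigma_k}=\Delta X_{\sigma_k}$ and no jumps elsewhere; (3) set $X^{\q}=X-X_0-X^{\ddp}$ and check it is quasi-left-continuous, i.e.\ $\Delta X^{\q}_\tau=0$ a.s.\ for every predictable time $\tau$, which holds because at a predictable time $\tau$ with positive probability of a jump, $\tau$ is covered up to a null set by the exhausting sequence, so $\Delta X^{\q}_\tau = \Delta X_\tau - \Delta X^{\ddp}_\tau = 0$; (4) establish $[X^{\q},X^{\ddp}]=0$: since $X^{\ddp}$ is a pure-jump (quadratic pure-jump) process, $[X^{\q},X^{\ddp}]=\sum_{t\le\cdot}\Delta X^{\q}_t\,\Delta X^{\ddp}_t$, and this vanishes because $\Delta X^{\ddp}$ is supported on $\bigcup_k\lc\sigma_k\rc$ where $\Delta X^{\q}=0$; (5) prove uniqueness: if $X=X_0+Y^{\q}+Y^{\ddp}$ is another such decomposition, then $Z=X^{\q}-Y^{\q}=Y^{\ddp}-X^{\ddp}$ is simultaneously quasi-left-continuous and jumps only at predictable times, hence $\Delta Z\equiv0$, so $Z$ is continuous; but $Z\in\V^{\d}_\sigma$ as the difference of two elements of $\V^{\d}_\sigma$ (using Theorem~\ref{T:1}\ref{T:1.i}, or directly that $\V^{\d}_\sigma$ is stable under differences), and a continuous element of $\V^{\d}_\sigma$ is identically zero since its jump measure is null, giving $x\star\mu^Z = 0$ and $Z = Z_0 = 0$.

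The main obstacle I anticipate is step (1), specifically the verification that $x\indicator{|x|\le1}\in L_\sigma(\widehat\nu^X)$ --- equivalently, organizing the graphs $\lc\sigma_k\rc$ into a genuine sigma-localizing sequence $D_n$ such that $\indicator{D_n}x\indicator{|x|\le1}\in L(\widehat\nu^X)$ with the integral giving an $\indicator{D_n}$-stopped semimartingale. One has to be slightly careful that the finitely-many-graphs truncation $D_n=\bigcup_{k\le n}\lc\sigma_k\rc$ increases to the support of $\widehat\mu^X$ but not to all of $\Omega\times[0,\infty)$; this is harmless because off the support of $\widehat\mu^X$ the integrand $x$ against $\widehat\mu^X$ is zero, so one may enlarge each $D_n$ by the complement of the support without affecting anything. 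The second potentially delicate point is confirming that the accessible jumps of a general semimartingale really are exhaustible by predictable times with the compensator behaving as claimed --- this is standard (see e.g.\ \citealp{js.03}, or the discussion of the purely discontinuous/accessible parts of local martingales) but should be invoked carefully, perhaps by first reducing $X$ to its special part $X[1]$ via $X = x\indicator{|x|>1}*\mu^X + X[1]$ and handling the large jumps (finitely many on compacts) separately.
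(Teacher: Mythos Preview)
Your plan is correct, but it takes a noticeably longer route than the paper. The paper observes that the indicator
\[
\zeta=\indicator{\{\nu^X(\{\cdot\},\R)>0\}}=\sum_k \indicator{\lc\sigma_k\rc}
\]
is a \emph{bounded predictable process}, so one may simply \emph{define} $X^{\ddp}=\zeta\cdot X$ and $X^{\q}=(1-\zeta)\cdot X$ as ordinary stochastic integrals. Existence of the decomposition, quasi-left-continuity of $X^{\q}$, and $[X^{\q},X^{\ddp}]=\zeta(1-\zeta)\cdot[X,X]=0$ are then immediate, and $X^{\ddp}\in\V^{\d}_\sigma$ follows from the one-line sigma-localization $D_n=(\Omega\times[0,\infty))\setminus\bigcup_{k\ge n}\lc\sigma_k\rc$, since $\indicator{D_n}\cdot X^{\ddp}=\sum_{k<n}\Delta X_{\sigma_k}\indicator{\lc\sigma_k,\infty\lc}\in\V^{\d}$. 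Uniqueness is dispatched in one sentence: $\Delta X^{\ddp}_\tau=\Delta X_\tau$ at every predictable $\tau$, so the jump measure of $X^{\ddp}$ (hence $X^{\ddp}$ itself, by Proposition~\ref{P:190729.2}) is determined.

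Your construction via the extended integral $x\star\widehat{\mu}^X$ ultimately produces the same object, but the ``main obstacle'' you anticipate in step~(1) is real and you have not quite closed it: to invoke Proposition~\ref{P:190330} you need $x\indicator{|x|\le1}\in L_\sigma(\widehat{\nu}^X)$, and by Lemma~\ref{L:Kallsen} this requires not just that each truncated sum of conditional expectations is finite (which is trivial) but that the \emph{full} sum $\sum_k\bigl|\E[\Delta X_{\sigma_k}\indicator{|\Delta X_{\sigma_k}|\le1}\mid\sigalg{F}_{\sigma_k-}]\bigr|$ is finite. This is true --- these are precisely the jumps of the predictable finite-variation process $B^{X[1]}$ at the $\sigma_k$, so the sum is bounded by its total variation --- but you should say so. The paper's stochastic-integral shortcut avoids this verification entirely.
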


\begin{proof}
Let $\tau$ denote any predictable time. Note that $\Delta X_\tau = \Delta X^{\ddp}_\tau$ for any decomposition of $X$ by the quasi-left-continuity of $X^{\q}$. This proves the uniqueness of the decomposition.
Consider now the predictable process $(x^2\wedge 1)*\nu^X$. Applying \cite[I.2.24]{js.03} yields a family $(\tau_k)_{k \in \N}$ 
of predictable times that exhausts its jumps. Define next the bounded predictable process
\[
	\zeta=\indicator{\{\nu^X(\{\cdot\})>0\}}=\sum_{k = 1}^\infty \indicator{\lc\tau_k\rc}.
\]
Setting $X^{\q} = (1 - \zeta) \cdot X$ and $X^{\ddp} = \zeta \cdot X$ then yields the decomposition in \eqref{eq:190520.1}, the quasi-left-continuity of $X^{\q}$, and $ [X^\q,X^\ddp] = 0$.  Finally, setting $D_n =  (\Omega \times [0,\infty)) \setminus \bigcup_{k = n}^\infty \lc \tau_k\rc$ in Definition~\ref{D:190405}\ref{D:190405:i} for each $n \in \N$ yields  $X^{\ddp} \in \V_\sigma^\d$.
\end{proof}

\section{Classification of pure-jump processes}\label{S:4}
The following definition and theorem provide a precise formulation of the relationships among the various families of pure-jump processes.  For notation and setup, see Section~\ref{S:2}. For a review of the semimartingale topology on the space of semimartingales $\S$, see Appendix~\ref{S:Emery}.
\begin{definition} \label{D.1}
	Consider the following subsets of $\S$.
	\begin{itemize}
\item $\J^1$: the class of quadratic pure-jump processes, i.e., those semimartingales $X$ that satisfy 
$[X,X]^c = [X,X] - x^2 * \mu^X = 0$ (see Protter \cite[p.~63]{protter.90}).
\item $\J^2$: the class of pure-jump processes, i.e., those semimartingales $X$ that satisfy 
\begin{align} \label{eq:201027.1}
	X = X_0 + \sum_{k = 1}^\infty \Delta X_{\tau_k} \indicator{\lc \tau_k, \infty\lc}
\end{align}
	in the semimartingale topology for a family $(\tau_k)_{k \in \N}$ of stopping times. 
\item $\J^3$: the class of strong pure-jump processes, i.e., those semimartingales $X \in \J^2$ that satisfy $X = Y$ for all $Y \in \J^2$ with $\mu^Y = \mu^X$ and $Y_0 = X_0$.
\item  $\J^4 = \V^{\d}_\sigma$: the sigma-localized class of finite variation pure-jump processes.

\item   $\J^5= \V^{\d}$: the class of finite variation pure-jump processes, i.e., those semimartingales $X$ that satisfy $X=X_0+x*\mu^X$.
\item   $\J^6$: the class of piecewise constant processes with finitely many jumps on each finite time interval.\qed
\end{itemize}
\end{definition}
 
 \begin{remark}
 	Let us explain \eqref{eq:201027.1} along with the qualifier ``in the semimartingale topology.'' To this end, for each $n \in \N$, consider the process $X^{(n)}  =  X_0 + \sum_{k = 1}^n \Delta X_{\tau_k} \indicator{\lc \tau_k, \infty\lc}$. Then \eqref{eq:201027.1} should be read as $X = \lim_{n \uparrow \infty} X^{(n)}$ in the semimartingale topology, meaning that
	\begin{align*}
		\lim_{n \uparrow \infty} \left( \sup_{\zeta: |\zeta| \leq 1} \E\left[\left|  \zeta \cdot X^{(k)}_t - \zeta \cdot X_t \right| \wedge 1\right] \right) = 0  
	\end{align*}
	for all $t \geq 0$, 
	where the supremum is taken over all predictable processes $\zeta$ with $|\zeta| \leq 1$. See, in particular, Definition~\ref{D:190622}. One might consider other topologies than the semimartingale topology, for example the one induced by uniform convergence on compacts in probability. However, such a choice turns out to be impractical as illustrated in Appendix~\ref{S:6}.
\qed
 \end{remark}

\begin{theorem}\label{T:1}
We always have
\begin{align} \label{eq:190731}
	\J^1\supsetneq \J^2 \supset \J^3 \supset \J^4 \supset \J^5 \supsetneq \J^6.
\end{align}
In general, these set inclusions are strict. More precisely, there exists a filtered probability space such that simultaneously we have
\begin{align} \label{eq:190826}
	\J^2 \supsetneq \J^3 \supsetneq \J^4 \supsetneq \J^5.
\end{align}
Moreover, the following statements hold. 
\begin{enumerate}[label={\rm(\roman{*})}, ref={\rm(\roman{*})}]
	\item\label{T:1.i} For all $i \in \{1, 2, 3, 4\}$, the families  $\J^i$ equal their sigma-localized class $\J^i_\sigma$; that is, $\J^i = \J^i_{\sigma}$.  Furthermore, by definition, $\J^4 = \J^5_\sigma$.  
	\item\label{T:1.ii} For all $i \in \{1, 2,3,4,6\}$, the families  $\J^i$  are closed under stochastic integration.  
	\item\label{T:1.iii} For all $i \in \{1, \ldots, 6\}$, the families  $\J^i$  are invariant under equivalent measure changes. More precisely, with the obvious notation, if $\Qu$ is locally absolutely continuous with respect to $\P$ we have $\J^i(\P) \subset \J^i(\Qu)$ for all $i \in \{1, \ldots, 6\}$. 
\end{enumerate}
\end{theorem}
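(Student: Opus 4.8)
The plan is to handle the universal chain \eqref{eq:190731} first, then parts (i)--(iii), and finally the strict separations \eqref{eq:190826}, relying throughout on the description of $\V^d_\sigma$ and of the sigma-localized jump integral from Section~\ref{S:3} and on standard stability of the \'Emery topology (Section~\ref{S:Emery}). The inclusions $\J^5\subset\J^4$, $\J^3\subset\J^2$ are definitional and $\J^6\subset\J^5$ is obvious. For $\J^2\subset\J^1$ one notes that every single-jump process, hence every partial sum of a pure-jump representation, lies in $\J^1$ (finite variation forces $[X,X]^c=0$), and that $\J^1$ is closed in $\S$; so the $\S$-limit stays in $\J^1$. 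The one inclusion with content is $\J^4\subset\J^3$. Here one first checks $\J^5\subset\J^2$ (write $x*\mu^X$ as a jump series whose tails have vanishing variation on compacts, hence that converges in $\S$) and then $\J^4\subset\J^2$ via $\J^2=\J^2_\sigma$ from (i). For the uniqueness clause of $\J^3$: given $X\in\J^4$ and $Y\in\J^2$ with $\mu^Y=\mu^X$, $Y_0=X_0$, fix $(D_n)$ with $\indicator{D_n}\cdot X\in\V^d$; since $\indicator{D_n}\cdot Y$ and $\indicator{D_n}\cdot X$ have the same jumps, absolutely summable on compacts, restricting the pure-jump representation of $Y$ to $D_n$ yields an absolutely, hence unconditionally, convergent series whose sum must be $\indicator{D_n}\cdot X$; thus $\indicator{D_n}\cdot(Y-X)=0$ for every $n$, and letting $n\to\infty$ in $\S$ gives $Y=X$.

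Parts (i)--(iii) are then mostly bookkeeping on top of Section~\ref{S:3}. For (i): $\J^4_\sigma=\J^4$ because sigma-localization is idempotent, $\J^4=\J^5_\sigma$ by definition, $\J^1_\sigma=\J^1$ directly from $\indicator{D_n}\cdot[X,X]^c=[\indicator{D_n}\cdot X,\indicator{D_n}\cdot X]^c$, $\J^2_\sigma=\J^2$ from sigma-stability of the pure-jump representation, and $\J^3_\sigma=\J^3$ from the localized-uniqueness argument above. For (ii): closure under stochastic integration is clear for $\J^6$ and for $\J^1$ (via $[H\cdot X,H\cdot X]^c=H^2\cdot[X,X]^c$), follows for $\J^4$ by truncating $H$ and integrating a bounded predictable process against a $\V^d$ process, for $\J^2$ by truncating $H$, using $\S$-continuity of $K\mapsto K\cdot X$ for bounded $K$ together with sigma-stability, and for $\J^3$ by pulling a competitor $Y$ back through $(1/H)\indicator{H\ne0}$: strongness of $X$ forces $\indicator{H\ne0}\cdot Y=H\cdot X$, after which $Y-H\cdot X=\indicator{H=0}\cdot Y\in\J^2$ is continuous and hence zero. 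For (iii): $\mu^X$, $[X,X]^c$, the finite-variation and piecewise-constant properties, membership of $\V^d_\sigma$, and $\S$-convergence of any prescribed jump series are all preserved on passing from $\P$ to $\Qu\ll_{\loc}\P$ (a $\P$-semimartingale is a $\Qu$-semimartingale, the predictable $\sigma$-algebra does not depend on the measure, and convergence in $\P$-probability implies convergence in $\Qu$-probability), and for $\J^3$ one uses in addition the jump-measure characterization of strong pure-jump processes, which is manifestly invariant under $\Qu\ll_{\loc}\P$.

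It remains to establish strictness. The always-strict inclusions are witnessed by $t\mapsto t\in\J^1\setminus\J^2$ (a non-constant continuous process is never pure-jump, since all jumps in a representation vanish) and by a deterministic finite-variation function with infinitely many jumps on $[0,1]$ in $\J^5\setminus\J^6$. For \eqref{eq:190826} I would place three processes on one (product) filtered space. In $\J^4\setminus\J^5$: the martingale of the Introduction, jumps $\pm\sfrac{1}{n}$ at $2-\sfrac{1}{n}$, whose stopped versions lie in $\V^d$ while its jumps are not absolutely summable. In $\J^3\setminus\J^4$: a purely discontinuous square-integrable martingale $X=\sum_k\epsilon_k k^{-\gamma}\indicator{\lc\tau_k,\infty\lc}$ with $\gamma\in(\sfrac{1}{2},1)$, independent signs $\epsilon_k$ and independent totally inaccessible times $\tau_k$ of hazard rate $k^{\gamma-1}$ on $[0,1]$; orthogonality of the single-jump martingales makes the representation unconditionally $\S$-convergent, so $X\in\J^3$, whereas $\sum_k k^{-\gamma}k^{\gamma-1}=\infty$ prevents any predictable set of positive $\d s\otimes\d\P$-measure from localizing $X$ into $\V^d$, so $X\notin\V^d_\sigma$. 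In $\J^2\setminus\J^3$: the analogous construction with deterministic alternating signs $(-1)^k$, so that the single-jump compensators contribute a drift whose natural series $\sum_k\tfrac{(-1)^k}{k}(\cdot)$ is only conditionally convergent; re-enumerating the (totally inaccessible) jump times then steers this drift to a different limit $L$, producing a genuinely different $X'\in\J^2$ with $\mu^{X'}=\mu^X$, $X'_0=X_0$. The main obstacle is exactly this last point --- a semimartingale Riemann rearrangement: one has to realize the re-enumeration by stopping times, verify that the re-ordered series still converges in $\S$ (the $\Hsq$-bounds make the martingale part automatic, but the drift part must be controlled by hand), and confirm that the limit is a bona fide semimartingale sharing the jump measure of $X$. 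Checking that the three examples sit on a common filtered space --- and that none of the inclusions in \eqref{eq:190731} collapses there --- then yields \eqref{eq:190826}.
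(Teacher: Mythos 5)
The portions of your argument dealing with the chain \eqref{eq:190731} and with statements \ref{T:1.i} and \ref{T:1.ii} essentially track the paper's proof (you silently use the merging/diagonalization of approximating stopping-time sequences that the paper isolates as Lemma~\ref{L:190622.2}, but the idea is the right one). Two steps, however, contain genuine gaps. First, in \ref{T:1.iii} the case $i=3$ is not ``manifest'': there is no jump-measure characterization of $\J^3$ to invoke, and the defining uniqueness property quantifies over all competitors in $\J^2(\Qu)$ --- a class that may contain processes which are not $\P$--semimartingales, let alone elements of $\J^2(\P)$. To transfer strongness from $\P$ to $\Qu$ one must \emph{lift} any $Y\in\J^2(\Qu)$ with $\mu^Y=\mu^X$ back to an element of $\J^2(\P)$; this is the content of Lemma~\ref{L:FlightToBeirut2}, which requires stopping before the density process approaches zero and a careful argument (predictability of the first time the limiting drift has infinite variation, convergence of the $\P$--drifts of the single-jump summands). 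Your proposal contains no substitute for this step.

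Second, your separating example for $\J^3\setminus\J^4$ fails. With jump sizes $\pm k^{-\gamma}$ and hazard rates $k^{\gamma-1}$ the kernel satisfies $|x|F(\{x\})\le k^{-1}\to 0$, so condition \eqref{eq:190623} holds while \eqref{eq:190828} fails; by the equivalence \ref{C:1.i}$\Leftrightarrow$\ref{C:1.ii} of Corollary~\ref{C:1} --- concretely, by the construction of Lemma~\ref{L:190623.4}, which truncates positive and negative small jumps at different predictable thresholds --- one can build a pure-jump process with the same jump measure and a prescribed nonzero drift. Hence your $X$ lies in $\J^2\setminus\J^3$, not in $\J^3$: unconditional $\S$--convergence of \emph{one} representation says nothing about uniqueness across \emph{all} processes sharing the jump measure, which is what strongness means. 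A genuine element of $\J^3\setminus\J^4$ must violate \eqref{eq:190623}, i.e.\ have large atoms near zero, as in Example~\ref{ex:190828}, and proving its strongness is the hardest part of the paper (Lemmata~\ref{L:190525.1}--\ref{L:190626} together with the lacunary $3^k$ lower bound); nothing in your sketch addresses this. Finally, the separation $\J^2\supsetneq\J^3$, which you yourself flag as the ``main obstacle'' (the semimartingale Riemann rearrangement), is left unproved; the paper realizes it not by re-enumerating jump times but by asymmetric predictable jump-size truncations (Lemmata~\ref{L:190827} and \ref{L:190623.4}), which automatically give adaptedness, $\S$--convergence and the correct jump measure. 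As it stands, \eqref{eq:190826} and statement \ref{T:1.iii} for $i=3$ are not established.
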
 

Theorem~\ref{T:1} is proved in Section~\ref{S:5}.
The strictness of the inclusion $\J^4 \subsetneq \J^3$ is of interest. It says that there exist strong pure-jump processes, i.e., pure-jump processes uniquely determined by their jump measure, that are not sigma-locally of finite variation. To prove the strictness of this inclusion, Subsection~\ref{SS:5.6} contains a specific example of such a process $X$ (Example~\ref{ex:190828}).  This example relies on a jump measure $\mu^X$ with predictable compensator $\nu^X$ that supports a countable set of jump sizes. 

To gain insight, consider the disintegrated form $\nu^X(\d t, \d x) = F_t^X(\d x) \d A_t^X$, where $F^X$ is a transition kernel (see Section~\ref{S:2} for more details). The jump measure $\mu^X$ in Example~\ref{ex:190828} relies on a kernel $F^X$ that has large atoms in a neighbourhood of zero. As it turns out, this example is canonical. Indeed, Corollary~\ref{C:1} below states if $X$ is a strong pure-jump process whose associated jump size kernel does not allow for too many large atoms, then $X$ must be sigma-locally of finite variation.

We have already  observed that a process $X \in \J^4 \subset \J^3$ is uniquely described by its jump measure. The following corollary of the proof of Theorem~\ref{T:1} provides explicit characterizations of the processes in $\J^4$ in relation to the bigger classes $\J^1, \J^2$, and $\J^3$. A further analytic representation for such processes has been provided in Proposition~\ref{P:190729.2}.

\begin{corollary}  \label{C:1}
Let $X$ denote a process. Then the following statements are equivalent.
\begin{enumerate}[label={\rm(\roman{*})}, ref={\rm(\roman{*})}]
	\item\label{C:1.i} $X \in \J^4 = \V^{\d}_\sigma$. 
	\item\label{C:1.ii} $X \in \J^3$ and 
	\begin{align} \label{eq:190623}
		\left(\limsup_{x \downarrow 0}  x F^X(\{x\})\right)  \wedge \left(\limsup_{x \uparrow 0}  |x| F^X(\{x\})\right)  = 0, \qquad \text{$(\P \times \d A^X)$--a.e.}
	\end{align}	
	\item\label{C:1.iii} $X \in \J^2$ and 
	\begin{align} \label{eq:190828}
		\int |x| \indicator{|x| \leq 1}  F^X(\d x) < \infty, \qquad \text{$(\P \times \d A^X)$--a.e.}
	\end{align}	
	\item\label{C:1.iv} $X \in \J^1$, \eqref{eq:190828} holds, $\int_0^\cdot |\int x \indicator{|x| \leq 1} F_t^X(\d x) | \d A_t^X < \infty$, and 
	\[
		B^{X[1]} = \int_0^\cdot \left(\int x \indicator{|x| \leq 1} F_t^X(\d x) \right) \d A_t^X = x \indicator{|x| \leq 1}\star\nu^X.
	\]
	Here $B^{X[1]}$ denotes the drift of $X - x \indicator{|x| > 1} * \mu^X$; see also Section~\ref{S:2}.
\end{enumerate}
\end{corollary}
Corollary~\ref{C:1} is proved in Subsection~\ref{SS:C:1}. 	Note that the condition in \eqref{eq:190623} is satisfied, for example, if $F^X$ is atomless $(\P \times \d A^X)$--a.e.

The following is a corollary of the inclusion $\J^2 \subset \J^1$, stated in Theorem~\ref{T:1}. If $X \in \J^1$ is predictable then it is well known that $X$ is a finite-variation process. The next assertion illustrates that restricting oneself to $\J^2$ yields  a finite-variation pure-jump process.
\begin{corollary}
	If $X \in \J^2$ is predictable then $X \in \J^5 = \V^{\d}$.
\end{corollary}
\begin{proof}
	Since $X \in \J^2 \subset \J^1$ is predictable we have that $X$ is a finite-variation process.
	But then the convergence in \eqref{eq:201027.1} is actually in finite-variation and $X$ is the sum of its jumps, concluding the proof.
\end{proof}

We conclude this section with a remark on the process $X^\ddp \in \V^{\d}_\sigma$ of the semimartingale decomposition in Proposition~\ref{P:190729}. Observe that the family of predictable times $\mathcal{T}  = (\tau_k)_{k \in \N}$ from  the proof of Proposition~\ref{P:190729} exhausts the jumps of $X^\ddp$. Simultaneously, Theorem~\ref{T:1} yields $X^\ddp\in\J^2$. A priori, it is not clear that $\mathcal{T}$ is good enough to approximate $X^\ddp$ in $\J^2$ because the membership of $\J^2$ only ever guarantees one exhausting sequence of stopping times (with the desired convergence property) and that sequence is not even predictable in principle. The next result therefore appears to be rather strong.

\begin{proposition}  \label{P:190828.2}
Let $X$ satisfy $X=X^\ddp$ in the notation of Proposition~\ref{P:190729}. Let $(\tau_k)_{k \in \N}$ be any sequence of predictable times that exhausts the jumps of $X$. 
Then $(\tau_k)_{k \in \N}$ also approximates $X$ in $\J^2$, i.e., we have
$X = \sum_{k = 1}^\infty \Delta X_{\tau_k} \indicator{\lc \tau_k, \infty\lc}$	in the semimartingale topology.
\end{proposition}
\begin{proof}
Apply Lemma~\ref{L:190622.1}\ref{L:190622.1.iii} below with the same sequence $(D_n)_{n\in\N}$ as in the proof of Proposition~\ref{P:190729}.
\end{proof} 
For a related statement about the summability of jumps of a semimartingale $X=X^\ddp$ at predictable times, see Galtchouk \cite{galtchouk.80}.  There it is shown that for any sequence $(\tau_k)_{k \in \N}$ of predictable  times that exhausts the jumps of $X$ the limit $\lim_{n \uparrow \infty} \sum_{k = 1}^n \Delta X_{\tau_k} \indicator{\{\tau_k \leq t\}}$ exists almost surely for each $t \geq 0$. (Only the case when $X$ is a local martingale is considered, however the extension to the general case is straightforward.)

\section{Proof of Theorem~\ref{T:1} (and of Corollary~\ref{C:1})}  \label{S:5}

This section contains the proof of this paper's main theorem. It is split up in six subsections.  Subsections~\ref{SS:5.1}, \ref{SS:5.2}, and \ref{SS:5.3} provide the proofs of Theorem~\ref{T:1}\ref{T:1.i}, \ref{T:1.ii}, and \ref{T:1.iii}, respectively. Subsection~\ref{SS:5.4} yields the set inclusions in \eqref{eq:190731}. Then Subsection~\ref{SS:C:1}  focuses on the proof of  Corollary~\ref{C:1}, while Subsection~\ref{SS:5.6} concludes with a proof of \eqref{eq:190826}, namely the strictness of the inclusions.

\subsection{Proof of Theorem~\ref{T:1}\ref{T:1.i}}  \label{SS:5.1}
In this subsection we argue that $\J^i = \J^i_{\sigma}$ for all $i \in \{1, 2, 3, 4\}$. 
Indeed, fix $X \in \J^1_\sigma$ and the corresponding sigma-localizing 
sequence $(D_k)_{k \in \N}$ of predictable sets. Then
\[
	[X,X]^c =  \left(\lim_{k \uparrow \infty} \indicator{D_k}\right) \cdot [X,X]^c =  \lim_{k \uparrow \infty} \left( \indicator{D_k} \cdot [X,X]^c\right)  = 
	\lim_{k \uparrow \infty} \left [ \indicator{D_k} \cdot X, \indicator{D_k} \cdot X \right]^c = 0,
\]
which yields $X \in \J^1$.
As $\J^5=\V^{\d}$ is stable under sigma-stopping, \cite[Proposition~2.1]{kallsen.04} yields the statement for $i=4$.

The cases $i = 2$ and $i = 3$ follow from Lemmata~\ref{L:190623.1} and \ref{L:190623.2}. Before stating and proving them, we first present a useful tool for pure-jump processes in the next lemma.

\begin{lemma} \label{L:190622.2}
	Let $(X^{(k)})_{k \in \N} \subset \J^2$ be a sequence of pure-jump processes such that $X^{(k)}_0 = 0$ and $[X^{(k)}, X^{(l)}] = 0$ for all $k,l \in \N$ with $k \neq l$.
	Then the following two statements are equivalent. 
			\begin{enumerate}[label={\rm(\Roman{*})}, ref={\rm(\Roman{*})}] 
			\item\label{L:190622.2.I} 
			$\sum_{k=1}^\infty [X^{(k)}, X^{(k)}]   < \infty$ and $\sum_{k  = 1}^\infty B^{X^{(k)}[1]}$ converges in the  $\S$--topology to a  process $B$. 
			\item\label{L:190622.2.II} 
			$\sum_{k=1}^\infty X^{(k)}$ converges in the $\S$--topology to a process $X$.
			\end{enumerate}
			If one (hence both) of these conditions hold then $B^{X[1]} = B$, $\sum_{k = 1}^\infty \Delta X^{(k)} = \Delta X$, and $X$ is a pure-jump process.
\end{lemma}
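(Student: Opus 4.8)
The plan is to reduce the bulk of the statement to Lemma~\ref{L:190729}\ref{L:190729.iii} and then to prove separately the one genuinely new assertion, namely that the limit $X$ is a pure-jump process.

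First I would note that the hypotheses of Lemma~\ref{L:190729}\ref{L:190729.iii} are met: since $[X^{(k)},X^{(l)}]=0$ for $k\neq l$ and each $[X^{(k)},X^{(k)}]$ is nondecreasing, we have $\sum_{k,l}[X^{(k)},X^{(l)}]^- < \infty$. Lemma~\ref{L:190729}\ref{L:190729.iii} then immediately gives the equivalence of \ref{L:190622.2.I} and \ref{L:190622.2.II}, and, whenever they hold, both $B^{X[1]}=B$ and (using the extra pairwise orthogonality) $\sum_{k=1}^\infty \Delta X^{(k)} = \Delta X$. It therefore remains to show, assuming \ref{L:190622.2.II}, that $X\in\J^2$; the ``pure-jump'' clause in the last sentence of the lemma then follows.

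To that end I would expand each summand: since $X^{(k)}\in\J^2$, write $X^{(k)} = \sum_{j=1}^\infty Y^{(k,j)}$ in the $\S$--topology with $Y^{(k,j)} = \Delta X^{(k)}_{\tau^{(k)}_j}\indicator{\lc\tau^{(k)}_j,\infty\lc}$, where we may and do assume the graphs $\lc\tau^{(k)}_j\rc$, $j\in\N$, are pairwise disjoint. Each $Y^{(k,j)}$ is a single-jump finite variation process starting at $0$, hence an element of $\J^2$, and the full family $(Y^{(k,j)})_{k,j}$ is pairwise orthogonal: for $k=k'$ this is the disjointness of graphs, and for $k\neq k'$ it follows because $[X^{(k)},X^{(k')}]=0$ together with $[X^{(k)},X^{(k)}]^c = [X^{(k')},X^{(k')}]^c = 0$ (each $X^{(k)}$ is an $\S$--limit of finite variation pure-jump processes, so has vanishing continuous martingale part by Lemma~\ref{L:190622.1}\ref{L:190622.1.iv}) forces $X^{(k)}$ and $X^{(k')}$ to have no common jump times. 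Moreover Lemma~\ref{L:190729}\ref{L:190729.iii} applied inside each fixed row $k$ gives $\sum_j[Y^{(k,j)},Y^{(k,j)}] = [X^{(k)},X^{(k)}]$ and $\sum_j B^{Y^{(k,j)}[1]} = B^{X^{(k)}[1]}$ in the $\S$--topology.

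Now I would apply Lemma~\ref{L:190729}\ref{L:190729.iii} a second time, to $(Y^{(k,j)})_{k,j}$ under a suitable single enumeration. Its hypothesis holds ($\sum[Y^{(k,j)},Y^{(k',j')}]^- = 0$), and $\sum_{k,j}[Y^{(k,j)},Y^{(k,j)}] = \sum_k[X^{(k)},X^{(k)}] < \infty$ by \ref{L:190622.2.I}. The remaining and main obstacle is to produce an enumeration along which $\sum_{k,j}B^{Y^{(k,j)}[1]}$ converges in the $\S$--topology: we know only that the ``row sums'' $\sum_j B^{Y^{(k,j)}[1]}$ converge to $B^{X^{(k)}[1]}$ and that $\sum_k B^{X^{(k)}[1]} = B$ converges, so this is a Fubini-type statement for conditionally convergent series in the complete metric group $(\S, d_\S)$. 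I expect to handle it by a blocking/diagonal construction, exploiting that $d_\S$ is translation invariant and that $d_\S(0,\cdot)$ is subadditive over finite sums, choosing thresholds $K_n\uparrow\infty$ and $J_{n,k}\uparrow\infty$ so that every partial sum along a row-by-row ordering of the blocks stays near $B$, with the prefixes of freshly introduced rows controlled using the extra summability in \ref{L:190622.2.I} via localization and the Burkholder--Davis--Gundy inequality exactly as in the proof of Lemma~\ref{L:190729}\ref{L:190729.i}. Once Lemma~\ref{L:190729}\ref{L:190729.iii} yields that $\sum_{k,j}Y^{(k,j)}$ converges in $\S$ to some $X'$ with $B^{X'[1]}=B$ and $\sum_{k,j}\Delta Y^{(k,j)} = \Delta X'$, I would identify $X'=X$: the two have the same initial value $0$, the same jumps ($\Delta X' = \sum_{k,j}\Delta Y^{(k,j)} = \sum_k\Delta X^{(k)} = \Delta X$), hence the same large-jump part and therefore $X[1]-X'[1] = X-X'$, and the same drift $B^{X[1]} = B = B^{X'[1]}$, so $X-X'$ is a continuous local martingale with vanishing continuous martingale part, thus identically $0$. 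This gives $X = \sum_{k,j}Y^{(k,j)}$ in the $\S$--topology, i.e.\ $X\in\J^2$, and completes the proof.
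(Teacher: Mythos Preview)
Your reduction to Lemma~\ref{L:190729}\ref{L:190729.iii} for the equivalence \ref{L:190622.2.I}$\Leftrightarrow$\ref{L:190622.2.II} and for the identities $B^{X[1]}=B$, $\sum_k\Delta X^{(k)}=\Delta X$ is exactly what the paper does. The difference is in how you argue $X\in\J^2$.

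The paper's route is more direct than yours. Having written $X^{(k)}=\sum_n Y^{(k,n)}$ in $\S$ with $Y^{(k,n)}=\Delta X^{(k)}_{\tau_n^{(k)}}\indicator{\lc\tau_n^{(k)},\infty\lc}$, the paper simply diagonalizes \emph{on the distances to $X$ themselves}: pick $K_m$ with $d_\S\bigl(X,\sum_{k\le K_m}X^{(k)}\bigr)\le\tfrac1{2m}$, then $N_m$ with $d_\S\bigl(X^{(k)},\sum_{n\le N_m}Y^{(k,n)}\bigr)\le\tfrac1{2mK_m}$ for each $k\le K_m$; subadditivity of $d_\S$ gives $d_\S\bigl(X,\sum_{k\le K_m,\,n\le N_m}Y^{(k,n)}\bigr)\le\tfrac1m$, and a standard enumeration of the $\tau_n^{(k)}$ does the rest. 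No second application of Lemma~\ref{L:190729}\ref{L:190729.iii}, no verification of drift convergence for the doubly-indexed family, and no post-hoc identification $X'=X$ are needed.

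Your detour through the drifts is where the argument becomes shaky. You want an enumeration under which $\sum_{k,j}B^{Y^{(k,j)}[1]}$ converges in $\S$, knowing only the iterated convergence $\sum_j B^{Y^{(k,j)}[1]}\to B^{X^{(k)}[1]}$ and $\sum_k B^{X^{(k)}[1]}\to B$. In a general metric group this need not suffice, and your proposed fix---controlling ``prefixes of freshly introduced rows'' via BDG as in Lemma~\ref{L:190729}\ref{L:190729.i}---is not quite right: BDG bounds the martingale parts $Y^{(k,j)}[1]-B^{Y^{(k,j)}[1]}$ through the quadratic variation, not the drifts $B^{Y^{(k,j)}[1]}$ themselves. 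Whatever control you extract this way would in effect bound partial sums of the $Y^{(k,j)}$ directly, at which point you are back to the paper's argument. So the second invocation of Lemma~\ref{L:190729}\ref{L:190729.iii} and the identification step (which is correct, incidentally) are an unnecessary loop: run the same diagonal argument on $d_\S(X,\cdot)$ instead of on the drifts and you are done in one stroke.
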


\begin{proof} 
Thanks to Lemma~\ref{L:190729}\ref{L:190729.iii} it suffices to argue that $X$ is a pure-jump process provided the two statements hold.
For each $k \in \N$ we have a sequence $(\tau^{(k)}_n)_{n \in \N}$ of stopping times (by possibly setting $\tau^{(k)}_n = \infty$ for $n$ large enough if $X^{(k)}$ has only finitely many jumps) such that 
$X^{(k)} = \sum_{n=1}^\infty \Delta X_{\tau_n^{(k)}}^{(k)} \indicator{\lc \tau_n^{(k)}, \infty\lc}$ in the $\S$--topology and $\Delta X_{\tau_n^{(k)}}^{(k)} \neq 0$ on $\{\tau_n^{(k)} < \infty\}$.
Thanks to Lemma~\ref{L:190729}\ref{L:190729.iii} we have $\Delta X_{\tau_n^{(k)}} = \Delta X_{\tau_n^{(k)}}^{(k)}$ on $\{\tau_n^{(k)} < \infty\}$ for all $k,n \in \N$. Furthermore, $(\tau^{(k)}_n)_{k, n \in \N}$ exhausts the jumps of $X$.

Next, for each $m \in \N$, let $K_m$ and $N_m$ be the smallest integers such that
\[
	d_{\S} \left(X, \sum_{k=1}^{K_m} X^{(k)}\right) \leq \frac{1}{2 m}
\]
and 
\[
	d_{\S} \left(X^{(k)},  \sum_{n=1}^{N_m} \Delta X_{\tau_n^{(k)}}^{(k)} \indicator{\lc \tau_n^{(k)}, \infty\lc} \right) \leq \frac{1}{2 m K_m} 
	\qquad \text{for all $k \in \{1, \cdots, K_m\}$}.
\]
By a standard diagonalization argument we can now construct a sequence of stopping times $(\tau_i)_{i \in \N}$ such that 
  \[
	\lim_{m \uparrow \infty} d_{\S} \left(X, \sum_{i=1}^{m} \Delta X_{\tau_i} \indicator{\lc \tau_i, \infty\lc}\right) = 0, 
\]
yielding the statement.
\end{proof}

\begin{corollary}\label{C:190830}
The sum of two pure-jump processes whose quadratic covariation is zero is again a pure-jump process. 
\end{corollary}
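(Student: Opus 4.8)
The plan is to deduce this directly from Lemma~\ref{L:190622.2} by padding the given pair of processes into an infinite sequence. Concretely, let $X^{(1)}, X^{(2)} \in \J^2$ satisfy $[X^{(1)}, X^{(2)}] = 0$. Since membership in $\J^2$ constrains only the jump part --- adding a constant to a semimartingale changes neither its jump measure nor its quadratic covariation with another process, and leaves the defining series representation intact --- I would first reduce to the case $X^{(1)}_0 = X^{(2)}_0 = 0$, recovering the general statement by adding $X^{(1)}_0 + X^{(2)}_0$ back at the end.

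Next I would set $X^{(k)} = 0$ for every $k \geq 3$, so that $(X^{(k)})_{k \in \N} \subset \J^2$ (the zero process being trivially pure-jump, with an empty exhausting sequence, or all stopping times equal to $\infty$) and $X^{(k)}_0 = 0$ for each $k$. The pairwise orthogonality hypothesis $[X^{(k)}, X^{(l)}] = 0$ for $k \neq l$ then holds: it is the assumption when $\{k,l\} = \{1,2\}$ and is immediate otherwise, as the remaining processes vanish. Moreover $\sum_{k=1}^\infty X^{(k)} = X^{(1)} + X^{(2)}$ is a finite sum, hence converges (trivially) in the $\S$--topology to $X := X^{(1)} + X^{(2)}$; that is, statement~\ref{L:190622.2.II} of Lemma~\ref{L:190622.2} is satisfied. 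The concluding clause of that lemma then yields exactly that $X$ is a pure-jump process, which is the claim.

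I do not anticipate a genuine obstacle: the substantive content sits in Lemma~\ref{L:190622.2} (and, through it, in Lemma~\ref{L:190729}\ref{L:190729.iii}), which already handles the delicate point --- namely, that a single exhausting sequence of stopping times together with the required $\S$--topology convergence can be manufactured from the two individual ones by diagonalization. The only items needing care are the harmless initial-value reduction and the remark that the zero process lies in $\J^2$, both of which are routine.
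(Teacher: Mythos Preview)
Your proposal is correct and matches the paper's approach exactly: the paper states this result as an immediate corollary of Lemma~\ref{L:190622.2} without giving a separate proof, and your argument --- padding the two processes with zeros and invoking the concluding clause of that lemma --- is precisely the intended reading.
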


The next two lemmata  exploit the fact that $\J^2$ is stable under sigma-stopping thanks to Lemma~\ref{L:190622.1}\ref{L:190622.1.ii}.

\begin{lemma}  \label{L:190623.1}
	If $X \in \S$ is sigma--locally a pure-jump process then it is a pure-jump process. 
\end{lemma}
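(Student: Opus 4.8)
The plan is to reduce the statement to Lemma~\ref{L:190622.2}. Suppose $X$ is sigma-locally a pure-jump process, and fix a nondecreasing sequence $(D_n)_{n\in\N}$ of predictable sets with $\bigcup_{n\in\N}D_n=\Omega\times[0,\infty)$ and $\indicator{D_n}\cdot X\in\J^2$ for each $n\in\N$. Since $\indicator{D_n}\cdot(X-X_0)=\indicator{D_n}\cdot X$, and since $X\in\J^2$ whenever $X-X_0\in\J^2$ (add back the constant $X_0$ and note $\Delta X=\Delta(X-X_0)$), we may and do assume $X_0=0$.

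Next I would peel off the successive \emph{annuli}. Put $X^{(1)}=\indicator{D_1}\cdot X$ and $X^{(k)}=\indicator{D_k\setminus D_{k-1}}\cdot X$ for $k\ge 2$. Then $X^{(k)}_0=0$, the partial sums satisfy $\sum_{k=1}^n X^{(k)}=\indicator{D_n}\cdot X$ for every $n\in\N$, and---because the predictable sets $D_k\setminus D_{k-1}$ are pairwise disjoint---we have $[X^{(k)},X^{(l)}]=\indicator{(D_k\setminus D_{k-1})\cap(D_l\setminus D_{l-1})}\cdot[X,X]=0$ for $k\ne l$. To see that each $X^{(k)}$ belongs to $\J^2$, use associativity of the stochastic integral to write $X^{(k)}=\indicator{D_k\setminus D_{k-1}}\cdot(\indicator{D_k}\cdot X)$ and invoke the stability of $\J^2$ under sigma-stopping; the latter follows from Lemma~\ref{L:190622.1}\ref{L:190622.1.ii} applied termwise to an exhausting sequence of stopping times associated with $\indicator{D_k}\cdot X\in\J^2$.

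Finally, by Lemma~\ref{L:190622.1}\ref{L:190622.1.iii} (recall $X_0=0$) the partial sums $\sum_{k=1}^n X^{(k)}=\indicator{D_n}\cdot X$ converge to $X$ in the $\S$--topology; that is, condition~\ref{L:190622.2.II} of Lemma~\ref{L:190622.2} holds for the sequence $(X^{(k)})_{k\in\N}$. Lemma~\ref{L:190622.2} then yields at once that $X$ is a pure-jump process, as required. The only point that is not entirely formal is the verification of the hypotheses of Lemma~\ref{L:190622.2}---in particular the stability of $\J^2$ under sigma-stopping---since the substantive work (the diagonalization producing a single exhausting sequence of stopping times that approximates $X$) is already packaged inside Lemma~\ref{L:190622.2}.
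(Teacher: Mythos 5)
Your proposal is correct and follows essentially the same route as the paper: decompose $X$ over the disjoint predictable annuli $D_k\setminus D_{k-1}$, note that each piece lies in $\J^2$ by stability under sigma-stopping (Lemma~\ref{L:190622.1}\ref{L:190622.1.ii}), that the pieces have pairwise vanishing quadratic covariation, and that the partial sums $\indicator{D_n}\cdot X$ converge to $X$ in the $\S$--topology by Lemma~\ref{L:190622.1}\ref{L:190622.1.iii}, so that Lemma~\ref{L:190622.2} applies. The only cosmetic differences (the explicit reduction to $X_0=0$ and spelling out the sigma-stopping argument termwise) are harmless.
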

\begin{proof}
	By assumption there exists a nondecreasing sequence $(D_k)_{k \in \N}$ of predictable sets such that $\bigcup_{k \in \N} D_k = \Omega \times [0, \infty)$ and $\indicator{D_k} \cdot X$ is a pure-jump process. With $D_0 = \emptyset$, define $\overline D_k = D_k \setminus D_{k-1}$ for all $k \in \N$ and note that $X^{(k)} = \indicator{\overline D_k} \cdot X$ is also a pure-jump process as $\J^2$ is stable under sigma-stopping.  Moreover, we have $[X^{(k)}, X^{(l)}] = 0$ for all $k,l \in \N$ with $k \neq l$ and 
	\[
		\sum_{k  = 1}^n X^{(k)} = \sum_{k  = 1}^n \left(\indicator{\overline D_k} \cdot X\right) = \indicator{D_n} \cdot X, \qquad n \in \N,
	\]
	which  converges in the $\S$--topology to $X$ (as $n \uparrow \infty$) thanks to Lemma~\ref{L:190622.1}\ref{L:190622.1.iii}.
Hence by Lemma~\ref{L:190622.2}, $X$ is a pure-jump process.
\end{proof}

\begin{lemma}  \label{L:190623.2}
	If $X \in \S$ is sigma--locally a strong pure-jump process then it is a strong pure-jump process. 
\end{lemma}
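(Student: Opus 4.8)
The plan is to mirror the structure of the proof of Lemma~\ref{L:190623.1}, but with the extra work of verifying the uniqueness condition defining $\J^3$. Suppose $X$ is sigma-locally a strong pure-jump process, with a nondecreasing sequence $(D_k)_{k\in\N}$ of predictable sets exhausting $\Omega\times[0,\infty)$ such that $\indicator{D_k}\cdot X\in\J^3$. Since $\J^3\subset\J^2$, Lemma~\ref{L:190623.1} already tells us $X\in\J^2$; so we only need to show that $X$ satisfies the defining uniqueness property of $\J^3$. To that end, fix an arbitrary $Y\in\J^2$ with $\mu^Y=\mu^X$ and $Y_0=X_0$; the goal is to conclude $Y=X$.

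First I would reduce to the case $X_0=Y_0=0$ by subtracting the common starting value. The key step is then to transfer the equality of jump measures to each piece: since $\mu^Y=\mu^X$, the two processes have the same jumps, so for each $k$ we have $\mu^{\indicator{D_k}\cdot Y}=\mu^{\indicator{D_k}\cdot X}$ (the predictable set $D_k$ selects the same jumps from each). Moreover $\indicator{D_k}\cdot Y\in\J^2$ because $\J^2$ is stable under sigma-stopping (Lemma~\ref{L:190622.1}\ref{L:190622.1.ii}), and $(\indicator{D_k}\cdot Y)_0=0=(\indicator{D_k}\cdot X)_0$. Since $\indicator{D_k}\cdot X\in\J^3$, its defining property forces $\indicator{D_k}\cdot Y=\indicator{D_k}\cdot X$ for every $k\in\N$. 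Finally, letting $k\uparrow\infty$ and applying Lemma~\ref{L:190622.1}\ref{L:190622.1.iii} to both $X$ and $Y$ (both start at zero), we get $\indicator{D_k}\cdot Y\to Y$ and $\indicator{D_k}\cdot X\to X$ in the $\S$--topology, whence $Y=X$ by uniqueness of limits in the (metric) semimartingale topology. This shows $X\in\J^3$.

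I expect the main obstacle to be purely bookkeeping: namely, confirming that $\mu^{\indicator{D_k}\cdot Y}=\mu^{\indicator{D_k}\cdot X}$ follows cleanly from $\mu^Y=\mu^X$, i.e., that multiplying by the indicator of a predictable set acts on the jump measure in the "expected" way (it replaces $\mu$ by its restriction determined by $D_k$, identically for $X$ and $Y$). Once that observation is in place, every other step is a direct appeal to a result already established: stability of $\J^2$ under sigma-stopping, the convergence $\indicator{D_k}\cdot X\to X$ from Lemma~\ref{L:190622.1}\ref{L:190622.1.iii}, and completeness/uniqueness of limits for $d_\S$. No delicate estimates are needed, in contrast with Lemma~\ref{L:190622.2}.
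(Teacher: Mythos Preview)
Your proposal is correct and follows essentially the same argument as the paper's proof: both use the stability of $\J^2$ under sigma-stopping to get $\indicator{D_k}\cdot Y\in\J^2$, invoke the strong pure-jump property of $\indicator{D_k}\cdot X$ to force $\indicator{D_k}\cdot X=\indicator{D_k}\cdot Y$, and then pass to the limit via Lemma~\ref{L:190622.1}\ref{L:190622.1.iii}. The only cosmetic difference is that the paper phrases the last step as a contradiction (if $Y\neq X$ then some $\indicator{D_k}\cdot X\neq\indicator{D_k}\cdot Y$), whereas you argue directly; the logical content is identical.
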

\begin{proof}
	By assumption there exists a nondecreasing sequence $(D_k)_{k \in \N}$ of predictable sets such that $\bigcup_{k \in \N} D_k = \Omega \times [0, \infty)$ and $\indicator{D_k} \cdot X$ is a strong pure-jump process. 
		Assume there exists a pure-jump process $Y$ with $\mu^Y = \mu^X$ and $Y_0 = X_0$ but $Y \neq X$. Since $\lim_{k \uparrow \infty}  (\indicator{D_k} \cdot X) = X - X_0$  and  $\lim_{k \uparrow \infty} (\indicator{D_k} \cdot Y) = Y - Y_0$ in the $\S$--topology thanks to Lemma~\ref{L:190622.1}\ref{L:190622.1.iii}, there exists some $k \in \N$ such that $\indicator{D_k} \cdot X  \neq \indicator{D_k} \cdot Y$. This, however, contradicts the assumption since $\indicator{D_k} \cdot Y \in \J^2$ for each $k \in \N$ because $\J^2$ is stable under sigma-stopping.
\end{proof}

\subsection{Proof of Theorem~\ref{T:1}\ref{T:1.ii}}   \label{SS:5.2}
In this subsection we argue that $\J^i$ is closed under stochastic integration for all $i \in \{1, 2, 3, 4, 6\}$. First, the cases $i = 1$ and $i = 6$ are clear. The case $i=4$ follows from  Corollary~\ref{C:190730}.

For the case $i=2$, assume that $X \in \J^2$ and fix $\zeta \in L(X)$. We need to argue that $\zeta \cdot X \in \J^2$. Thanks to Theorem~\ref{T:1}\ref{T:1.i} (see also Lemma~\ref{L:190623.1}), we may assume that $|\zeta|$ is bounded. The statement then follows directly from the definition of $\S$--topology.

The remaining case $i=3$ follows from the next lemma.
\begin{lemma}  \label{L:190730.1}
	 Let $X \in \J^3$ and $\zeta \in L(X)$.  If $Y \in \J^2$ is a pure-jump process with $\mu^{Y} = \mu^{\zeta \cdot X}$ then $Y = \zeta \cdot X$. 
\end{lemma}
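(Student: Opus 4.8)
The plan is to prove the equivalent statement that $\zeta\cdot X\in\J^3$; since $(\zeta\cdot X)_0=0$, this amounts to: every $Y\in\J^2$ with $\mu^Y=\mu^{\zeta\cdot X}$ and $Y_0=0$ equals $\zeta\cdot X$. The first move is a reduction on the integrand. As $\J^3$ is closed under sigma-localization (Lemma~\ref{L:190623.2}), and as $\indicator{D}\cdot(\zeta\cdot X)=(\indicator{D}\zeta)\cdot X$ for any predictable set $D$ by associativity of the stochastic integral, the predictable sets $D_m:=\{1/m\le|\zeta|\le m\}\cup\{\zeta=0\}$ increase to $\Omega\times[0,\infty)$ and reduce the task to integrands $\zeta$ for which both $\zeta$ and $\rho:=\indicator{S}/\zeta$ are bounded, where $S:=\{\zeta\ne0\}$. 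We record the pointwise identities $\zeta\rho=\rho\zeta=\indicator{S}$ and $\zeta\indicator{S}=\zeta$, and note that $\rho\in L(Y)$ and $\indicator{S^c}\in L(X)$ because these processes are bounded.

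Fix such a $Y$; since the jump measure of a semimartingale determines its jump process, $\mu^Y=\mu^{\zeta\cdot X}$ gives $\Delta Y=\zeta\,\Delta X$. The heart of the argument is to feed the auxiliary process
\[
	Z:=X_0+\indicator{S^c}\cdot X+\rho\cdot Y
\]
into the defining property of $\J^3$. First, $Z\in\J^2$: the summands $\indicator{S^c}\cdot X$ and $\rho\cdot Y$ lie in $\J^2$ by Theorem~\ref{T:1}\ref{T:1.ii} in the already-proved case $i=2$; the quadratic covariation $[\indicator{S^c}\cdot X,\rho\cdot Y]$ has continuous part $(\indicator{S^c}\rho)\cdot[X,Y]^c=0$ (using $[X,X]^c=0$, as $X\in\J^1$, and Kunita--Watanabe) and jumps $\Delta(\indicator{S^c}\cdot X)\,\Delta(\rho\cdot Y)=\indicator{S^c}\,\Delta X\cdot\rho\,\Delta Y=\indicator{S^c}\indicator{S}(\Delta X)^2=0$, so $\indicator{S^c}\cdot X+\rho\cdot Y\in\J^2$ by Corollary~\ref{C:190830}, and adding the $\sigalg{F}_0$-measurable constant $X_0$ keeps us in $\J^2$. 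Next, $Z_0=X_0$ and $\Delta Z=\indicator{S^c}\,\Delta X+\rho\,\Delta Y=\Delta X$, so $\mu^Z=\mu^X$. Since $X\in\J^3$ this forces $Z=X=X_0+\indicator{S^c}\cdot X+\indicator{S}\cdot X$, hence $\rho\cdot Y=\indicator{S}\cdot X$. Applying $\zeta\cdot{}$ and using associativity, $\indicator{S}\cdot Y=(\zeta\rho)\cdot Y=\zeta\cdot(\rho\cdot Y)=\zeta\cdot(\indicator{S}\cdot X)=(\zeta\indicator{S})\cdot X=\zeta\cdot X$. On the other hand $\indicator{S^c}\cdot Y$ has jumps $\indicator{S^c}\zeta\,\Delta X=0$, so it is a continuous semimartingale lying in $\J^2$ with zero initial value and is therefore identically zero; adding the two pieces, $Y=Y_0+\indicator{S}\cdot Y+\indicator{S^c}\cdot Y=\zeta\cdot X$, as required.

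The step I expect to be the main obstacle is making $\rho\cdot Y$ legitimate: in general $\rho=\indicator{S}/\zeta$ is unbounded, and an element $Y\in\J^2$ may carry a nontrivial continuous drift against which $\rho$ need not be integrable, so $\rho\cdot Y$ may fail to exist. This is precisely the difficulty removed by the opening reduction, which arranges $\rho$ to be bounded; the remaining ingredients --- associativity of the stochastic integral and closedness of $\J^2$ under stochastic integration (the case $i=2$ of Theorem~\ref{T:1}\ref{T:1.ii}, established earlier in this subsection) --- are then routine.
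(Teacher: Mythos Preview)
Your proof is correct and follows essentially the same route as the paper: form the auxiliary process $Z=X_0+\indicator{\{\zeta=0\}}\cdot X+(\indicator{\{\zeta\ne0\}}/\zeta)\cdot Y$, check $Z\in\J^2$ with $\mu^Z=\mu^X$ via Corollary~\ref{C:190830} and the already-proved closedness of $\J^2$ under stochastic integration, invoke $X\in\J^3$ to get $Z=X$, and unwind to obtain $Y=\zeta\cdot X$. The one substantive addition is your opening reduction through $\J^3=\J^3_\sigma$ to the case where both $\zeta$ and $\rho=\indicator{\{\zeta\ne0\}}/\zeta$ are bounded, which cleanly justifies $\rho\in L(Y)$; the paper simply writes $(\indicator{\{\zeta\ne0\}}/\zeta)\cdot Y$ without this step.
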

\begin{proof}
	Note that 
	\[
		Z = \left(\indicator{\{\zeta \neq 0\}} \frac{1}{\zeta}\right)\cdot Y + \indicator{\{\zeta = 0\}} \cdot X
	\]
	 satisfies $\mu^Z = \mu^{X}$. 
	 Moreover, $Z$ is a pure-jump process thanks to Corollary~\ref{C:190830} in conjunction with the closedness of $\J^2$ under stochastic integration. Since $X \in \J^3$ we get $Z = X$.
This again yields that 
\[
	Y = \indicator{\{\zeta = 0\}} \cdot Y + \indicator{\{\zeta \neq 0\}} \cdot Y = \indicator{\{\zeta = 0\}} \cdot Y + \zeta \cdot Z =  \indicator{\{\zeta = 0\}} \cdot Y  + \zeta \cdot X.
\]
We conclude after observing that $\mu^{\indicator{\{\zeta = 0\}} \cdot Y} = 0$ and $\indicator{\{\zeta = 0\}} \cdot Y \in \J^2$, hence $\indicator{\{\zeta = 0\}} \cdot Y = 0$. The last step relies on the fact that if $X \in \J^2$ and $\mu^X = 0$ then $X = 0$; i.e., a pure-jump process that has no jumps has to equal the zero process.
\end{proof}

\subsection{Proof of Theorem~\ref{T:1}\ref{T:1.iii}}   \label{SS:5.3}
In this subsection we argue that 
if $\Qu$ is locally absolutely continuous with respect to $\P$ we have $\J^i(\P) \subset \J^i(\Qu)$ for all $i \in \{1, \ldots, 6\}$. The cases $i=1$, $i = 5$, and $i = 6$ are clear. The case $i = 2$ follows from Lemma~\ref{L:190622.1}\ref{L:190622.1.vi}. The case $i = 4$ is a consequence of Proposition~\ref{P:190729.2} and Remark~\ref{R:190405}.

The remaining case $i=3$ follows from Lemma~\ref{L:FlightToBeirut}. It requires the following result regarding the lift of a pure-jump process from $\J^2(\Qu)$ to $\J^2(\P)$.

\begin{lemma}\label{L:FlightToBeirut2}
	Let $Z$ denote a nonnegative $\P$--martingale, $\Qu$  a probability measure that satisfies $\d \Qu / \d \P|_{\sigalg F_{t }} = Z_{t}$ for all $t \geq 0$, and $Y$ an element of $\J^2(\Qu)$.
Assume that there exists some stopping time $\sigma$ such that $Y = Y^\sigma$, $\P$--almost surely, and  $Z^\sigma$ does not hit zero continuously, $\P$--almost surely,
Then there exists a $\P$--semimartingale $Y_{\uparrow} \in \J^2(\P)$ with $Y_{\uparrow} = Y$, $\Qu$--almost surely, and $Y_{\uparrow} = Y_{\uparrow}^\sigma$, $\P$--almost surely.
\end{lemma}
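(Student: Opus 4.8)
The plan is to transfer the approximating sum representation of $Y$ from $\Qu$ to $\P$ by exhibiting a sequence of predictable sets that works simultaneously under both measures. First I would recall that $Y \in \J^2(\Qu)$ means there is a sequence $(\tau_k)_{k\in\N}$ of stopping times with $Y = Y_0 + \sum_k \Delta Y_{\tau_k}\indicator{\lc\tau_k,\infty\lc}$ in the $\Qu$--semimartingale topology; since $Y = Y^\sigma$ $\P$--a.s.\ (hence $\Qu$--a.s.), we may assume $\tau_k \le \sigma$ on $\{\tau_k < \infty\}$, and because the jumps of $Y$ are exhausted by $(\tau_k)$ we may also arrange that these stopping times are disjoint, so that $X^{(k)} := \Delta Y_{\tau_k}\indicator{\lc\tau_k,\infty\lc}$ has pairwise zero quadratic covariation. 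The natural candidate for $Y_\uparrow$ is the $\P$--limit of the partial sums $\sum_{k=1}^n X^{(k)}$; each $X^{(k)}$ is a $\P$--semimartingale (a piecewise constant process with one jump, hence in $\J^6 \subset \J^2(\P)$), so if the partial sums converge in the $\P$--$\S$--topology then Lemma~\ref{L:190622.2} (applied with the $X^{(k)}$, noting their quadratic covariations vanish) gives that the $\P$--limit $Y_\uparrow$ is a $\P$--pure-jump process, i.e.\ $Y_\uparrow \in \J^2(\P)$, and moreover $\sum_k \Delta X^{(k)} = \Delta Y_\uparrow$.

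The heart of the argument is therefore to prove $\P$--convergence of $\sum_{k=1}^n X^{(k)}$. By Lemma~\ref{L:190729}\ref{L:190729.iii} this reduces to two things under $\P$: that $\sum_k [X^{(k)},X^{(k)}] < \infty$ and that $\sum_k B^{X^{(k)}[1],\P}$ converges in the $\P$--$\S$--topology. The quadratic variation condition is measure-independent: $\sum_k [X^{(k)},X^{(k)}] = \sum_{t\le\cdot}(\Delta Y_t)^2 = [Y,Y] - [Y,Y]^c$, which is finite since $Y$ is a $\Qu$--semimartingale and quadratic variation is invariant under equivalent (indeed locally absolutely continuous, up to the usual caveats) measure change — one can see $[Y,Y]$ is the same $\P$-- and $\Qu$--a.s.\ because it is a limit of Riemann sums in probability and $\P\sim\Qu$ on the relevant $\sigma$--fields on $\StochInterval{0,\sigma}$ where $Z^\sigma$ stays positive. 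The drift condition is where the hypotheses on $Z$ enter. Here I would invoke the Girsanov–Lenglart machinery: for each fixed $k$, $B^{X^{(k)}[1],\P}$ is related to $B^{X^{(k)}[1],\Qu}$ via $B^{X^{(k)}[1],\P} = B^{X^{(k)}[1],\Qu} + \langle X^{(k)}[1], (1/Z_-)\cdot Z\rangle^{\Qu}$ (the Girsanov correction term), and I need the sum of these corrections, together with $\sum_k B^{X^{(k)}[1],\Qu}$ (which converges in the $\Qu$--$\S$--topology since $Y \in \J^2(\Qu)$, by Lemma~\ref{L:190729}\ref{L:190729.iii} again, transferred via Lemma~\ref{L:190622.1}\ref{L:190622.1.vi} the other way — actually we only know $\Qu$--convergence of the $X^{(k)}$-sum, which by Lemma~\ref{L:190729}\ref{L:190729.iii} gives $\Qu$--convergence of $\sum_k B^{X^{(k)}[1],\Qu}$), to converge under $\P$.

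The main obstacle, as I see it, is controlling the Girsanov drift correction when $Z$ can hit zero: this is exactly why the hypothesis ``$Z^\sigma$ does not hit zero continuously'' is imposed. On $\StochInterval{0,\sigma}$, the process $1/Z_-$ is well-behaved away from the (necessarily jump-type, by hypothesis) zeros of $Z$, and the assumption that $Y = Y^\sigma$ confines all the action to this interval. I expect the proof to proceed by a further localization: introduce the stopping time $\rho_m = \inf\{t : Z_t \le 1/m\}\wedge\sigma$ (or a sequence thereof exhausting the region where $Z$ is bounded below), on which $\P$ and $\Qu$ are comparable with bounded density ratios, so that $\S$--convergence under $\Qu$ transfers to $\S$--convergence under $\P$ on each $\StochInterval{0,\rho_m}$ directly from the definition of the $\S$--topology (bounded density change scales the supremum in \eqref{eq:190731.2} by a constant); then since $Z^\sigma$ does not hit zero continuously, $\rho_m \uparrow \sigma$ with $Z_{\rho_m} = 0$ only via a jump, and one handles the single terminal jump separately — the jump of $Y$ at $\sigma$, if any, contributes a single bounded piecewise-constant process, manifestly in $\J^2(\P)$. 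Assembling these pieces via Lemma~\ref{L:190622.1}\ref{L:190622.1.i} (local $\S$--convergence implies $\S$--convergence) yields the $\P$--semimartingale $Y_\uparrow$, and $Y_\uparrow = Y$ $\Qu$--a.s.\ because both equal $Y_0 + \sum_k X^{(k)}$ with $\Qu$--convergence, while $Y_\uparrow = Y_\uparrow^\sigma$ $\P$--a.s.\ because every $X^{(k)}$ is stopped at $\sigma$. I would expect to need a careful statement of how the predictable compensator / drift behaves under the measure change restricted to the set where $Z_-$ is bounded away from zero, and the treatment of the boundary jump of $Z$ at its zeros; that bookkeeping, rather than any deep new idea, is the real work.
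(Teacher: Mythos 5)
Your first half coincides with the paper's: reduce to the single-jump processes $X^{(k)}=\Delta Y_{\tau_k}\indicator{\lc\tau_k,\infty\lc}$ and, via Lemma~\ref{L:190729}\ref{L:190729.iii}/Lemma~\ref{L:190622.2}, to the convergence of the $\P$--drifts $B^{(n)}=\sum_{k\le n}B^{X^{(k)}[1]}(\P)$ in the $\S$--topology under $\P$. The gap is in how you propose to obtain that convergence. The Girsanov identity you write runs in the forbidden direction: since $\P$ is in general \emph{not} absolutely continuous with respect to $\Qu$ (that is the whole point of allowing $Z$ to jump to zero), there is no Girsanov/Lenglart formula expressing $\P$--compensators through $\Qu$--compensators via $(1/Z_-)\cdot Z$. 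Your fallback, localization at $\rho_m=\inf\{t:Z_t\le 1/m\}\wedge\sigma$ with ``bounded density ratios on $\lc 0,\rho_m\rc$'', also fails at the decisive spot: the bound $\P(A)\le m\,\Qu(A)$ holds only for events contained in $\{t<\rho_m\}$, whereas the stopped drifts are $\sigalg{F}_{\rho_m\wedge t}$--measurable and include the increment at $\rho_m$ and the event $\{\rho_m\le t\}$, on which the relevant density $Z_{\rho_m\wedge t}$ can be arbitrarily small or zero --- precisely the part of the space that $\Qu$ underweights and where the statement could conceivably fail. Moreover, what must be controlled there is not ``the single terminal jump of $Y$'' (all jumps of $Y$ occur at the $\tau_k$, which may be taken to satisfy $Z_{\tau_k}>0$, so $\Qu$ sees them) but the limiting drift $B$ and the variations of $B-B^{(n)}$: these are predictable compensators whose behaviour on a $\Qu$--null, $\P$--charged set (for instance the $\sigalg{F}_{t_0-}$--measurable drift increment at an accessible time at which $Z$ may jump to zero) is not constrained by any $\Qu$--statement or by your density bound. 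Finally, patching via Lemma~\ref{L:190622.1}\ref{L:190622.1.i} requires stopping times increasing to infinity, while $\rho_m$ only increases to the hitting time of zero, so the region $\{Z_-=0\}$ and the hitting time itself still need a separate argument.

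For comparison, the paper first shows (Lemma~\ref{L:190729}\ref{L:190729.i} under $\P$ together with Lemma~\ref{L:190622.1}\ref{L:190622.1.vi}\&\ref{L:190622.1.v}) that $B^{(n)}$ converges under $\Qu$ to a predictable process $B$ of $\Qu$--a.s.\ finite variation, and then exploits predictability: the first time $\sigma'$ at which $B$ has infinite variation (and likewise the first time the variations of $B-B^{(n)}$ fail to converge) is a \emph{predictable} time; since the corresponding event is $\Qu$--null, $Z$ vanishes on it, and $\E^\P[\Delta Z_{\sigma'}\mid\sigalg{F}_{\sigma'-}]=0$ then forces $Z_{\sigma'-}=0$, i.e.\ $Z^{\sigma}$ would hit zero continuously, contradicting the hypothesis. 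This predictable-time argument is where the assumption on $Z$ actually does its work; in your proposal the assumption is used only to make $\rho_m$ stabilize at the jump time to zero, which does not deliver the $\P$--a.s.\ finite variation of $B$ nor the $\P$--convergence of the drift variations, so the proof is incomplete at its core step.
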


\begin{proof}
Let $(\tau_k)_{k \in \N}$ denote a sequence of stopping times such that $Y = Y_0 + 
\sum_{k = 1}^\infty \Delta Y_{\tau_k} \indicator{\lc \tau_k, \infty\lc}$
in the $\S$--topology under $\Qu$.
 We may now assume that $Y_0 = 0$ and $\{\tau_k < \infty\} \cap \{Z_{\tau_k} = 0\} = \emptyset$ for all $k \in \N$.  With these assumptions in place, we have 
 $[\Delta Y_{\tau_k} \indicator{\lc \tau_k, \infty\lc}, \Delta Y_{\tau_l} \indicator{\lc \tau_l, \infty\lc}] = 0$ for all $k,l \in \N$ with $k \neq l$ and  $\sum_{k=1}^\infty [\Delta Y_{\tau_k} \indicator{\lc \tau_k, \infty\lc}, \Delta Y_{\tau_k} \indicator{\lc \tau_k, \infty\lc}]   < \infty$,  $\P$--almost surely.
 Consider next the  sum $B^{(n)} = \sum_{k = 1}^n B^{\Delta Y_{\tau_k} \indicator{\lc \tau_k, \infty\lc}[1]}(\P)$  for each $n \in \N$. Suppose for the moment that $(B^{(n)})_{n \in \N}$ converges in the $\S$--topology under $\P$.  Then Lemma~\ref{L:190622.2} yields a limiting process $Y_\uparrow \in \J^2(\P)$ and Lemma~\ref{L:190622.1}\ref{L:190622.1.vi} yields that $Y_{\uparrow} = Y$, $\Qu$--almost surely. Clearly, we then also have $Y_{\uparrow} = Y_{\uparrow}^\sigma$, $\P$--almost surely. 
 
 We still need to argue the convergence of the $\P$--drifts $(B^{(n)})_{n \in \N}$ in the $\S$--topology under $\P$. To this end, we will first argue hat $\lim_{n \uparrow \infty} B^{(n)} = B$ in the $\S$--topology under $\Qu$ for some predictable, $\Qu$--almost surely  finite variation  process $B$. To see this, note that $Y \in \J^2(\Qu)$ and Lemma~\ref{L:190729}\ref{L:190729.i} in conjunction with Lemma~\ref{L:190622.1}\ref{L:190622.1.vi}  yield that
 \[
 	\left(\sum_{k = 1}^n  \Delta Y_{\tau_k} \indicator{\lc \tau_k, \infty\lc}[1] \right)_{n \in \N} \qquad \text{and} \qquad \left(\sum_{k = 1}^n  \Delta Y_{\tau_k} \indicator{\lc \tau_k, \infty\lc}[1] - B^{(n)} \right)_{n \in \N}
 \]
 both converge  in the $\S$--topology under $\Qu$. This yields the convergence of $( B^{(n)})_{n \in \N}$ in the $\S$--topology under $\Qu$ to some process $B$, which may be chosen predictable by Lemma~\ref{L:190622.1}\ref{L:190622.1.v}. By Lemma~\ref{L:190622.1}\ref{L:190622.1.iv}, we have $[B,B]^c = 0$, hence $B$ is of finite variation under $\Qu$. We also also have  $B = B^\sigma$, $\Qu$--almost surely, and may  assume that $B = B^\sigma$, $\P$--almost surely.  Similarly, we may assume that $B$ has right-continuous paths, $\P$--almost surely.
 
 Note that the first time $\sigma'$ that $B$ is of infinite variation is predictable since the total-variation process of $B$ is predictable; hence $\E_{\sigma'-}^\P[\Delta Z_{\sigma'}] = 0$ on $\{\sigma' < \infty\}$.  Since $B$ is of finite variation, $\Qu$--almost surely, we conclude that $\sigma' = \infty$, $\P$--almost surely. Hence  $B$ is of finite variation,  $\P$--almost surely, and has left limits. It suffices to argue now that the variations of $(B - B^{(n)})_{n \in \N}$ converge to zero in probability under $\P$. As they do under $\Qu$ and as the first time $\sigma''$ that the variations  of  $(B - B^{(n)})_{n \in \N}$ do not converge to zero is predictable, we may argue again that $\sigma'' = \infty$, $\P$--almost  surely. This yields  $\lim_{n \uparrow \infty} B^{(n)} = B$ in the $\S$--topology under $\P$, concluding the proof.
 \end{proof}

\begin{lemma}\label{L:FlightToBeirut}
	 Let $\Qu$ be a probability measure that is locally absolutely continuous with respect to $\P$ and let $X \in \J^3(\P)$.  If $Y \in \J^2(\Qu)$ is a pure-jump process with $\mu^{Y} = \mu^{X}$, $\Qu$--almost surely, then $Y = X$, $\Qu$--almost surely. 
\end{lemma}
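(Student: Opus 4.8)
The plan is to transport $Y$ to a $\P$--semimartingale by means of Lemma~\ref{L:FlightToBeirut2}, to upgrade the identity of jump measures from ``$\Qu$--a.s.''\ to ``$\P$--a.s.''\ along a suitable localizing sequence, and then to apply the defining property of $\J^3(\P)$. As in Definition~\ref{D.1}, we use that $Y_0 = X_0$.

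First I would bring in the density process $Z$ of $\Qu$ with respect to $\P$, a nonnegative $\P$--martingale, and set $T_n = \inf\{t \geq 0 : Z_t \leq \sfrac{1}{n}\}$. Then $Z > \sfrac{1}{n}$ on $\lc 0, T_n \lc$; the stopped martingale $Z^{T_n}$ reaches $0$, if at all, only by a jump at $T_n$ (since $Z_{T_n-} \geq \sfrac{1}{n}$), so it does not hit zero continuously; and $T_n \uparrow T_0 := \inf\{t : Z_t = 0\}$ with $\Qu(T_0 < \infty) = 0$. Since $\J^2$ is closed under stochastic integration and under adding a single jump (cf.\ Corollary~\ref{C:190830}), we have $Y^{T_n} \in \J^2(\Qu)$; hence Lemma~\ref{L:FlightToBeirut2}, applied with $\sigma = T_n$, supplies a semimartingale $Y^{(n)}_\uparrow \in \J^2(\P)$ with $Y^{(n)}_\uparrow = Y^{T_n}$, $\Qu$--a.s., and $Y^{(n)}_\uparrow = (Y^{(n)}_\uparrow)^{T_n}$, $\P$--a.s.

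The crucial point --- and the one I expect to be the main obstacle --- is that on the predictable set $\lc 0, T_n \lc$ every $\Qu$--evanescent stochastic set is $\P$--evanescent; this I would deduce from the local absolute continuity $\Qu \ll \P$ together with the lower bound $Z > \sfrac{1}{n}$ on $\lc 0, T_n \lc$. Since $\mu^{Y^{(n)}_\uparrow} = \mu^{Y^{T_n}} = \mu^{X^{T_n}}$, $\Qu$--a.s., the stochastic set on which $Y^{(n)}_\uparrow$ and $X$ carry different jumps is contained in the union of the jump times of $Y^{(n)}_\uparrow$ and of $X$ --- hence is a countable union of graphs of stopping times --- and is $\Qu$--evanescent in restriction to $\lc 0, T_n \lc$; by the previous remark it is $\P$--evanescent there, so that $\Delta Y^{(n)}_\uparrow = \Delta X$ on $\lc 0, T_n \lc$, $\P$--a.s. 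I would then form
\[
	W^{(n)} := Y_0 + \indicator{\lc 0, T_n \lc} \cdot Y^{(n)}_\uparrow + \Delta X_{T_n}\,\indicator{\lc T_n, \infty \lc} + \bigl(X - X^{T_n}\bigr).
\]
Its three nonconstant summands are pure-jump processes with pairwise vanishing quadratic covariation --- their jumps sit, respectively, on $\{t < T_n\}$, on $\lc T_n \rc$, and on $\{t > T_n\}$ --- so $W^{(n)} \in \J^2(\P)$ by Corollary~\ref{C:190830}; moreover $W^{(n)}_0 = X_0$ and, by the last display, $\mu^{W^{(n)}} = \mu^X$, $\P$--a.s. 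As $X \in \J^3(\P)$ this forces $W^{(n)} = X$, $\P$--a.s., hence also $\Qu$--a.s. On the other hand, evaluating $W^{(n)}$ under $\Qu$, where $Y^{(n)}_\uparrow = Y^{T_n}$ and $\Delta X_{T_n} = \Delta Y_{T_n}$ on $\{T_n < \infty\}$ $\Qu$--a.s., I obtain $W^{(n)} = Y^{T_n} + (X - X^{T_n})$, $\Qu$--a.s. Comparing the two identities yields $Y^{T_n} = X^{T_n}$, $\Qu$--a.s. Letting $n \to \infty$ and using $T_n \uparrow \infty$ $\Qu$--a.s.\ then gives $Y = X$, $\Qu$--a.s.

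Apart from the evanescence upgrade, the points that will require care are the verification that $Y^{T_n} \in \J^2(\Qu)$, the check that $Z^{T_n}$ reaches zero only by a jump so that Lemma~\ref{L:FlightToBeirut2} is applicable, and the confirmation that the three pieces of $W^{(n)}$ are mutually orthogonal pure-jump processes --- so that Corollary~\ref{C:190830} applies and $\mu^{W^{(n)}}$ is the superposition of the jump measures of the pieces.
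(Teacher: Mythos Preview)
Your proposal is correct and follows essentially the same route as the paper: localize via the hitting times $T_n$ of the density process, lift the stopped $Y$ to a $\P$--semimartingale through Lemma~\ref{L:FlightToBeirut2}, correct the jump at the stopping time, and invoke $X\in\J^3(\P)$.

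The organizational difference is that the paper first reduces to the case $X=X^{\sigma}$ by observing $X^{\sigma_m}\in\J^3(\P)$ (Lemma~\ref{L:190730.1}) and then applying Lemma~\ref{L:190623.2} under $\Qu$; with $X$ already stopped, its correction $Y'=Y_\uparrow+(\Delta X_\sigma-\Delta {Y_\uparrow}_\sigma)\indicator{\lc\sigma,\infty\lc}$ needs only two pieces. You instead keep the full $X$ and re-attach its tail $X-X^{T_n}$ to form your three-piece $W^{(n)}$, which avoids quoting Lemmata~\ref{L:190730.1} and \ref{L:190623.2} but makes the construction a bit longer. One point in your favour: you are explicit about the ``evanescence upgrade'' (that $\Delta Y^{(n)}_\uparrow=\Delta X$ on $\lc 0,T_n\lc$, $\P$--a.s., follows from the $\Qu$--a.s.\ identity because $Z>\sfrac{1}{n}$ there), which the paper's line ``Then we have $\mu^{Y'}=\mu^X$'' leaves implicit. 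Your sketch of this step is correct; to carry it out cleanly, argue stopping-time by stopping-time: for each $\tau$ exhausting the jumps, the event $\{\tau<T_n,\ \Delta Y^{(n)}_{\uparrow\,\tau}\neq\Delta X_\tau\}$ lies in $\sigalg{F}_\tau$, has $\Qu$--probability zero, and on it $Z_\tau>\sfrac{1}{n}$, so $\Qu(\cdot)=\E_\P[Z_{\tau\wedge m}\indicator{\cdot\cap\{\tau\le m\}}]$ forces its $\P$--probability to vanish as well.
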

\begin{proof}
For each $m \in \N$, let $\sigma_m$ be the first time that the nonnegative martingale $(\d \Qu / \d \P|_{\sigalg F_t})_{t \geq 0}$ crosses the level $1/m$. Then $X^{\sigma_m} \in \J^3(\P)$  for each $m \in \N$ by Lemma~\ref{L:190730.1} and $\lim_{m \uparrow \infty} \sigma_m = \infty$, $\Qu$--almost surely. Hence, thanks to Lemma~\ref{L:190623.2} applied under $\Qu$, we may and shall assume from now on that $X = X^{\sigma}$, where $\sigma = \sigma_m$ for some $m \in \N$.  Let $Y \in \J^2(\Qu)$ satisfy $\mu^Y = \mu^X$, $\Qu$--almost surely, and $Y_0 = X_0$. Then $Y = Y^{\sigma}$ and Lemma~\ref{L:FlightToBeirut2} yields $Y_{\uparrow} \in \J^2(\P)$ with $Y_{\uparrow} = Y$, $\Qu$--almost surely, and $Y_{\uparrow} = Y_{\uparrow}^\sigma$, $\P$--almost surely.

Define now the $\P$--semimartingale $Y' = Y_{\uparrow} + (\Delta X_\sigma - \Delta {Y_{\uparrow}}_\sigma)  \indicator{\lc \sigma, \infty\lc}$. Then we have $\mu^{Y'} = \mu^X$ and $Y'= Y$, $\Qu$--almost surely. Since $X \in \J^3(\P)$ we thus have $Y' = X$, $\P$--almost surely, yielding $Y = Y' = X$, $\Qu$--almost surely, as required.
\end{proof}

\subsection{Proof of the set inclusions in \texorpdfstring{\eqref{eq:190731}}{(1.1)}}  \label{SS:5.4}
Lemma~\ref{L:190622.1}\ref{L:190622.1.iv} yields the inclusion $\J^1 \supset \J^2$. The strictness of this inclusion follows from observing, for example, that $X_t = t$ for all $t \geq 0$ satisfies $X \in \J^1 \setminus \J^2$. The inclusions  $\J^2 \supset \J^3$ and $\J^4 \supset \J^5 \supset \J^6$ are clear.  Since the deterministic semimartingale $X = \sum_{k=1}^\infty k^{-2} \indicator{\lc 1/k, \infty\lc}$ satisfies  $X \in \J^5 \setminus \J^6$, we also have the strictness of the last inclusion.

To see $\J^3\supset \J^4$, consider now $X\in\J^5$. By definition of the $\S$--topology every exhausting sequence for $X$ also yields an approximating sequence of stopping times for $X$ in $\J^2$. This shows $\J^5\subset\J^2$, and in fact $\J^5\subset\J^3$. 
Hence Lemma~\ref{L:190623.2} yields $\J^4 = \J^5_\sigma\subset \J^3_\sigma = \J^3$.

\subsection{Proof of Corollary~\texorpdfstring{\ref{C:1}}{1.3} 
and the strictness of the inclusion \texorpdfstring{$\J^2 \supset \J^3$}{J2 supset J3}} \label{SS:C:1}

Let us outline here the proof.
\begin{itemize}
	\item Assume that Corollary~\ref{C:1}\ref{C:1.i} holds. Then Proposition~\ref{P:190729.2} yields that 
$x\in L_\sigma(\mu^X)$. Thanks to Proposition~\ref{P:190330} we moreover have $x \indicator{|x| < 1} \in L_\sigma(\nu^X)$.  Lemma~\ref{L:Kallsen}, the set inclusions of \eqref{eq:190731}, proven in Subsection~\ref{SS:5.4}, and the representation in \eqref{eq:19731.3} yield that Corollary~\ref{C:1}\ref{C:1.ii}--\ref{C:1.iv} also hold. 
	\item  Assume that Corollary~\ref{C:1}\ref{C:1.iv} holds.  Thanks to Proposition~\ref{P:190330}
	and Lemma~\ref{L:Kallsen} we then have $x \in L_\sigma(\mu^X)$ and
	\[
		x \star \mu^X = x \indicator{|x|>1} * \mu^X + x \indicator{|x| \leq 1} * (\mu^X - \nu^X) +  x \indicator{|x|\leq 1} * \nu^X.
	\]
	However, the right hand side of the last display is exactly $X-X_0$, thanks to the representation in  \cite[II.2.34]{js.03} for quadratic pure-jump processes $X$, namely
\begin{align} \label{eq:190827.1}
	X = X_0 + x \indicator{|x|>1} * \mu^X + x \indicator{|x| \leq 1} * (\mu^X - \nu^X) + B^{X[1]}.
\end{align} 
	Hence $X - X_0 = x \star \mu^X$ and Proposition~\ref{P:190729.2} then yields
	 the implication from \ref{C:1.iv} to \ref{C:1.i} in Corollary~\ref{C:1}.
\item Assume that Corollary~\ref{C:1}\ref{C:1.iii} holds.  Then Lemma~\ref{L:190623.3}\ref{L:190623.3.i} below yields that Corollary~\ref{C:1}\ref{C:1.i} also holds.
\item Assume that Corollary~\ref{C:1}\ref{C:1.ii} holds.  Then Lemma~\ref{L:190827.1} below yields that Corollary~\ref{C:1}\ref{C:1.i} also holds.
\item Finally, Lemma~\ref{L:190827.2} shows that the inclusion $\J^2 \supset \J^3$ is usually strict.
\end{itemize}

\begin{lemma} \label{L:190623.3}
	Let $X \in \J^2$ be a pure-jump semimartingale and define the predictable set
	\[
		D =  \left\{( \omega, t): \int |x|  \indicator{|x| \leq 1} F_t^X(\d x) < \infty   \right\}.
	\]
	Then the following statements hold.
	\begin{enumerate}[label={\rm(\roman{*})}, ref={\rm(\roman{*})}] 
	\item \label{L:190623.3.i}  We have $\indicator{D} \cdot X \in \V^{\d}_\sigma$. 
	\item \label{L:190623.3.ii} The following equalities hold.
	\begin{align*}
		D &{}= \left\{( \omega, t): \int x^+  \indicator{x^+ \leq 1}  F_t^X(\d x) < \infty   \right\} \\
		  &{}= \left\{( \omega, t): \int x^-  \indicator{x^- \leq 1}  F_t^X(\d x) < \infty   \right\} ,\qquad \text{$(\P \times \d A^X)$--a.e.}
	\end{align*}
		\item \label{L:190623.3.iii} There exists a predictable process $\beta^X$ with $ |\beta^X| \cdot A^X< \infty$ such that $B^{X[1]} =  \beta^X \cdot A^X$.  Moreover, on $D$ we have $\beta^X = \int x \indicator{|x| \leq 1}  F^X(\d x)$, $(\P \times \d A^X)$--a.e.
	\end{enumerate}
\end{lemma}

\begin{proof}
	For \ref{L:190623.3.i}, thanks to Theorem~\ref{T:1}\ref{T:1.ii} (or the fact that $\J^2$ is stable under sigma-stopping), we may  assume that $X = \indicator{D} \cdot X$. Define now 
		 the predictable sets
		\[
		 D_k = \left\{( \omega, t):\int |x| \indicator{|x| \leq 1}  F_t^X(\d x) \leq k  \right\}, \qquad k \in \N,
	\]
	and   $\overline D_k = D_k \setminus D_{k-1}$ with $D_0 = \emptyset$. Again  $X^{(k)} =  \indicator{\overline D_k} \cdot X$ is a pure-jump process for each $k \in \N$.  
	Moreover, by Proposition~\ref{P:190330} and Lemma~\ref{L:Kallsen}, $x \in L_\sigma(\mu^{X^{(k)}})$ for each $k \in \N$. Since by \eqref{eq:190731} $x \star \mu^{X^{(k)}} \in \J^4 \subset \J^3$ we have $X^{(k)} = x \star \mu^{X^{(k)}}$ for each $k \in \N$. 
Thanks to  Proposition~\ref{P:190729.2} this yields 
	\[
		B^{X^{(k)}[1]} =  \int_0^\cdot \indicator{\overline D_k}(t) \left(\int x \indicator{|x| \leq 1}  F_t^X(\d x) \right) \d A^X_t, \qquad k \in \N.
	\]
	Hence by Lemmata~\ref{L:190622.1}\ref{L:190622.1.iii} and \ref{L:190622.2} we have
	\[
		 \int_0^\cdot \indicator{D_n}(t) \left(\int x \indicator{|x| \leq 1}   F_t^X(\d x) \right) \d A^X_t
	\]
	converges in the $\S$--topology (as $n \uparrow \infty$) to a finite variation process, yielding the result.

	To see \ref{L:190623.3.ii},
		fix $\kappa > 0$  and consider the predictable set
	\[
		 D' =  \left\{( \omega, t): \int x^-  \indicator{x^- \leq 1}   F_t^X(\d x) < \kappa  \right\}.
	\]	
	By symmetry, it suffices now to argue that $ \indicator{D'} x^+  \indicator{x^+ \leq 1}   \nu^X < \infty$.  As above,  we may assume that $X = \indicator{D'} \cdot X$. Then $x^- * \mu^X < \infty$ and \eqref{eq:190731.2} with $\zeta = 1$ yield that also $x^+ * \mu^X < \infty$; hence $x^+  \indicator{x^+ \leq 1} * \nu^X < \infty$ as required.

	For \ref{L:190623.3.iii} 
	Lebesgue's  decomposition of measures yields a predictable set $\widetilde D$ and a predictable process $\beta^X$ such that   $\indicator{\widetilde D} \cdot A^X = 0$, $|\beta^X| \cdot A^X< \infty$, and $B^{X[1]} =   \indicator{\widetilde D} \cdot B^{X[1]} + \beta^X \cdot A^X$. 
	Next, note that $\indicator{\widetilde D} \cdot X$ is also a pure-jump process and $x^2   * \nu^{\indicator{\widetilde D} \cdot X[1]} =  \indicator{\widetilde D} x^2   * \nu^{X[1]} = 0,$ yielding $\indicator{\widetilde D} \cdot X[1] = 0$, hence $ \indicator{\widetilde D} \cdot B^{X[1]}  = 0$.  The second assertion of \ref{L:190623.3.iii} follows directly from \ref{L:190623.3.i} and Proposition~\ref{P:190729.2}.
\end{proof}

\begin{lemma}  \label{L:190827}
	Let $\mu$ be a jump measure with $x^2 * \mu < \infty$. Moreover, let $(f^{(k)})_{k \in \N}$ and $(g^{(k)})_{k \in \N}$ denote two nonincreasing sequences of strictly positive predictable processes. For each $k \in \N$, define 
	\[
		\beta^{(k)}  = \int x \indicator{\{x \in [-1, -g^{(k)}] \cup [f^{(k)}, 1]\} } F(\d x).
	\]
	If $|\beta^{(k)}| \cdot A < \infty$ for each $k \in \N$ and $\beta^{(k)} \cdot A$ converges in the $\S$--topology (as $k \uparrow \infty$) to a process $B$, then there exists a pure-jump process $X$ with $\mu^X = \mu$ and $B^{X[1]} = B$. 
\end{lemma}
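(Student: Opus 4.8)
The plan is to exhibit $X$ as a sum, convergent in the $\S$--topology, of pairwise quadratic-covariation-orthogonal pure-jump processes, and then to read off the conclusions from Lemma~\ref{L:190622.2}. The building blocks will be the large jumps of $\mu$, together with a telescoping family of $\sigma$--localized integrals picking up, one annulus at a time, the jumps of size in $[-1,-g^{(k)}]\cup[f^{(k)},1]$.

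First I would exploit $x^2*\mu<\infty$ to record three facts: (a) $x^2\indicator{|x|\le1}*\mu$ is increasing with jumps bounded by $1$, hence of locally integrable variation, so $x^2\indicator{|x|\le1}*\nu<\infty$ and $\int x^2\indicator{|x|\le1}F_t(\d x)<\infty$ $(\P\times\d A)$--a.e.; (b) $Z^{(0)}:=x\indicator{|x|>1}*\mu$ is a piecewise constant process with finitely many jumps on compacts, so $Z^{(0)}\in\J^6\subset\J^2$ with $B^{Z^{(0)}[1]}=0$; (c) for each $k$, writing $\eta^{(k)}:=x\indicator{\{x\in[-1,-g^{(k)}]\cup[f^{(k)},1]\}}$, one has $\eta^{(k)}\in L_\sigma(\mu)$. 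For (c) I would check the two conditions of Proposition~\ref{P:190330} via Lemma~\ref{L:Kallsen}: $|\eta^{(k)}|^2*\mu\le x^2*\mu<\infty$; on the support of $\eta^{(k)}_t$ one has $|x|\le|x|^2/(f^{(k)}_t\wedge g^{(k)}_t)$, so by (a) $\int|\eta^{(k)}_t(x)|F_t(\d x)<\infty$ $(\P\times\d A)$--a.e.; and $\int|\int\eta^{(k)}_t(x)F_t(\d x)|\,\d A_t$ equals $|\beta^{(k)}|\cdot A$, finite by hypothesis. Thus $Y^{(k)}:=\eta^{(k)}\star\mu\in\V^d_\sigma\subset\J^2$ by Proposition~\ref{P:190729.2}; its jumps lie in $[-1,1]$, so $Y^{(k)}[1]=Y^{(k)}$, and Proposition~\ref{P:190330} together with Lemma~\ref{L:Kallsen} identifies $B^{Y^{(k)}[1]}=\beta^{(k)}\cdot A$, while $\mu^{Y^{(k)}}=\indicator{\{x\in[-1,-g^{(k)}]\cup[f^{(k)},1]\}}\cdot\mu$.

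Because $(f^{(k)})$ and $(g^{(k)})$ are nonincreasing, the predictable sets $E_k:=\{x\in[-1,-g^{(k)}]\cup[f^{(k)},1]\}$ increase with $k$; I would set $A_k:=E_k\setminus E_{k-1}$ (with $E_0:=\emptyset$), a disjoint family inside $\{0<|x|\le1\}$, and $Z^{(k)}:=x\indicator{A_k}\star\mu$. Arguing along a common $\sigma$--localizing sequence, $Z^{(k)}=Y^{(k)}-Y^{(k-1)}\in\V^d_\sigma\subset\J^2$ with $B^{Z^{(k)}[1]}=(\beta^{(k)}-\beta^{(k-1)})\cdot A$ (set $\beta^{(0)}:=0$). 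Then $Z^{(0)},Z^{(1)},Z^{(2)},\dots$ have pairwise disjoint jump supports, hence pairwise vanishing quadratic covariation, and I would feed this sequence to Lemma~\ref{L:190622.2}: the summability $\sum_k[Z^{(k)},Z^{(k)}]=x^2\indicator{|x|>1}*\mu+x^2\indicator{\bigcup_k A_k}*\mu\le x^2*\mu<\infty$ holds, and the partial sums $\sum_{k=1}^n B^{Z^{(k)}[1]}=\beta^{(n)}\cdot A$ converge to $B$ in the $\S$--topology by hypothesis (while $B^{Z^{(0)}[1]}=0$). Hence condition~\ref{L:190622.2.I} of Lemma~\ref{L:190622.2} is met, and the lemma produces a pure-jump process $X=Z^{(0)}+\sum_{k\ge1}Z^{(k)}$ with $B^{X[1]}=B$ and $\Delta X=\sum_k\Delta Z^{(k)}$; since the jump supports of the $Z^{(k)}$ partition $\R\setminus\{0\}$ $\mu$--a.e., this last identity forces $\mu^X=\mu$.

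The step I expect to be the main obstacle is the bookkeeping of the jumps that the annuli $E_k$ miss. If $f^{(k)}$ or $g^{(k)}$ does not tend to zero, then $\bigcup_k E_k$ omits the block $(-g^{(\infty)},f^{(\infty)})\setminus\{0\}$ (with $f^{(\infty)}:=\lim_k f^{(k)}$, $g^{(\infty)}:=\lim_k g^{(k)}$), and the construction above only yields $\mu^X$ equal to $\mu$ restricted to the rest. Those residual jumps must be supplied as one additional summand, the quasi-left-continuous purely discontinuous local martingale $x\indicator{(-g^{(\infty)},f^{(\infty)})}\indicator{\{\nu(\{\cdot\})=0\}}*(\mu-\nu)$, which is well defined by fact~(a) and has vanishing $[1]$--drift, together with a further $\V^d_\sigma$ piece absorbing the jumps of $\mu$ on $(-g^{(\infty)},f^{(\infty)})$ at the fixed times where $\nu$ charges a single instant. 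The delicate points are (i) showing that this compensated small-jump martingale is itself a pure-jump process — not automatic, as the compensated Poisson process shows, but obtainable by exhausting its totally inaccessible jump times and applying Lemma~\ref{L:190729}\ref{L:190729.i} with $[W,W]<\infty$, in the spirit of Proposition~\ref{P:190729} — and (ii) checking that these extra pieces, whose jump support is exactly the omitted block and which therefore stay quadratic-covariation-orthogonal to the $Z^{(k)}$, do not perturb $B^{X[1]}=B$. Once this is in place, Lemma~\ref{L:190622.2} applies to the enlarged sequence and delivers the claim.
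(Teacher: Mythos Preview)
Your core argument---telescoping annuli $E_k\setminus E_{k-1}$, each giving a piece in $\V^d_\sigma\subset\J^2$, then feeding the orthogonal family into Lemma~\ref{L:190622.2}---is exactly the paper's approach. Your inclusion of $Z^{(0)}=x\indicator{|x|>1}*\mu\in\J^6$ is a correct addition that the paper's terse proof simply omits.

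The genuine gap is the residual block, and your proposed fix cannot be completed. The compensated integral $W=x\indicator{(-g^{(\infty)},f^{(\infty)})}\indicator{\{\nu(\{\cdot\})=0\}}*(\mu-\nu)$ is not in $\J^2$ in general, and no other pure-jump patch can always be found: take $A_t=t$, $F(\d x)=x^{-2}\indicator{0<x<1}\,\d x$, and $f^{(k)}\equiv g^{(k)}\equiv1$. Then $\beta^{(k)}=0$, the hypotheses are satisfied with $B=0$, yet by Lemma~\ref{L:190623.3}\ref{L:190623.3.ii} there is \emph{no} $X\in\J^2$ whatsoever with $\mu^X=\mu$, because $\int x^-\indicator{x^-\le1}F(\d x)=0<\infty$ while $\int x^+\indicator{x^+\le1}F(\d x)=\infty$. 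So the lemma as literally stated is false when the sequences do not tend to zero, and your obstacle paragraph is chasing something unfixable. The paper's proof has the same lacuna; it is silently relying on the only place the lemma is invoked (the proof of Lemma~\ref{L:190623.4}), where one has already reduced to $\indicator{|x|>1}*\mu=0$ and arranged $f^{(k)},g^{(k)}\to0$, so the residual is empty. Under those implicit hypotheses your argument and the paper's coincide and are complete.
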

\begin{proof}
	Note that
	\[
		 \int |x| \indicator{\{x \in [-1, -g^{(k)}] \cup [f^{(k)}, 1] \}} F(\d x) < \infty, \qquad \text{$(\P \times \d A)$--a.e.}, \qquad k \in \N,
	\]
	by the strict positivity of $g^{(k)}$ and $f^{(k)}$. 
	Hence by assumption and Propositions~\ref{P:190330} and \ref{P:190729.2}, 
	$x \indicator{|x| = 1}, x \indicator{\{x \in (-g^{(k-1)}, -g^{(k)}] \cup [f^{(k)}, f^{(k-1)}) \}} \in L_\sigma(\mu)$, $X^{(0)} = x \indicator{|x| = 1} \star \mu \in \V^{\d}_\sigma \subset \J^2$,  and
	\[
		X^{(k)} = x \indicator{\{x \in (-g^{(k-1)}, -g^{(k)}] \cup [f^{(k)}, f^{(k-1)}) \}} \star \mu \in \V^{\d}_\sigma \subset \J^2, \qquad k \in \N,
	\]
	where $g^{(0)} = f^{(0)} = 1$. 
	An application of Lemma~\ref{L:190622.2} now concludes.
\end{proof}

The following lemma complements Lemma~\ref{L:190623.3}\ref{L:190623.3.ii}\&\ref{L:190623.3.iii}.  Given a jump measure $\mu$ and a predictable process $\beta$, both satisfying technical conditions, it constructs a pure-jump process $X$ with $\mu^X = \mu$ and drift rate $\beta$ on the predictable set where the jump sizes do not integrate.

\begin{lemma} \label{L:190623.4}
	Let $\mu$ be a jump measure with $x^2 * \mu < \infty$ and
	\begin{align}   \label{eq:190827.3} 
		\left(\limsup_{x \downarrow 0}  x F(\{x\})\right)  \wedge \left(\limsup_{x \uparrow 0}  |x| F(\{x\})\right)  = 0, \qquad \text{$(\P \times \d A)$--a.e.}
	\end{align}	
	Assume that the predictable set
	\[
		D =  \left\{( \omega, t): \int |x|  \indicator{|x| \leq 1} F_t(\d x) < \infty   \right\}
	\]
	satisfies
	\begin{equation}\label{eq:190827.2}
	\begin{split} 
			D &{}= \left\{( \omega, t): \int x^+  \indicator{x^+ \leq 1}  F_t(\d x) < \infty   \right\} \\
			  &{}= \left\{( \omega, t): \int x^-  \indicator{x^- \leq 1}  F_t(\d x) < \infty   \right\} , \qquad \text{$(\P \times \d A)$--a.e.}
	\end{split}
	\end{equation}
	and let $\beta$ denote any nonnegative predictable process such that
	\[
		\int_0^\cdot \indicator{D^c}(t) |\beta_t| \d A_t < \infty.
	\]
	 Then there exists a pure-jump process $X \in \J^2$ such that $\mu^X = \mu$ and
	\[
		B^{X[1]} =  \int_0^\cdot \left(\indicator{D}(t) \int x \indicator{|x| \leq 1}  F_t (\d x) + \indicator{D^c}(t) \beta_t\right) \d A_t.
	\]
\end{lemma}

\begin{proof}
Consider the predictable sets 
\[
	D' = D^c \cap \left\{( \omega, t): \limsup_{x \downarrow 0}  x F_t(\{x\}) = 0\right\};\qquad D'' = D^c\setminus D'.
\]
By Corollary~\ref{C:190830}, symmetry, and Subsection~\ref{SS:3.3} we may assume that $D''=\emptyset$, $\indicator{|x| > 1} * \mu = 0$, $\mu = \indicator{D'}\mu$, and $\beta = \indicator{D'} \beta$.  

To make headway, consider the predictable process
	\begin{align*}
		c &= \inf \left\{\varepsilon > 0:  \varepsilon F\left(\left\{\varepsilon\right\}\right) > 1 \right\} \wedge 2
	\end{align*}
	and note that $c > 0$ by assumption.  Next, consider the process
	\begin{align*}
		d&= \indicator{D'^c} + \indicator{D'} \sup \left\{\varepsilon > 0: \beta +  \int  |x|  \indicator{x \in [-1, -\varepsilon]}  F(\d x) \geq \int x  \indicator{\{x \in [c, 1]\}}  F(\d x)  \right\}.
	\end{align*}	
		Then by \eqref{eq:190827.2}, $d>0$ and since 
	 \[
	 	\left\{(\omega, t):  d_t \geq \varepsilon\right\} =   \left\{(\omega, t): \int  |x|  \indicator{x \in [-1, -\varepsilon]}  F_t(\d x) \geq  \int x  \indicator{\{x \in [c, 1]\}}  F_t(\d x)   - \beta\right\} \in \Pcal, 
	 \]
	 for all $\varepsilon \in (0,1]$, it is easy to see that $d$ is predictable.
	 Next, define the processes
	\begin{align*}
		g^{(k)} &= d \wedge \frac{1}{k}, \qquad k \in \N;\\
		f^{(k)} &= \indicator{D'^c} + \indicator{D'} \sup \left\{\varepsilon > 0: \int x  \indicator{x \in [\varepsilon, 1]}  F(\d x) \geq \beta + \int |x| \indicator{\{x \in [-1, -g^{(k)}]\}} F(\d x) \right\}, \, k \in \N.
	\end{align*}
	Similarly as for $d$ we may argue that $f^{(k)} > 0$ and $f^{(k)}$ is predictable for each $k \in \N$. 
Again by \eqref{eq:190827.2}, we also have $\lim_{k \uparrow \infty} f^{(k)} = 0$ on $D'$.  Note that we also have $f^{(k)} < c$ for each $k \in \N$.
	 
	 Next, define
	 	\[
		\beta^{(k)}  = \int x \indicator{\{x \in [-1, -g^{(k)}] \cup [f^{(k)}, 1] \}} F(\d x), \qquad k \in \N.
	\]
	Since 	
	\[
		\beta^{(k)} \in \left[\beta, \beta +  f^{(k)} F\left(\left\{f^{(k)}\right\}\right)\right] \subset [\beta, \beta + 1], \qquad k \in \N,
	\]
	we have $|\beta^{(k)}| \cdot A < \infty$ and
	\[
		\int_0^\cdot \beta^{(k)} \d A_t \in \left[\int_0^\cdot \beta_t \d A_t,  \int_0^\cdot \left( \beta_t + f^{(k)}_t F_t\left(\left\{f^{(k)}_t\right\}\right)  \right)  \d A_t  \right],
		\qquad k \in \N.
	\]

	Since $\lim_{k \uparrow \infty} f^{(k)} = 0$ on $D'$ we also have 
	\[
		\lim_{k \uparrow \infty}  f^{(k)} F\left(\left\{f^{(k)}\right\}\right) = 0
	\] 
	by assumption. Hence,  dominated convergence yields $\lim_{k \uparrow \infty} \beta^{(k)} \cdot A  = \beta \cdot A$ in the $\S$--topology.  
	An application of Lemma~\ref{L:190827} now concludes.
\end{proof}

\begin{lemma} \label{L:190827.1}
	Let $X \in \J^3$ denote a strong pure-jump process such that \eqref{eq:190623} holds. Then $X \in \J^4$. 
\end{lemma}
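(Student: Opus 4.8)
The plan is to split $X$ along the predictable set where its small jumps are absolutely summable and to show that the complementary piece vanishes. Let $D = \{(\omega,t) : \int |x|\indicator{|x|\leq1}F^X_t(\d x) < \infty\}$ be the predictable set from Lemma~\ref{L:190623.3}. Since $X \in \J^3 \subset \J^2$, Lemma~\ref{L:190623.3}\ref{L:190623.3.i} yields $\indicator{D}\cdot X \in \V^d_\sigma = \J^4$; as $\J^4$ is unchanged by adding an $\sigalg{F}_0$-measurable initial value, it therefore suffices to prove that $X' := \indicator{D^c}\cdot X$ equals the zero process, for then $X = X_0 + \indicator{D}\cdot X \in \J^4$. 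Now $X' \in \J^3$ because $\J^3$ is closed under stochastic integration (Theorem~\ref{T:1}\ref{T:1.ii}); moreover $\mu^{X'} = \indicator{D^c}\cdot\mu^X$, so we may disintegrate $\nu^{X'}(\d t,\d x) = F^X_t(\d x)\,\indicator{D^c}(t)\,\d A^X_t$. In particular $\P\times\d A^{X'}$ is carried by $D^c$, so $\int|x|\indicator{|x|\leq1}F^{X'}_t(\d x) = \infty$ holds $(\P\times\d A^{X'})$-a.e.\ (that is, the analogue of $D$ for $X'$ is $(\P\times\d A^{X'})$-null), and \eqref{eq:190623}, being inherited from $X$ on $D^c$, continues to hold for $X'$.

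Next I would apply Lemma~\ref{L:190623.4} to the jump measure $\mu = \mu^{X'}$. Its hypotheses are met: $x^2 * \mu^{X'} \leq x^2 * \mu^X < \infty$ since $X' \in \J^1$; condition \eqref{eq:190827.3} is the inherited \eqref{eq:190623}; and condition \eqref{eq:190827.2} is exactly Lemma~\ref{L:190623.3}\ref{L:190623.3.ii} for $X' \in \J^2$. Because the analogue of $D$ for $X'$ is $(\P\times\d A^{X'})$-null, the $\indicator{D}$-term in the drift formula of Lemma~\ref{L:190623.4} drops out, so running the lemma once with $\beta \equiv 0$ and once with $\beta \equiv 1$ (the second choice admissible since $A^{X'} := \indicator{D^c}\cdot A^X$ is finite-valued) produces pure-jump processes $Y^0, Y^1 \in \J^2$ with $\mu^{Y^0} = \mu^{Y^1} = \mu^{X'}$, $Y^0_0 = Y^1_0 = 0 = X'_0$, $B^{Y^0[1]} = 0$, and $B^{Y^1[1]} = A^{X'}$. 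Since $X' \in \J^3$, the defining property of strong pure-jump processes forces $Y^0 = X' = Y^1$; comparing the corresponding drifts gives $A^{X'} = B^{Y^1[1]} = B^{Y^0[1]} = 0$.

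Finally, $A^{X'} = 0$ gives $\nu^{X'} = 0$, so $(1\wedge x^2)*\mu^{X'} = (1\wedge x^2)*(\mu^{X'}-\nu^{X'})$ is a nonnegative local martingale starting at zero, hence identically zero; thus $\mu^{X'} = 0$, and a pure-jump process with no jumps is the zero process (the remark following Lemma~\ref{L:190730.1}), whence $X' = 0$ and $X = X_0 + \indicator{D}\cdot X \in \V^d_\sigma = \J^4$, as claimed. I expect the only delicate point to be the bookkeeping in the first paragraph — checking that the compensator of $\indicator{D^c}\cdot X$ may indeed be written as $F^X_t(\d x)\indicator{D^c}(t)\,\d A^X_t$, so that $\P\times\d A^{X'}$ lives on $D^c$, the analogue of $D$ for $X'$ degenerates, and the hypotheses \eqref{eq:190623} and \eqref{eq:190827.2} transfer verbatim to $\mu^{X'}$. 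With that in hand, the two applications of Lemma~\ref{L:190623.4} and the final degeneracy argument for $\mu^{X'}$ are routine.
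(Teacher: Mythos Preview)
Your proof is correct and uses essentially the same mechanism as the paper: apply Lemma~\ref{L:190623.4} twice with different choices of~$\beta$ and invoke the uniqueness in $X\in\J^3$ to force the set where \eqref{eq:190828} fails to be $(\P\times\d A^X)$-null. The paper packages this as a direct contradiction on~$\mu^X$ (taking $\beta\in\{1,-1\}$, the negative value being admissible by the symmetry already used in the proof of Lemma~\ref{L:190623.4}), whereas you first peel off $\indicator{D}\cdot X\in\J^4$ via Lemma~\ref{L:190623.3}\ref{L:190623.3.i} and then show $\indicator{D^c}\cdot X=0$ (taking $\beta\in\{0,1\}$, which stays literally within the nonnegativity hypothesis); the final passage from $A^{X'}=0$ to $\mu^{X'}=0$ is not strictly needed---$A^{X'}=0$ already says $D^c$ is $(\P\times\d A^X)$-null, which is \eqref{eq:190828}---but it is harmless and the argument is sound.
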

\begin{proof}
	We only need to argue that \eqref{eq:190828} holds.  Recall Lemma~\ref{L:190623.3}\ref{L:190623.3.ii} and apply Lemma~\ref{L:190623.4} with $\mu = \mu^X$ and $\widetilde \beta = 1$ and $\overline \beta = -1$. If  \eqref{eq:190828} did not hold then we would obtain two pure-jump processes $\widetilde X$ and $\overline X$ with $\widetilde X \neq \overline X$ but with the same jump measures, contradicting the fact that  $X \in \J^3$. This concludes the proof.
\end{proof}

\begin{lemma} \label{L:190827.2}
	Assume that the filtered probability space is large enough so that it supports a probability measure $\mu$ that satisfies \eqref{eq:190827.3}, \eqref{eq:190827.2}, and
	\begin{align*}
		\P\left[\int_0^\cdot \indicator{\{\int |x| \indicator{|x| \leq 1}  F_t(\d x) = \infty\}} \d A_t > 0\right] > 0.
	\end{align*}	
	  Then $\J^2 \neq \J^3$.
\end{lemma}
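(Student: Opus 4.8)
The plan is to exhibit two distinct pure-jump semimartingales that share the same jump measure $\mu$ and the same initial value. Since a process in $\J^3$ is by definition the \emph{unique} pure-jump process with its jump measure and initial value, producing such a pair immediately yields $\J^2 \supsetneq \J^3$, and in particular $\J^2 \neq \J^3$. The two processes will be produced by Lemma~\ref{L:190623.4}, applied twice to the same jump measure $\mu$ but with two different choices of the drift-rate process $\beta$ on the predictable set where the small jumps fail to be absolutely summable.

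In detail, set
\[
	D = \left\{(\omega,t): \int |x|\,\indicator{|x| \leq 1}\,F_t(\d x) < \infty\right\},
\]
so that, by the hypotheses \eqref{eq:190827.3} and \eqref{eq:190827.2} (and after arranging, if necessary, that $x^2 * \mu < \infty$), Lemma~\ref{L:190623.4} is applicable to $\mu$, while the third hypothesis of the present lemma says precisely that $\P\left[\int_0^\cdot \indicator{D^c}(t)\,\d A_t > 0\right] > 0$. First I would apply Lemma~\ref{L:190623.4} with $\beta \equiv 0$ to obtain a pure-jump process $\widetilde X \in \J^2$, normalized so that $\widetilde X_0 = 0$, with $\mu^{\widetilde X} = \mu$ and
\[
	B^{\widetilde X[1]} = \int_0^\cdot \indicator{D}(t) \left(\int x\,\indicator{|x| \leq 1}\,F_t(\d x)\right) \d A_t.
\]
Then I would apply Lemma~\ref{L:190623.4} once more, now with the nonnegative predictable process $\beta = \indicator{D^c}$ (for which $\int_0^\cdot \indicator{D^c}(t)\,\d A_t < \infty$), to obtain a pure-jump process $\overline X \in \J^2$, normalized so that $\overline X_0 = 0$, with $\mu^{\overline X} = \mu$ and
\[
	B^{\overline X[1]} = \int_0^\cdot \left(\indicator{D}(t) \int x\,\indicator{|x| \leq 1}\,F_t(\d x) + \indicator{D^c}(t)\right) \d A_t.
\]

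It then remains to check that $\widetilde X \neq \overline X$. Since $\mu^{\widetilde X} = \mu^{\overline X}$ we have $\widetilde X - \overline X = \widetilde X[1] - \overline X[1]$, a special semimartingale whose drift equals, by linearity of the canonical decomposition, $B^{\widetilde X[1]} - B^{\overline X[1]} = -\int_0^\cdot \indicator{D^c}(t)\,\d A_t$; by the third hypothesis this is not indistinguishable from the zero process, so $\widetilde X[1] \neq \overline X[1]$ and hence $\widetilde X \neq \overline X$. Thus $\widetilde X \in \J^2$ but $\widetilde X \notin \J^3$, the process $\overline X$ serving as a witness, and so $\J^2 \supsetneq \J^3$, in particular $\J^2 \neq \J^3$. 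I do not expect a genuine obstacle here: all the content sits in Lemma~\ref{L:190623.4}. The only points needing a line of care are that $\beta \equiv 0$ and $\beta = \indicator{D^c}$ are admissible inputs for that lemma (nonnegativity, and the trivial integrability of $\indicator{D^c}\beta$ against $\d A$), and that the two prescribed drifts genuinely differ --- which is exactly where the hypothesis $\P[\int_0^\cdot \indicator{D^c}(t)\,\d A_t > 0] > 0$ enters, namely the requirement that the set on which the small jumps of $\mu$ fail to be absolutely summable not be $(\P \times \d A)$--negligible.
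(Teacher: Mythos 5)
Your proof is correct and follows essentially the paper's own argument: the paper likewise concludes by applying Lemma~\ref{L:190623.4} twice with two different drift rates (it takes $\widetilde \beta = 1$ and $\overline \beta = -1$ rather than your $0$ and $\indicator{D^c}$) to obtain two distinct pure-jump processes sharing the jump measure $\mu$, which precludes membership in $\J^3$. Your choices of $\beta$ are if anything slightly cleaner, since both are nonnegative as Lemma~\ref{L:190623.4} formally requires, and you spell out the drift-comparison step the paper leaves implicit.
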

\begin{proof}
	As in the proof of Lemma~\ref{L:190827.1}, consider the two predictable processes $\widetilde \beta = 1$ and $\overline \beta = -1$ and conclude by applying Lemma~\ref{L:190623.4} twice.
\end{proof}

As an illustration of Lemma~\ref{L:190827.2} and a preparation for the next subsection, let us now discuss the L\'evy situation by means of the following example. When $X$ is a L\'evy process we abuse the notation to treat $F^X$ as a deterministic measure over $\R$ rather than a stochastic process. 
\begin{example}
 Let $X$ be an $\alpha$--stable L\'evy process without Brownian component. Specifically, take $F^X(\d x) = \indicator{x \neq 0} |x|^{-1-\alpha} \d x$ for all $x \in \R$ with $0<\alpha<2$ and $A_t^X = t$ for all $t \geq 0$. Observe that  $X\in\J^4$ is equivalent to $X\in\J^5$ since $X$ is L\'evy. Let us now write $\beta = B^{X[1]}_t/t$, where $t > 0$, for the drift rate of $X$.
\begin{itemize}
\item  If $0<\alpha<1$ and $\beta = \int x \indicator{|x| \leq 1}  F^X(\d x)=0$  then $X$ belongs to $\J^5\setminus\J^6$.
\item  If $0<\alpha<1$ and $\beta  \neq \int x \indicator{|x| \leq 1}  F^X(\d x)$  then $X$ belongs to $\J^1\setminus\J^2$. 
\item  If $1\leq\alpha<2$ then $X$ belongs to $\J^2\setminus\J^3$ for any value of $\beta$.\qed
\end{itemize}
\end{example}

\subsection{Proof of \texorpdfstring{\eqref{eq:190826}}{(1.2)}}   \label{SS:5.6}
On finite probability spaces we have $\J^2 = \J^5$. However, in general, this is not true. Lemma~\ref{L:190827.2} already asserts that $\J^2 \neq \J^3$ as long as the probability space is large enough.  The process $X$ of the introduction shows that usually we have $\J^4 \neq \J^5$. Example~\ref{ex:190828} below illustrates that 
$\J^3 \neq \J^4$ is also possible.

Theorem~\ref{T:1} asserts that all these inequalities may hold simultaneously for some probability space. To see that such a probability space exists it suffices to piece together these three examples. For example, take the product of a probability space that allows for a process as in Example~\ref{ex:190828} and another probability space that satisfies the assumptions of Lemma~\ref{L:190827.2} and additionally allows for a process as in the introduction.
As filtration consider the one of Example~\ref{ex:190828} between time 0 and 1 and afterwards allow the filtration to be large enough to allow for the other examples.

Example~\ref{ex:190828} requires a few technical prerequisites that we introduce now. Throughout this subsection, $\filt{F}^X$ and $\filt{F}^X_+$ shall denote the natural filtration of a process $X$ and   its right-continuous modification.  Additionally, for purely technical reasons, throughout this subsection we also assume to be on the canonical space as in Dellacherie and Meyer \cite[Definition~IV.95]{dellacherie.meyer.78}.

\begin{lemma} \label{L:190525.1}
	Let $g$ denote a $\{0,1\}$--valued $\filt{F}^X$--predictable function and $X \in \S$ a semimartingale. Then $g(\Delta X)$ is $\filt{F}^X$--optional. Moreover, if $\tau$ is an $\filt{F}^X$--stopping time then $\lc \tau, \infty \rc = \{g(\Delta X)=1\}$ for some $\{0,1\}$--valued $\filt{F}^X$--predictable function $g$.
\end{lemma}
\begin{proof}
First, $g(\Delta X)$ is optional as a composition of appropriately measurable functions. Let $\Ocal^X$ now denote the $\filt{F}^X$--optional sigma algebra.  It suffices to prove that $\Ocal^X \subset \overline \Ocal$, where $\overline \Ocal = \bigcup_g \{g(\Delta X)=1\}$ with the union is taken over all $\{0,1\}$--valued $\filt{F}^X$--predictable functions. First note that $\overline \Ocal$ is a sigma algebra since the maximum of countably many $\{0, 1\}$--valued predictable functions is again predictable. Next, taking $g = \indicator{E}$, with a slight misuse of notation, for $E$ either an event in the $\filt{F}^X$--predictable sigma algebra or in the Borel sigma algebra on $\R$, shows that $\overline \Ocal$ contains the $\filt{F}^X$--predictable sigma algebra and the one generated by $\Delta X$.  Since the $\filt{F}^X$--predictable sigma algebra together with the sigma algebra generated by $\Delta X$ generates $\Ocal^X$ (see \cite[Theorem~IV.97a]{dellacherie.meyer.78}), we have indeed the inclusion $\Ocal^X \subset \overline \Ocal$ and the statement follows.
\end{proof}

\begin{lemma} \label{L:190525.2}
	Assume that $X$ is a L\'evy process.  For an $\filt{F}^X_+$--stopping time $\tau$ we then have $\lc \tau \rc = \{g(\Delta X)=1\}$ for some $\{0,1\}$--valued $\filt{F}^X$--predictable function $g$.
\end{lemma}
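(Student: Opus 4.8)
The plan is to reduce the claim to Lemma~\ref{L:190525.1} by showing that, when $X$ is a Lévy process, replacing $\filt{F}^X$ by its right-continuous modification $\filt{F}^X_+$ alters the predictable $\sigma$-field only on a set that is negligible for our purposes; the sole ingredient particular to Lévy processes will be Blumenthal's $0$--$1$ law.

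First I would record a structural fact valid for an arbitrary filtration: $\Predictable(\filt{F}^X_+)$ is generated by $\Predictable(\filt{F}^X)$ together with the sets $\{0\}\times A$ with $A\in\sigalg{F}^X_{0+}$. Indeed, for $0\le s<t$ and $A\in\sigalg{F}^X_{s+}$ one has $(s,t]\times A=\bigcup_{n}(s+\sfrac{1}{n},t]\times A$ with $A\in\sigalg{F}^X_{s+\sfrac{1}{n}}$, so $(s,t]\times A\in\Predictable(\filt{F}^X)$; hence the only generators of $\Predictable(\filt{F}^X_+)$ that are not already $\filt{F}^X$-predictable live on the slice $\{0\}\times\Omega$, and on $\Omega\times(0,\infty)$ the two $\sigma$-fields coincide exactly. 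Now Blumenthal's $0$--$1$ law for the Lévy process $X$ yields that $\sigalg{F}^X_{0+}$ is $\P$-trivial, so each such extra generator is, up to an evanescent set, either empty or equal to $\{0\}\times\Omega$; consequently $\Predictable(\filt{F}^X_+)$ and $\Predictable(\filt{F}^X)$ agree up to $\P$-evanescence.

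Next I would rerun the argument in the proof of Lemma~\ref{L:190525.1} verbatim, now with $\filt{F}^X_+$ in place of $\filt{F}^X$: the graph $\lc\tau\rc$ is $\filt{F}^X_+$-optional, and since $\filt{F}^X_+$ is the natural filtration of $X$ made right-continuous, \citet[Theorem~IV.79a]{dellacherie.meyer.82} still gives that the $\filt{F}^X_+$-optional $\sigma$-field is contained in the one generated by $\Predictable(\filt{F}^X_+)$ and $\Delta X$. Combined with the previous paragraph, $\lc\tau\rc$ belongs, up to an evanescent set, to the $\sigma$-field generated by $\Predictable(\filt{F}^X)$ and $\Delta X$, which by the proof of Lemma~\ref{L:190525.1} equals $\bigcup_g\{g(\Delta X)=1\}$, the union being over $\{0,1\}$-valued $\filt{F}^X$-predictable $g$. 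To conclude I would produce an honest $\filt{F}^X$-predictable representative $g$: off $\{t=0\}$ the predictable function coming from $\filt{F}^X_+$ is already $\filt{F}^X$-predictable by the second paragraph, while on $\{t=0\}$ one invokes Blumenthal once more, namely that $\{\tau=0\}\in\sigalg{F}^X_{0+}$ has probability $0$ or $1$, to deal with the slice $\{0\}\times\Omega$ directly.

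The main obstacle is precisely this time-$0$ slice: it is the unique place where $\Predictable(\filt{F}^X_+)$ genuinely differs from $\Predictable(\filt{F}^X)$, and it is exactly here that the Lévy hypothesis enters, through Blumenthal's $0$--$1$ law, to collapse the discrepancy (equivalently, the asserted equality of sets is to be read up to a $\P$-evanescent set, i.e.\ up to indistinguishability of the processes $\indicator{\lc\tau\rc}$ and $g(\Delta X)$, which is all the sequel requires). Everything else is the bookkeeping already carried out in Lemma~\ref{L:190525.1}.
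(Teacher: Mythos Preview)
Your structural observation is correct and useful: for \emph{any} filtration $\filt{G}$ one has $\Predictable(\filt{G}_+)=\Predictable(\filt{G})$ on $\Omega\times(0,\infty)$, exactly by the argument you give. However, the proof breaks at the point where you ``rerun Lemma~\ref{L:190525.1} verbatim'' for $\filt{F}^X_+$ and assert that \citet[Theorem~IV.79a]{dellacherie.meyer.82} still yields $\Ocal(\filt{F}^X_+)\subset\sigma(\Predictable(\filt{F}^X_+),\Delta X)$. That result is specifically about the \emph{natural} filtration of a c\`adl\`ag process; it does not apply to $\filt{F}^X_+$, which is in general strictly larger than $\filt{F}^X$. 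On $\Omega\times(0,\infty)$ your two predictable $\sigma$-fields coincide, so $\sigma(\Predictable(\filt{F}^X_+),\Delta X)$ restricted there equals $\Ocal(\filt{F}^X)$, and you are effectively claiming $\Ocal(\filt{F}^X_+)\subset\Ocal(\filt{F}^X)$ up to evanescence. This is true for L\'evy $X$, but not because of Blumenthal at time zero alone: it requires $\sigalg{F}^X_{t+}\subset\sigalg{F}^X_t\vee\nullsets$ for \emph{every} $t\geq 0$, which is precisely the Feller property. Your argument, as written, uses the L\'evy hypothesis only through Blumenthal at $t=0$; note that the identity $\Predictable(\filt{G}_+)|_{(0,\infty)}=\Predictable(\filt{G})|_{(0,\infty)}$ holds for arbitrary $\filt{G}$ and so cannot by itself control $\Ocal(\filt{F}^X_+)$.

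The paper bypasses this issue entirely: since $X$ is Feller, $\filt{F}^X_+$ is contained in the $\P$--augmentation of $\filt{F}^X$, so the $\filt{F}^X_+$--stopping time $\tau$ equals $\P$--a.s.\ some $\filt{F}^X$--stopping time $\overline\tau$, and Lemma~\ref{L:190525.1} applies directly to $\overline\tau$. This is a three-line reduction that uses the full strength of the L\'evy/Feller property at the outset rather than trying to localize it to $t=0$. Your route can be repaired, but only by invoking $\sigalg{F}^X_{t+}\subset\overline{\sigalg{F}^X_t}$ for all $t$ (e.g., via stationary independent increments and Blumenthal at each $t$), at which point it collapses to the paper's argument.
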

\begin{proof}
Since $X$ is Feller, $\filt{F}^X_+$ can be obtained from $\filt{F}^X$ by augmenting it with the null sets of $\sigalg{F}^X_\infty$. Hence there exists an  $\filt{F}^X$--stopping time $\overline \tau$ with $\overline \tau = \tau$, $\P$--almost surely. Thus, an application of Lemma~\ref{L:190525.1} concludes.
\end{proof}

\begin{lemma} \label{L:190626}
	Assume that $X$ is a L\'evy process  with $|\Delta X| \leq 1$ and assume that Y is an $\filt{F}^X_+$--pure-jump process 
	with $\mu^Y = \mu^X$.  Then 
	there exists a nondecreasing sequence $(f^{(n)})_{n \in \N}$ of $\{0,1\}$--valued $\filt{F}^X$--predictable functions such that 
	\begin{align} 
		\int_0^\cdot \left( \int \left(|x| f^{(n)}_t(x)\right) F^X(\d x) \right) \d t &< \infty, \qquad n \in \N; \label{eq:190627.1a} \\
		\lim_{n \uparrow \infty} \int_0^\cdot \left(\int \left(x f^{(n)}_t(x)\right) F^X(\d x) \right) \d t &= B^Y \qquad \text{in the $\S$--topology.} \label{eq:190627.1b}
	\end{align}
\end{lemma}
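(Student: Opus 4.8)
\textbf{Proof proposal for Lemma~\ref{L:190626}.}

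The plan is to reduce the problem to an application of Lemma~\ref{L:190827.1} combined with the structural characterizations already available for pure-jump processes, exploiting the L\'evy structure to replace abstract predictable sets by sets of the form $\{g(\Delta X) = 1\}$ with $g$ an $\filt{F}^X$--predictable $\{0,1\}$--valued function. First I would observe that since $Y$ is an $\filt{F}^X_+$--pure-jump process with $\mu^Y = \mu^X$, Theorem~\ref{T:1}\ref{T:1.iii} and the general theory give $B^Y$ as an $\filt{F}^X_+$--predictable finite variation process, and by Proposition~\ref{P:190729} (or rather by passing to $Y[1] = Y$, using $|\Delta X| \le 1$) it is the drift of $Y$ itself. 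Because $x^2 * \mu^X < \infty$ and $|\Delta X| \le 1$, the process $Y$ is a quasi-left-continuous quadratic pure-jump semimartingale, and $B^Y$ is an absolutely continuous function of $t$ with a deterministic-rate interpretation only after conditioning — but more to the point, $B^Y = \beta^Y \cdot A^Y$ with $A^Y_t = t$ and $\beta^Y$ an $\filt{F}^X_+$--predictable rate (Lemma~\ref{L:190623.3}\ref{L:190623.3.iii}), and by Lemma~\ref{L:190525.2} every $\filt{F}^X_+$--predictable set can be rewritten, up to evanescence, in terms of $\filt{F}^X$--predictable functions of $\Delta X$.

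Next I would run the approximation from the proof of Lemma~\ref{L:190623.3}\ref{L:190623.3.i}: slice $Y$ along the predictable sets $D_k = \{(\omega,t): \int |x|\indicator{|x|\le1} F^Y_t(\d x) \le k\}$ where the jump sizes integrate, so that on each slice $\indicator{\overline D_k} \cdot Y \in \V^d_\sigma$ and its drift is $\int_0^\cdot \indicator{\overline D_k}(t)(\int x\indicator{|x|\le1}F^Y_t(\d x))\,\d t$. On the complementary set $D^c$ the jump sizes fail to be summable; here I would invoke the L\'evy/quasi-left-continuous structure plus the construction in Lemma~\ref{L:190623.4}, which produces, given the technical conditions \eqref{eq:190827.3} and \eqref{eq:190827.2}, a family of truncation levels $g^{(k)}, f^{(k)}$ with the property that $\beta^{(k)} = \int x\indicator{\{x\in[-1,-g^{(k)}]\cup[f^{(k)},1]\}}F^Y(\d x)$ converges to $\beta^Y$ after integration against $A^Y$. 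The indicator functions $\indicator{\{x \in [-1,-g^{(k)}_t]\cup[f^{(k)}_t, 1]\}}$ are then exactly the desired functions $f^{(n)}_t(x)$; they are $\filt{F}^X$--predictable in $(\omega,t)$ by Lemma~\ref{L:190525.2} (replacing the a priori $\filt{F}^X_+$--predictable $g^{(k)}, f^{(k)}$ by $\filt{F}^X$--predictable modifications), Borel in $x$, nondecreasing in $n$ after passing to a common refinement, and they satisfy \eqref{eq:190627.1a} by the strict positivity of the truncation levels and \eqref{eq:190627.1b} by the dominated-convergence argument already carried out in Lemma~\ref{L:190623.4}.

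The main obstacle I anticipate is the measurability bookkeeping: the construction in Lemma~\ref{L:190623.4} yields $\filt{F}^X_+$--predictable truncation processes (since $\beta^Y$ is only $\filt{F}^X_+$--predictable a priori), whereas the statement demands $\filt{F}^X$--predictable functions $f^{(n)}$. Bridging this gap is precisely what Lemma~\ref{L:190525.2} is for — it lets one replace $\filt{F}^X_+$--stopping times, hence $\filt{F}^X_+$--predictable sets built from them, by $\{g(\Delta X)=1\}$ with $g$ being $\filt{F}^X$--predictable — but applying it cleanly requires expressing the super-level sets $\{d_t \ge \varepsilon\}$ and $\{f^{(k)}_t \ge \varepsilon\}$ from Lemma~\ref{L:190623.4} in terms of countably many such sets and checking the monotonicity in $n$ is preserved. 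A secondary point is that \eqref{eq:190827.3} and \eqref{eq:190827.2} must be verified for $\mu = \mu^X = \mu^Y$; the first is automatic because $F^X$ is atomless in the absolutely continuous L\'evy case (or can be reduced to the atomless case by the remark after Corollary~\ref{C:1}), and the second then follows from Lemma~\ref{L:190623.3}\ref{L:190623.3.ii} applied to the pure-jump process $Y$. Once these are in place the rest is a direct transcription of the Lemma~\ref{L:190623.4} argument, so I would keep that part brief.
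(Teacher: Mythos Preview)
Your approach has a genuine gap: it routes through Lemma~\ref{L:190623.4}, which requires the hypothesis \eqref{eq:190827.3} on the atoms of $F^X$, but Lemma~\ref{L:190626} makes no such assumption. Worse, the sole application of Lemma~\ref{L:190626} in the paper is Example~\ref{ex:190828}, where $F^X$ is purely atomic with $x F^X(\{x\}) = 3^k \to \infty$ along the atoms, so \eqref{eq:190827.3} fails decisively. Your parenthetical escape hatch (``$F^X$ is atomless in the absolutely continuous L\'evy case, or can be reduced to the atomless case by the remark after Corollary~\ref{C:1}'') does not help: absolute continuity of $F^X$ is not assumed, and the remark after Corollary~\ref{C:1} only observes that atomlessness implies \eqref{eq:190623}, not that one can reduce to it. So the detour through interval-type truncations $\indicator{\{x\in[-1,-g^{(k)}]\cup[f^{(k)},1]\}}$ simply does not cover the case the lemma is meant for.

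The paper's argument is far more direct and bypasses all of this. Since $Y\in\J^2$, take an exhausting sequence $(\tau_k)_{k\in\N}$ of $\filt{F}^X_+$--stopping times with $Y=\sum_k \Delta X_{\tau_k}\indicator{\lc\tau_k,\infty\lc}$ in the $\S$--topology. Apply Lemma~\ref{L:190525.2} to each $\tau_k$ separately to get a $\{0,1\}$--valued $\filt{F}^X$--predictable function $g^{(k)}$ with $\lc\tau_k\rc=\{g^{(k)}(\Delta X)=1\}$, so that $\Delta X_{\tau_k}\indicator{\lc\tau_k,\infty\lc}=x g^{(k)}(x)*\mu^X$. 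Since the $\tau_k$ have disjoint graphs, one may take $f^{(n)}=\sum_{k=1}^n g^{(k)}$ to be $\{0,1\}$--valued and nondecreasing; \eqref{eq:190627.1a} is then just finiteness of the compensator of the single-jump process $|\Delta X_{\tau_k}|\indicator{\lc\tau_k,\infty\lc}$, and \eqref{eq:190627.1b} is exactly the drift convergence supplied by Lemma~\ref{L:190622.2}. Note in particular that the resulting $f^{(n)}$ need not be indicator functions of intervals in $x$ --- they encode the full predictable structure of the approximating stopping times --- which is precisely what is needed for Example~\ref{ex:190828} and what your interval-based construction cannot deliver.
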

\begin{proof}
Let $(\tau_k)_{k \in \N}$ be an exhausting sequence of $\filt{F}^X_+$--stopping times for the jumps of $Y$ such that $Y = \sum_{k=1}^{\infty} \Delta X_{\tau_k} \indicator{\lc \tau_k, \infty\lc}$
in the $\S$--topology.  
By Lemma~\ref{L:190525.2}, there exists an $\filt{F}^X$--predictable $\{0,1\}$--valued function ${g^{(k)}}$ such that
\[
	\Delta X_{\tau_k} \indicator{\lc \tau_k, \infty\lc} = x {g^{(k)}} (x) * \mu^X, \qquad k \in \N.
\]
 Observe also that 
$$
 \int_0^\cdot \left( \int \left(|x| g^{(k)}_t(x)\right) F^X(\d x) \right) \d t
= B^{|\Delta X_{\tau_k}| \indicator{\lc \tau_k, \infty\lc}} < \infty, \qquad k\in\N.
$$

Since the  elements of $(\tau_k)_{k \in \N}$ have disjoint support we may assume that 
$f^{(n)} = \sum_{k = 1}^n {g^{(k)}}$ is also $\{0,1\}$--valued for each $n \in \N$. Then clearly \eqref{eq:190627.1a} holds and Lemma~\ref{L:190622.2} yields that
\begin{align*}
	\lim_{n \uparrow \infty} \left(\int_0^\cdot \left(\int x f^{(n)}_t(x) F^X(\d x) \right) \d t\right) &= 
	\lim_{n \uparrow \infty}\sum_{k  = 1}^n B^{\Delta X_{\tau_k} \indicator{\lc \tau_k, \infty\lc}} = B^Y
\end{align*}
in the $\S$--topology, yielding \eqref{eq:190627.1b}.
 \end{proof}

\begin{example} \label{ex:190828}
	Let $X$ be a L\'evy process with L\'evy measure 
	\[
		F^X(\d x) = \sum_{k = 1}^\infty k^2 3^{2k} \delta_{1/(k^2 3^k)}(\d x) + \sum_{k = 1}^\infty k^2 3^{2k} \delta_{-1/(k^2 3^k)}(\d x), \qquad x \in \R,
	\]
	and without a drift and Brownian motion component; in particular, $B^{X[1]} = 0$.
	Note that 
	\[
		\int (x^2\wedge |x|) F^X(\d x) = 2  \sum_{k = 1}^\infty k^2 3^{2k} \frac{1}{k^4 3^{2k}} = 2 \sum_{k = 1}^\infty \frac{1}{k^2} < \infty.
	\]
	Moreover, $X$ is a pure-jump process by Lemma~\ref{L:190622.2} with $X^{(k)} = x \indicator{|x| \in (1/(k+1),1/k]} * \mu^X$ for each $k \in \N$. 
	
	Since $\int |x|F^X(\d x) = \infty$ it is clear that $X \notin V_\sigma^d$. However, we claim that $X$ is a strong pure-jump process, i.e., $X \in \J^3$.  Indeed, let $Y$ denote any pure-jump process with $\mu^Y = \mu^X$. Thanks to the canonical  representation of quadratic pure-jump processes in \eqref{eq:190827.1} it suffices to show that $B^Y = 0$.  

	To this end, thanks to Lemma~\ref{L:190626}, there exists a nondecreasing sequence 
	$(f^{(n)})_{n \in \N}$ of\linebreak $\{0,1\}$--valued $\filt{F}^X$--predictable functions such that \eqref{eq:190627.1a} and \eqref{eq:190627.1b} hold.  Assume that $B^Y~\neq~0$. By Lemma~\ref{L:190623.3}\ref{L:190623.3.iii} there exist some $\kappa \in \N$ and a predictable set $D$ such that $\int_0^\cdot \indicator D(t)  |\d B_t^Y| > 0$ and 
	\begin{align} \label{eq:190627.2}
		\indicator{D}  \sup_{n \in \N} \left|  \int \left(x f^{(n)}(x)\right) F^X(\d x) \right| < \frac{3^{\kappa}}{2}, \qquad \text{$(\P \times \d t)$--a.e.} 
	\end{align}
	Consider now the predictable sets 
	\begin{align*}
		\mathcal A^{(n), +}_t       &{}= \left \{k \in \N: f_t^{(n)}\left(\frac{1}{k^2 3^k}\right) = 1   \right\},\qquad t \geq 0,\, n \in \N; \\
		\quad \mathcal A^{(n), -}_t &{}= \left \{k \in \N: f_t^{(n)}\left(-\frac{1}{k^2 3^k}\right) = 1   \right\}, \qquad t \geq 0,\, n \in \N,
	\end{align*}
	along with their symmetric differences 
	\[
		\mathcal A_t^{(n)}  = \left(\mathcal A^{(n), +}_t \setminus \mathcal A^{(n), -}_t\right) \cup \left(\mathcal A^{(n), -}_t \setminus \mathcal A^{(n), +}_t\right), \qquad t \geq 0,\, n \in \N.
	\]
	Thanks to \eqref{eq:190627.1a}, $k^{(n)}_t = \max \mathcal A_t^{(n)} < \infty$ (with $\max \emptyset = 0$), $(\P \times \d t)$--a.e., for each $n \in \N$.  If $k^{(n)}_t = 0$ then  $\int (x f_t^{(n)}(x)) F^X(\d x) = 0$ for all $t \geq 0$ and $n \in \N$.   If $k^{(n)}_t  \in \N$ then 
	\[
		\left|\int \left(x f_t^{(n)}(x)\right) F^X(\d x) \right| \geq 3^{k^{(n)}_t} - \sum_{k = 1}^{k^{(n)}_t - 1} 3^k \geq 3^{k^{(n)}_t} - \frac{ 3^{k^{(n)}_t}}{2} =   \frac{ 3^{k^{(n)}_t}}{2}, \qquad t \geq 0,\, n \in \N.
	\]
 	Hence by \eqref{eq:190627.2}, on $D$, we have $\max \mathcal A^{(n)} < \kappa$ for all $n \in \N$. 
	We have just argued that 
	\[
		\indicator{D} \int \left(\indicator{|x| \leq 1/(\kappa^2 3^\kappa)}  x f^{(n)} (x)  \right) F^X(\d x) = 0, \qquad \text{$(\P \times \d t)$--a.e.},\, n \in \N.
	\]
	Thus 
	\[
		\indicator{D} \cdot B^Y = \lim_{n \uparrow \infty} \int_0^\cdot \left( \indicator{D}(t) \int \left(\indicator{|x| > 1/(\kappa^2 3^\kappa)}  x f^{(n)}_t (x)  \right) F^X(\d x)  \right) \d t = 0
	\]
	in the $\S$--topology since $\indicator{D} \indicator{|x| > 1/(\kappa^2 3^\kappa)} x * \mu^X \in \V^{\d}$, hence a strong pure-jump process. This is a contradiction to the assumption that $\int_0^\cdot \indicator D(t)  |\d B_t^Y| > 0$.  This shows that $X$ is a strong pure-jump process. \qed
	\end{example}

\appendix
\section{\'Emery's semimartingale topology}  \label{S:Emery}

Here we  briefly review the definition and basic facts of the semimartingale topology (in short, $\S$--topology), 
introduced by \'Emery \cite{emery.79}. 

\begin{definition} \label{D:190622}
	Let $(X^{(k)})_{k \in \N} \subset \S$ denote a sequence of semimartingales. We say that this sequence converges to $X \in \S$ in the semimartingale topology (in short, $\S$--topology) if 
		\begin{align} \label{eq:190731.2}
		\lim_{k \uparrow \infty} \left( \sup_{\zeta: |\zeta| \leq 1} \E\left[\left| \zeta_0 X^{(k)}_0 + \zeta \cdot X^{(k)}_t - \zeta_0 X_0 - \zeta \cdot X_t \right| \wedge 1\right] \right) = 0  
	\end{align}
	for all $t \geq 0$, 
	where the supremum is taken over all predictable processes $\zeta$ with $|\zeta| \leq 1$.  \qed
\end{definition}

The space $\S$ equipped with this topology is a complete metric space \cite[Theoreme~1]{emery.79}, say with distance $d_\S$. Note that if a sequence $(X^{(k)})_{k \in \N} \subset \S$ converges in the $\S$--topology it also converges in the sense of uniform convergence on compacts in probability.

\begin{remark} \label{R:190729}
	In contrast to \'Emery \cite{emery.79}, we have not assumed (nor excluded) that the underlying filtration $\filt{F}$ be augmented by the $\P$--null~sets. Nevertheless, the cited results of 	\cite{emery.79}  below can be applied by choosing appropriate process modifications. For example, $\S$ equipped with the $\S$--topology is a complete metric space as any limit (in the augmented filtration) can be identified with an  $\filt{F}$--semimartingale by taking appropriate modifications. See, for example, Perkowski and Ruf \cite[Appendix~A]{Perkowski_Ruf_2014} for a summary of these techniques.
	\qed
\end{remark}

We now collect some well known facts concerning the $\S$--topology.
\begin{lemma}  \label{L:190622.1}
	Let $(X^{(k)})_{k \in \N} \subset \S$ denote a sequence of semimartingales with $X^{(k)}_0  = 0$. Then the following statements hold.
	\begin{enumerate}[label={\rm(\roman{*})}, ref={\rm(\roman{*})}] 
		\item\label{L:190622.1.i} If the sequence $(X^{(k)})_{k \in \N}$ converges locally in the $\S$--topology then it also converges  in the $\S$--topology.
		\item\label{L:190622.1.ii} If $\lim_{k \uparrow \infty} X^{(k)} = X$ in the $\S$--topology for some semimartingale $X \in \S$ and if $D$ is a predictable set then $\lim_{k \uparrow \infty} (\indicator{D} \cdot X^{(k)}) = \indicator{D} \cdot X$ in the $\S$--topology.
		 \item\label{L:190622.1.iii} If $(D_k)_{k \in \N}$ is a nondecreasing sequence of predictable sets  such that $\bigcup_{k \in \N} D_k = \Omega \times [0, \infty)$ and $X$ is a semimartingale with $X_0 =0$ then $\lim_{k \uparrow \infty} (\indicator{D_k} \cdot X) = X$ in the $\S$--topology.
		\item\label{L:190622.1.iv} If $\lim_{k \uparrow \infty} X^{(k)} = X$ in the $\S$--topology for some semimartingale $X \in \S$ we have $$\lim_{k \uparrow \infty}  [X^{(k)}, X^{(k)}]  = [X, X] \qquad\text{and} \qquad
		\lim_{k \uparrow \infty}  [X^{(k)}, X^{(k)}]^c  = [X, X]^c$$ in the $\S$--topology.
		\item\label{L:190622.1.v} If $\lim_{k \uparrow \infty} X^{(k)} = X$ in the $\S$--topology for some semimartingale $X \in \S$ and if $X^{(k)}$ is predictable for each $k \in \N$ then $X$ has a predictable version.
		\item\label{L:190622.1.vi} Assume that the probability measure $\Qu$ is locally absolutely continuous with respect to $\P$. If $\lim_{k \uparrow \infty} X^{(k)} = X$ in the $\S$--topology for some semimartingale $X \in \S$ under $\P$ then also $\lim_{k \uparrow \infty} X^{(k)} = X$ in the $\S$--topology  under $\Qu$.
	\end{enumerate}
\end{lemma}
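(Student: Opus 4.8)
The plan is to dispose of the six assertions one at a time, using only the definition of the $\S$-topology in Definition~\ref{D:190622}, elementary stochastic calculus, and a couple of classical facts about Émery's topology. Note first that $X^{(k)}_0=0$ for all $k$ forces $X_0=0$, so the term $\zeta_0X_0$ in \eqref{eq:190731.2} never contributes and we may work with the quantities $\sup_{|\zeta|\le1}\E[\,|\zeta\cdot(X^{(k)}-X)_t|\wedge1\,]$. Assertions \ref{L:190622.1.ii} and \ref{L:190622.1.v} are then immediate. For \ref{L:190622.1.ii}, if $\zeta$ is predictable with $|\zeta|\le1$ then so is $\zeta\indicator D$, and $\zeta\cdot(\indicator D\cdot Y)=(\zeta\indicator D)\cdot Y$; hence the quantity in \eqref{eq:190731.2} associated with $\indicator D\cdot X^{(k)}$ and $\indicator D\cdot X$ is bounded by the one associated with $X^{(k)}$ and $X$, and the conclusion follows. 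For \ref{L:190622.1.v}, recall (as observed after Definition~\ref{D:190622}) that $\S$-convergence implies ucp convergence; passing to a subsequence that converges $\P$-a.s.\ uniformly on compacts, the $\limsup$ of that subsequence is a predictable process agreeing with $X$ outside an evanescent set, so $X$ has a predictable version.

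For \ref{L:190622.1.i}, let $(\tau_m)_{m\in\N}$ be stopping times increasing to $\infty$ with $(X^{(k)})^{\tau_m}\to X^{\tau_m}$ in $\S$ for each $m$. Fix $t\ge0$ and $\varepsilon>0$ and choose $m$ with $\P(\tau_m<t)\le\varepsilon$. On $\{\tau_m\ge t\}$ we have $\zeta\cdot X^{(k)}_t=\zeta\cdot(X^{(k)})^{\tau_m}_t$, and likewise for $X$, so for every predictable $\zeta$ with $|\zeta|\le1$,
\[
\E\!\left[\,\bigl|\zeta\cdot X^{(k)}_t-\zeta\cdot X_t\bigr|\wedge1\,\right]\le\E\!\left[\,\bigl|\zeta\cdot(X^{(k)})^{\tau_m}_t-\zeta\cdot X^{\tau_m}_t\bigr|\wedge1\,\right]+\P(\tau_m<t).
\]
Taking the supremum over $\zeta$ and then $\limsup_{k\to\infty}$ leaves a bound of $\varepsilon$, and since $t,\varepsilon$ were arbitrary the claim follows. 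For \ref{L:190622.1.iii}, since $\S$ is a topological vector space and $X=\indicator{D_k}\cdot X+\indicator{\complement{D_k}}\cdot X$, it suffices to show $\indicator{E_k}\cdot X\to0$ in $\S$, where $E_k=\complement{D_k}$ decreases to $\emptyset$ so that $\indicator{E_k}\to0$ pointwise. By \ref{L:190622.1.i} it is enough to prove this locally, so after localization we may assume $X=M+A$ with $M\in\Hsq$ and $A$ of bounded total variation (recall $X_0=0$). For the finite-variation part, $\sup_{|\zeta|\le1}|\zeta\cdot(\indicator{E_k}\cdot A)_t|\le\int_0^t\indicator{E_k}\,|\d A|\to0$ $\P$-a.s.\ by dominated convergence; for the martingale part, Doob's inequality and the Itô isometry give $\sup_{|\zeta|\le1}\E[\sup_{s\le t}|(\zeta\indicator{E_k})\cdot M_s|^2]\le4\,\E[\,\int_0^t\indicator{E_k}\,\d[M,M]\,]\to0$, whence the relevant quantities in \eqref{eq:190731.2} vanish in the limit. (Alternatively, \ref{L:190622.1.iii} is a special case of the dominated convergence theorem for the semimartingale topology.)

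Finally, for \ref{L:190622.1.iv} I would invoke the classical facts that the maps $X\mapsto[X,X]$ and $X\mapsto X^c$ (the continuous local martingale part) are continuous from $\S$ into $\S$ \citep{emery.79}, together with $[X,X]^c=[X^c,X^c]$. For \ref{L:190622.1.vi}, let $Z$ be the density process of $\Qu$ with respect to $\P$; since bounded predictable stochastic integrals are unaffected by the locally absolutely continuous change of measure \citep[see, e.g.,][]{js.03}, for every $t\ge0$, $N>0$, and predictable $\zeta$ with $|\zeta|\le1$,
\[
\E^{\Qu}\!\left[\,\bigl|\zeta\cdot(X^{(k)}-X)_t\bigr|\wedge1\,\right]=\E^{\P}\!\left[\,Z_t\bigl(\bigl|\zeta\cdot(X^{(k)}-X)_t\bigr|\wedge1\bigr)\,\right]\le\E^{\P}\!\left[\,Z_t\indicator{\{Z_t>N\}}\,\right]+N\,\E^{\P}\!\left[\,\bigl|\zeta\cdot(X^{(k)}-X)_t\bigr|\wedge1\,\right];
\]
taking the supremum over $\zeta$, then $\limsup_{k\to\infty}$, and then $N\to\infty$ (using $Z_t\in\Lp1(\P)$) yields the claim. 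I expect \ref{L:190622.1.iv} to be the only real obstacle: parts \ref{L:190622.1.i}, \ref{L:190622.1.ii}, \ref{L:190622.1.iii}, \ref{L:190622.1.v}, and \ref{L:190622.1.vi} are just an unwinding of Definition~\ref{D:190622}, whereas the continuity of the quadratic-variation and continuous-part operators in the $\S$-topology is a genuine result that I would import from the literature rather than reprove.
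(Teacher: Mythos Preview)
Your proof is correct and follows essentially the same approach as the paper. The paper disposes of \ref{L:190622.1.i} and \ref{L:190622.1.ii} by a one-line appeal to the definition, cites \citet{emery.79} for \ref{L:190622.1.iii} and \ref{L:190622.1.iv}, argues \ref{L:190622.1.v} via ucp convergence and a subsequence exactly as you do, and obtains \ref{L:190622.1.vi} by combining \citet[Proposition~6]{emery.79} with \ref{L:190622.1.i}; your treatment differs only in that you spell out \ref{L:190622.1.i}, \ref{L:190622.1.iii}, and \ref{L:190622.1.vi} by hand (the density-process splitting for \ref{L:190622.1.vi} is a clean self-contained alternative to the citation), while still importing the Émery continuity results for \ref{L:190622.1.iv}.
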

\begin{proof}
	First, \ref{L:190622.1.i} and \ref{L:190622.1.ii} follow from the definition of $\S$--topology and \ref{L:190622.1.iii} and \ref{L:190622.1.iv} are argued in \cite[Proposition~3 and Remarque~1 on p.~276]{emery.79}. To see \ref{L:190622.1.v}, recall that also $\lim_{k \uparrow \infty} X^{(k)} = X$ (in the sense of uniform convergence on compacts); hence also almost surely along a subsequence. In conjunction with Remark~\ref{R:190729} this yields the claim.
	Finally, \ref{L:190622.1.vi} is proved by applying \cite[Proposition~6]{emery.79} in conjunction with \ref{L:190622.1.i}.  
\end{proof}

Next, we consider sums of semimartingales and their convergence in  the $\S$--topology.

\begin{lemma}\label{L:190729}
Let $(X^{(k)})_{k \in \N} \subset \S$ denote a sequence of semimartingales with $X^{(k)}_0  = 0$. Then the following statements hold.
	\begin{enumerate}[label={\rm(\roman{*})}, ref={\rm(\roman{*})}] 
		\item\label{L:190729.i} If there exists $C>0$ such that $|\Delta X^{(k)}| \leq C$ 
		 for each $k\in \N$, and if 
		$
			\sum_{k  = 1}^\infty  [X^{(k)}, X^{(k)}]  < \infty,
		$
		then $\sum_{k  = 1}^\infty (X^{(k)} - B^{X^{(k)}})$ converges in the  $\S$--topology to a local martingale.
		\item\label{L:190729.ii} If $X^{(k)}$ has finite variation on compacts for each $k\in \N$ and if 
		$
			\sum_{k  = 1}^\infty \int_0^\cdot  |\d X^{(k)}|  < \infty,
		$
then $\sum_{k  = 1}^\infty X^{(k)}$ converges in the  $\S$--topology to a finite variation process.   
		\item\label{L:190729.iii}  Assume that $\sum_{k,l = 1}^\infty [ X^{(k)}, X^{(l)}]^- < \infty$. Then the following two statements are equivalent. 
			\begin{enumerate}[label={\rm(\Roman{*})}, ref={\rm(\Roman{*})}] 
			\item\label{L:190729.iii.I} $\sum_{k=1}^\infty [X^{(k)}, X^{(k)}]   < \infty$ and $\sum_{k  = 1}^\infty B^{X^{(k)}[1]}$ converges in the  $\S$--topology to a  process $B$. 
			\item\label{L:190729.iii.II} $\sum_{k=1}^\infty X^{(k)}$ converges in the $\S$--topology to a process $X$.
			\end{enumerate}
			If one (hence both) of these conditions hold then $B^{X[1]} = B$.  If additionally $[X^{(k)}, X^{(l)}] = 0$ for all $k,l \in \N$ with $k \neq l$ then  we also have $\sum_{k = 1}^\infty \Delta X^{(k)} = \Delta X$. 
	\end{enumerate}
\end{lemma}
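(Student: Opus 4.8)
The plan is to prove Lemma~\ref{L:190729} in the order of its three parts, exploiting the \'Emery topology only through its basic properties and the completeness of $\S$ under $d_\S$, together with the Burkholder-Davis-Gundy (BDG) and Kunita-Watanabe inequalities.

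For part~\ref{L:190729.i}, I would work with the partial sums $S^{(m)} = \sum_{k=1}^m (X^{(k)} - B^{X^{(k)}})$, each of which is a local martingale. The strategy is to show that $(S^{(m)})_{m \in \N}$ is Cauchy in $d_\S$. Fix $t \geq 0$ and a predictable $\zeta$ with $|\zeta| \leq 1$; then $\zeta \cdot (S^{(m)} - S^{(m')})$ is a local martingale with jumps bounded by $2C$ (using $|\Delta X^{(k)}| \leq C$ and the fact that predictable projections of bounded jumps stay bounded, so $|\Delta B^{X^{(k)}}| \leq C$ hence $|\Delta(X^{(k)} - B^{X^{(k)}})| \le 2C$), and its quadratic variation equals $\zeta^2 \cdot \sum_{k=m'+1}^m [X^{(k)} - B^{X^{(k)}}, X^{(k)} - B^{X^{(k)}}]$. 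Since $[X^{(k)} - B^{X^{(k)}}, X^{(k)} - B^{X^{(k)}}]^{1/2} \le [X^{(k)},X^{(k)}]^{1/2} + (\text{var of } B^{X^{(k)}})$, and bounded-jump special semimartingales have $\mathrm{Var}(B^{X^{(k)}}) \le c\,\E[[X^{(k)},X^{(k)}]^{1/2}]$-type control, the hypothesis $\sum_k [X^{(k)},X^{(k)}] < \infty$ plus BDG (localizing first, then using that jumps are bounded to pass from $L^1$ of the local time to $L^1$ of the running maximum via a truncation/stopping argument) gives a tail estimate uniform in $\zeta$. Completeness of $(\S, d_\S)$ then produces the limit, which is a local martingale because a $d_\S$-limit of local martingales is a local martingale (e.g. by Lemma~\ref{L:190622.1}\ref{L:190622.1.iv} the bracket converges, or directly since the local martingale property is $d_\S$-closed after stopping).

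Part~\ref{L:190729.ii} is the easy one: the partial sums $\sum_{k=1}^m X^{(k)}$ are finite variation processes, and $d_\S$-convergence is dominated by convergence in total variation — indeed $\sup_{|\zeta| \le 1} |\zeta \cdot X^{(k)}_t| \le \int_0^t |\d X^{(k)}|$, so the hypothesis $\sum_k \int_0^\cdot |\d X^{(k)}| < \infty$ makes the partial sums Cauchy in $d_\S$, and the limit has finite variation because the total variation processes converge monotonically and the limiting process is dominated by $\sum_k \int_0^\cdot |\d X^{(k)}|$.

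Part~\ref{L:190729.iii} is the crux and the main obstacle, since it must disentangle the martingale and drift contributions under only the one-sided bracket-summability assumption $\sum_{k,l}[X^{(k)},X^{(l)}]^- < \infty$. Here I would pass to the special semimartingales $X^{(k)}[1] = X^{(k)} - x\indicator{|x|>1}*\mu^{X^{(k)}}$; the large-jump parts $\sum_k x\indicator{|x|>1}*\mu^{X^{(k)}}$ are handled by part~\ref{L:190729.ii}, since $\sum_{k,l}[X^{(k)},X^{(l)}]^- < \infty$ controls the number of large jumps. For the bounded-jump pieces write $X^{(k)}[1] = M^{(k)} + B^{X^{(k)}[1]}$ with $M^{(k)} = X^{(k)}[1] - B^{X^{(k)}[1]}$ a local martingale with jumps bounded by, say, $2$. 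For \ref{L:190729.iii.I}$\Rightarrow$\ref{L:190729.iii.II}: $\sum_k [X^{(k)},X^{(k)}] < \infty$ together with the one-sided cross-bracket bound forces $\sum_k [M^{(k)}, M^{(k)}] < \infty$ (the diagonal bracket of the sum is $\sum_{k,l}[M^{(k)},M^{(l)}]$, which by Kunita-Watanabe and the hypotheses is dominated), so part~\ref{L:190729.i} applies to $\sum_k M^{(k)}$, and adding the assumed $d_\S$-convergent drift series gives convergence of $\sum_k X^{(k)}[1]$, hence of $\sum_k X^{(k)}$; the bracket limit identifies $[X[1],X[1]]$ and taking drifts gives $B^{X[1]} = B$. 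For \ref{L:190729.iii.II}$\Rightarrow$\ref{L:190729.iii.I}: if $\sum_k X^{(k)} = X$ converges in $d_\S$, then by Lemma~\ref{L:190622.1}\ref{L:190622.1.iv} the brackets converge, $[X^{(n_1)} + \cdots + X^{(n_m)}, \cdot] \to [X,X]$, and combined with $\sum_{k,l}[X^{(k)},X^{(l)}]^- < \infty$ this yields $\sum_k [X^{(k)},X^{(k)}] < \infty$ (the negative parts being summable, the sum of diagonal terms is bounded by $[X,X]$ plus the absolutely convergent off-diagonal negative mass); subtracting the now-$d_\S$-convergent martingale part $\sum_k M^{(k)}$ (from part~\ref{L:190729.i}) from $\sum_k X^{(k)}[1]$ shows $\sum_k B^{X^{(k)}[1]}$ converges in $d_\S$ to $B^{X[1]}$. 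Finally, under the orthogonality $[X^{(k)},X^{(l)}] = 0$ for $k \neq l$, the jump relation $\sum_k \Delta X^{(k)} = \Delta X$ follows because $\Delta(\sum_{k \le m} X^{(k)}) \to \Delta X$ in the ucp sense (a consequence of $d_\S$-convergence) while orthogonality makes the jumps of distinct summands occur at disjoint times, so the partial-sum jumps stabilize pointwise. The delicate point throughout will be the passage, for local martingales with bounded jumps, from $L^1$-control of the quadratic variation to the $d_\S$-estimate uniformly over all test integrands $\zeta$ with $|\zeta|\le 1$ — this is where BDG, a stopping-time truncation to reduce to the genuinely integrable case, and the bound on jumps must be combined carefully.
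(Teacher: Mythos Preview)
Your outline for part~\ref{L:190729.ii} and the overall architecture of~\ref{L:190729.iii} are essentially correct and match the paper. The genuine gap is in part~\ref{L:190729.i}, and it propagates into your treatment of~\ref{L:190729.iii}.

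In~\ref{L:190729.i} you attempt to control $[M^{(k)},M^{(k)}]$, where $M^{(k)}=X^{(k)}-B^{X^{(k)}}$, via the triangle inequality $[M^{(k)},M^{(k)}]^{1/2}\le [X^{(k)},X^{(k)}]^{1/2}+\int_0^\cdot|\d B^{X^{(k)}}|$ followed by an estimate of the form ``$\int_0^\cdot|\d B^{X^{(k)}}|\le c\,\E\bigl[[X^{(k)},X^{(k)}]^{1/2}\bigr]$--type control''. That second bound is false in general: take $X^{(k)}$ to be any continuous deterministic finite-variation process; then $[X^{(k)},X^{(k)}]=0$ while $B^{X^{(k)}}=X^{(k)}$ can have arbitrarily large total variation. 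There is simply no control of the compensator's variation by the quadratic variation of a bounded-jump special semimartingale.

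The paper sidesteps this entirely. After localizing so that $\sum_k[X^{(k)},X^{(k)}]\le\kappa$, it uses the key observation that $[M^{(k)},B^{X^{(k)}}]$ is a local martingale: since $B^{X^{(k)}}$ is predictable of finite variation and $M^{(k)}$ is a local martingale, $[M^{(k)},B^{X^{(k)}}]=\sum_{s\le\cdot}\Delta M^{(k)}_s\Delta B^{X^{(k)}}_s$ has zero predictable compensator. From
\[
[X^{(k)},X^{(k)}]=[M^{(k)},M^{(k)}]+2[M^{(k)},B^{X^{(k)}}]+[B^{X^{(k)}},B^{X^{(k)}}]
\]
one obtains, after stopping so that the cross term is a uniformly integrable martingale and taking expectations, $\E\bigl[[M^{(k)},M^{(k)}]_\infty\bigr]\le\E\bigl[[X^{(k)},X^{(k)}]_\infty\bigr]$. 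Summing gives $\sum_k\E\bigl[[M^{(k)},M^{(k)}]_\infty\bigr]\le\kappa$, so $\sum_kM^{(k)}$ converges in (local) $\Hsq$ and hence in the $\S$--topology by \'Emery's Theorem~2. This is the idea your argument is missing.

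Once~\ref{L:190729.i} is established this way, note that for \ref{L:190729.iii}\ref{L:190729.iii.I}$\Rightarrow$\ref{L:190729.iii.II} you do not need a separate argument that $\sum_k[M^{(k)},M^{(k)}]<\infty$: simply apply~\ref{L:190729.i} to the sequence $(X^{(k)}[1])_{k\in\N}$, which has $|\Delta X^{(k)}[1]|\le1$ and $[X^{(k)}[1],X^{(k)}[1]]\le[X^{(k)},X^{(k)}]$. Part~\ref{L:190729.i} then directly yields convergence of $\sum_k\bigl(X^{(k)}[1]-B^{X^{(k)}[1]}\bigr)$; adding the assumed drift limit and the large-jump sum (via~\ref{L:190729.ii}, using $|x|\indicator{|x|>1}\le x^2\indicator{|x|>1}$) finishes the implication. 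Your Kunita--Watanabe detour is unnecessary and, as written, does not actually deliver the bound you assert.
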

\begin{proof}
	We first argue  \ref{L:190729.i}. By localization and by Lemma~\ref{L:190622.1}\ref{L:190622.1.i} we may  assume that there is a constant $\kappa \geq 0$ such that $\sum_{k=1}^\infty [X^{(k)}, X^{(k)}]\leq \kappa$. 
Next, fix for the moment $k \in \N$ and define the local martingale
$M^{(k)} = X^{(k)} - B^{ {X^{(k)}}}$.  Let $(\tau_m)_{m \in \N}$ be a nondecreasing sequence of stopping times such that $[M^{(k)}, B^{ {X^{(k)}}}]^{\tau_m}$ is a uniformly integrable martingale for each  $m \in \N$. Then we have
	\begin{align*}
		 \E\left[\left[M^{(k)}, M^{(k)}\right]_\infty\right] &{}= \lim_{m \uparrow \infty}
		 	\E\left[\left[M^{(k)}, M^{(k)}\right]_{\tau_m}\right]  \\
			&{}\leq  \lim_{m \uparrow \infty}
		 	\E\left[\left[X^{(k)}, X^{(k)}\right]_{\tau_m}\right]  = \E\left[\left[X^{(k)}, X^{(k)}\right]_{\infty}\right] .
	\end{align*}
	The Burkholder-Davis-Gundy inequality  
	now yields a  constant $\kappa' >0$ such that  
	\begin{align*}
		\sum_{k = 1}^\infty \E\left[\left(M^{(k)}_\infty\right)^2\right] &\leq \kappa' \sum_{k = 1}^\infty \E\left[\left[M^{(k)}, M^{(k)}\right]_\infty\right] 
		\leq  \kappa'\sum_{k = 1}^\infty \E\left[\left[X^{(k)}, X^{(k)}\right]_\infty\right]  \leq  \kappa' \kappa.
	\end{align*}
	Hence, Doob's inequality yields that $\sum_{k = 1}^\infty M^{(k)}$ converges locally in $\Hcal_{2}$ to a martingale; see, for example, 
	Kunita and Watanabe \cite[Proposition~4.1]{kunita.watanabe.67}
	or Dol\'eans-Dade and Meyer \cite[Lemme~1]{doleans-dade.meyer.70}. Hence by \cite[Theoreme~2]{emery.79}, \ref{L:190729.i} follows. 
	
	Let us now argue \ref{L:190729.ii}. First, $\sum_{k  = 1}^\infty X^{(k)}$  converges  to a finite variation process $X$ in the sense of uniform convergence on compacts in probability. Next,  note that 
	\[
		\zeta \cdot \sum_{k  = 1}^n X^{(k)} - \zeta \cdot X  \leq  \sum_{k = n+1}^\infty \int_0^\cdot \left|\d X^{(k)}\right|
	\]
	for all predictable processes $\zeta$ with $|\zeta| \leq 1$.  Hence,  \ref{L:190729.ii} follows.
	  
	 To see the implication from \ref{L:190729.iii.I} to \ref{L:190729.iii.II} in \ref{L:190729.iii}, apply \ref{L:190729.i} to the sequence $(X^{(k)}[1])_{k \in \N}$ and \ref{L:190729.ii} to $(x \indicator{|x| > 1} * \mu^{X^{(k)}})_{k \in \N}$.
	 For the reverse direction \ref{L:190729.iii.II} to \ref{L:190729.iii.I} note that since $X$ is a semimartingale, the assumption and  Lemma~\ref{L:190622.1}\ref{L:190622.1.iv} yield directly that $\sum_{k=1}^\infty [X^{(k)}, X^{(k)}]   < \infty$. Moreover, as above, the sums corresponding to $(X^{(k)}[1] -  B^{X^{(k)}[1]})_{k \in \N}$ and  $(x \indicator{|x| > 1} * \mu^{X^{(k)}})_{k \in \N}$ converge in the $\S$--topology; hence so must the sums corresponding to $ (B^{X^{(k)}[1]})_{k \in \N}$. Finally, if  \ref{L:190729.iii.I} and \ref{L:190729.iii.II} hold then 
	 \[
	 	X[1] =  \sum_{k  = 1}^\infty \left(X^{(k)}[1] - B^{X^{(k)}[1]}\right) +  B
	 \]
	 in the $\S$--topology,  where the first term is a local martingale by \ref{L:190729.i} and $B$ may be assumed to be predictable and of finite variation thanks to Lemma~\ref{L:190622.1}\ref{L:190622.1.v}\&\ref{L:190622.1.iv}. 
	 
	 Let us  additionally assume that  $[X^{(k)}, X^{(l)}] = 0$ for all $k,l \in \N$ with $k \neq l$.  
	 Then the sum $\sum_{k = 1}^\infty \Delta X^{(k)}$ is well defined since at most one summand is nonzero, $(\P \times \d A^X)$--a.e. By Lemma~\ref{L:190622.1}\ref{L:190622.1.iv} and the fact that  $\sum_{k=1}^\infty X^{(k)}$ converges to $X$ also in the sense of uniform convergence on compacts in probability  we may conclude.
	 \end{proof}
	
	\section{Replacing the \texorpdfstring{$\S$}{S}--topology by UCP convergence} \label{S:6}
Here we briefly discuss the choice of the $\S$--topology in the definition of $\J^2$.  Indeed, one may define an alternative class $\J^{2\dag} \subset \S$, where the convergence in the $\S$--topology is replaced by uniform convergence on compacts in probability (ucp). That is, a semimartingale $X \in \S$ is in $\J^{2\dag}$ if for a family $(\tau_k)_{k \in \N}$ of stopping times we have
\[
	\lim_{n \uparrow \infty} \E\left[\sup_{s \leq t} \left|X_s - X_0 - \sum_{k = 1}^n \Delta X_{\tau_k} \indicator{\lc \tau_k, \infty\lc}(s) \right| \wedge 1 \right] = 0
\]
for all $t \geq 0$.

Note that the equivalence of \ref{L:190622.2.I} and \ref{L:190622.2.II} in Lemma~\ref{L:190622.2} holds 
with convergence in $\S$--topology replaced by convergence in the sense of ucp in its statement. 
However, $\J^{2\dag}$ is not stable under $\sigma$--stopping, i.e., if $X \in \J^{2\dag}$ and $D$ is a predictable set then not necessarily $\indicator{D} \cdot X \in \J^{2\dag}$.

The new class  $\J^{2\dag}$ contains all elements of $\J^2$ because the semimartingale topology is stronger than ucp convergence. Proposition~\ref{P:190828} below shows $\J^{2\dag}$ is in fact too large for practical purposes or for representing `pure-jump' processes. 

\begin{proposition} \label{P:190828}
	Let $X$ denote a L\'evy process with $|\Delta X| \leq 1$, $\int x^+ F^X(\d x) =  \infty$, and symmetric and atomless L\'evy measure. Moreover, let $W$ denote an independent Brownian motion stopped when its absolute value hits $1$. Then $X + W \in \J^{2\dag}$; hence, in particular $\J^{2\dag} \setminus \J^1 \neq \emptyset$ for sufficiently rich probability spaces. 
\end{proposition}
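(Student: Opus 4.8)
Since $W$ is continuous, the jumps of $X+W$ are precisely those of $X$; hence any family $(\tau_k)_{k\in\N}$ witnessing $X+W\in\J^{2\dag}$ must enumerate the jump times of $X$, and the only remaining freedom is the \emph{order} of enumeration. The plan is to produce one such order whose partial sums converge to $X+W$ in ucp. The mechanism is a Riemann--rearrangement effect: because $F^X$ is symmetric and $\int x^+F^X(\d x)=\infty$, the very jumps that reconstruct $X$ (taken from largest to smallest) can be re-shuffled so that the partial sums additionally carry the continuous path $W$, and ucp convergence --- in sharp contrast to the $\S$--topology --- is insensitive to such conditionally convergent re-ordering.

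First I would assemble the tools. Because $|\Delta X|\le1$ and $F^X$ is symmetric, $X$ has neither drift nor Brownian part, so $X=x\indicator{|x|\le1}*(\mu^X-\nu^X)$; symmetry also forces $x\indicator{\varepsilon<|x|\le1}*\nu^X=0$, so $X^\varepsilon:=x\indicator{\varepsilon<|x|\le1}*\mu^X$ --- the sum of the jumps of $X$ of absolute size exceeding $\varepsilon$ --- is a finite variation process, and $X^\varepsilon\to X$ in the $\S$--topology, hence in ucp, as $\varepsilon\downarrow0$, since $x^2\indicator{|x|\le\varepsilon}*\nu^X\to0$. From $\int x^+F^X(\d x)=\infty$ and $\int_{\varepsilon<|x|\le1}|x|F^X(\d x)<\infty$ one gets $\int_{0<x\le\varepsilon}xF^X(\d x)=\infty$ and, by symmetry, the analogous statement on the negative side; hence, almost surely, in every nonempty open time interval the jumps of $X$ with value in $(0,\varepsilon]$ have infinite sum and those with value in $[-\varepsilon,0)$ have infinite absolute sum, for every $\varepsilon>0$. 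I would also use a controlled rearrangement lemma: a real sequence $(a_k)$ with $\sum_k a_k^+=\sum_k a_k^-=\infty$ and $\sup_k|a_k|\le\eta$ can be re-ordered to converge to any prescribed limit $L$ with all partial sums confined to $[(L\wedge0)-\eta,\,(L\vee0)+\eta]$; applied to the small jumps of $X$ on a short time interval, this synthesises a prescribed increment with uniform control.

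Then I would build the enumeration in rounds indexed by size bands $(\varepsilon_r,\varepsilon_{r-1}]$ with $\varepsilon_r\downarrow0$ fast enough that $\int_{|x|\le\varepsilon_r}x^2F^X(\d x)$ is summably small, horizons $T_r\uparrow\infty$, tolerances $\rho_r\downarrow0$, and time grids of $[0,T_r]$ of vanishing mesh. Round $r$ releases into the partial sum (in time order) essentially all band-$r$ jumps in $[0,T_r]$ together with previously deferred jumps, and defers a fresh, sign-balanced finite batch of very small jumps; the deferrals are driven, interval by interval, by the controlled rearrangement lemma, so that the cumulative sum of the currently deferred jumps tracks $-W$ on $[0,T_r]$. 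Since the released jumps reconstruct $X^{\varepsilon_r}\approx X$, the partial sum after round $r$ equals $X^{\varepsilon_r}$ minus the deferred sum, hence is uniformly within $\rho_r$ of $X+W$ on $[0,T_r]$; a Doob estimate controls the martingale part of each release and keeps the partial sums within $\rho_{r-1}$ of $X+W$ on $[0,T_{r-1}]$ throughout round $r$. Every jump of $X$ is deferred for only boundedly many rounds and is released exactly once, so the enumeration uses each jump of $X$ precisely once and the partial sums converge to $X+W$ in ucp; this gives $X+W\in\J^{2\dag}$. Finally $[X+W,X+W]^c=[W,W]\neq0$, since the stopped Brownian motion $W$ does not reach $\pm1$ instantaneously, so $X+W\notin\J^1$; and a product space carrying an independent Brownian motion and an independent L\'evy process with, say, $F^X(\d x)=\indicator{0<|x|\le1}|x|^{-2}\,\d x$ realises such a pair, whence $\J^{2\dag}\setminus\J^1\neq\emptyset$ on sufficiently rich probability spaces.

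The step I expect to be the main obstacle is precisely this bookkeeping: keeping the pool of not-yet-released small jumps of each sign from being exhausted, ensuring that every deferral is eventually released, and keeping the cumulative deferred sum uniformly close to $-W$, all at once. This forces a careful coupling of the rates at which the size bands shrink, the horizons grow, and the grids refine --- a single size band does not supply enough mass to absorb the square-root-of-mesh fluctuations of $W$, so each deferral episode must draw on a growing number of bands. A secondary point is that the description above decides deferrals by inspecting $W$ along a grid, which is not adapted; turning it into a rule that at time $t$ uses only the information available by time $t$ (so that the $\tau_k$ are genuine stopping times exhausting the jumps of $X$) requires a greedy, queueing reformulation together with Lemma~\ref{L:190525.1}. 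None of this can be sidestepped by working in the $\S$--topology: there $X+W\notin\J^2$ for the trivial reason that $\J^2\subset\J^1$, so the argument must genuinely exploit the looseness of ucp convergence.
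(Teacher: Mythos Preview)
Your plan is in the right spirit—the continuous part $W$ is to be manufactured from an asymmetric selection of small positive versus small negative jumps of $X$—but the implementation you sketch (round-by-round release and deferral of individual jumps, with a Riemann-rearrangement flavour) is considerably heavier than what the paper does, and the two obstacles you flag (bookkeeping of the deferral pool, and adaptedness of the $\tau_k$) are real and left unresolved in your outline.

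The paper sidesteps both obstacles by working at the level of truncation \emph{thresholds} rather than individual jumps. For each $n$ it first forms a \emph{trailing} piecewise-linear predictable approximation $B^{(n)}$ of $W$: on $[k/n,(k+1)/n)$ the slope $b^{(n)}$ is determined by $W_{(k-1)/n}$, so $B^{(n)}$ is predictable by construction and $B^{(n)}\to W$ in ucp. It then chooses nonincreasing piecewise-constant predictable processes $f^{(n)},g^{(n)}\in(0,1/n]$ so that the asymmetrically truncated process $x\indicator{\{x\notin(-g^{(n)},f^{(n)})\}}*\mu^X$ has compensator exactly $B^{(n)}$; atomlessness of $F^X$ lets the threshold be tuned continuously, and $\int x^+F^X(\d x)=\infty$ with symmetry guarantees any prescribed slope is attainable. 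The increments between successive truncation levels are finite-variation pure-jump processes with pairwise zero covariation, so a ucp version of Lemma~\ref{L:190622.2} applies directly: the compensated parts sum to $X$ and the drifts sum in ucp to $W$. In this scheme the stopping times are simply the jump times of each truncation layer, automatically adapted because the thresholds are predictable—no rearrangement, no deferral pool, no greedy queueing reformulation. Your route could probably be pushed through, but the predictable-threshold device is the idea that makes the paper's proof a paragraph rather than a project.
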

\begin{proof}
	Fix for the moment $n \in \N$ and let $W^{(n)}$ denote a piecewise constant approximation of $W$ with $W^{(n)}_{k/n + t} = W_{k/n}$ for all $k \in \N$ and $t \in [0, 1/n)$.  Next,  let $B^{(n)} = \int_0^\cdot b^{(n)}_t \d t$ denote the trailing continuous piecewise linear predictable approximation of $W^{(n)}$.  By this we mean the  process $B^{(n)}$ such that $B^{(n)}_0 = B^{(n)}_{1/n} = 0$, $B^{(n)}_{2/n} = W^{(n)}_{1/n}$,  $B^{(n)}_{3/n} = W^{(n)}_{2/n}, \cdots$ and $b^{(n)}$ is  constant on each interval $[k/n, (k+1)/n)$ for $k \in \N$. Then it is clear that $\lim_{n \uparrow \infty} B^{(n)} = W$ in the sense of ucp.
	
	We now claim that there exist two nonincreasing sequences $(c^{(n)})_{n \in \N}$ and $(d^{(n)})_{n \in \N}$ of piecewise constant predictable processes with $c^{(n)}, d^{(n)} \in (0, 1/n]$ such that 
	$$\int |x| \indicator{\{x \notin (-g^{(n)}, f^{(n)})\}} F^X(\d x) < \infty
	\text{\quad and\quad}  
	x \indicator{\{x \notin (-g^{(n)}, f^{(n)})\}} * \nu^X = B^{(n)}.$$ 
	Then the statement follows by using the appropriate modification of Lemma~\ref{L:190622.2}.
	
To see the claim assume one has constructed $g^{(n)}$ and $f^{(n)}$ for some $n \in \N$ as required.  Consider now the intermediate predictable processes
$\overline{g} = g^{(n)} \wedge 1/(n+1)$ and $\overline{f} = f^{(n)} \wedge 1/(n+1)$ and the intermediate piecewise constant predictable process 
\[
	\overline b = b^{(n+1)} - \int x \indicator{\{x \notin (-\overline{g}, \overline{f})\}} F^X(\d x) .
\]
Whenever $\overline b > 0$, one now sets $g^{(n+1)} = \overline{g}$ and sets $f^{(n+1)}$ so that $\int x \indicator{\{x \in (f^{(n+1)}, \overline{f})\}} F^X(\d x) = \overline b$. When $\overline b < 0$ one sets  $g^{(n+1)}$  and $f^{(n+1)}$ in the opposite way. This construction satisfies the requirements, hence concluding the proof. 
\end{proof}

\section*{Acknowledgements}
The authors would like to thank an anonymous referee for critical comments that have lead to several improvements in the paper.

\end{document}